\colorlet{darkblue}{blue!50!black}
\newtheorem{theorem}{Theorem}[section]
\newtheorem{lemma}[theorem]{Lemma}
\newtheorem{proposition}[theorem]{Proposition}
\newtheorem{assumption}[theorem]{Assumption}
\newtheorem{corollary}[theorem]{Corollary}
\newtheorem{definition}[theorem]{Definition}
\newtheorem{remark}[theorem]{Remark}
\let\originalleft\left
\let\originalright\right
\renewcommand{\left}{\mathopen{}\mathclose\bgroup\originalleft}
\renewcommand{\right}{\aftergroup\egroup\originalright}
\renewcommand{\d}{\/\mathrm{d}\/}
\def\w{\textbf{W}^{\varepsilon}_{{\theta}^{\varepsilon}}}
\def\L{\mathbb{L}}
\def\A{\mathrm{A}}
\def\I{\mathrm{I}}
\def\C{\mathrm{C}}
\def\f{\boldsymbol{f}}
\def\J{\mathrm{J}}
\def\B{\mathrm{B}}
\def\D{\mathrm{D}}
\def\y{\boldsymbol{y}}
\def\E{\mathbb{E}}
\def\x{\boldsymbol{x}}
\def\p{\boldsymbol{p}}
\def\h{\boldsymbol{h}}
\def\z{\Upsilon}
\def\v{\boldsymbol{v}}
\def\V{\mathbb{v}}
\def\w{\boldsymbol{w}}
\def\W{\mathrm{W}}
\def\G{\mathrm{G}}
\def\N{\mathbb{N}}
\def\r{\mathrm{r}}
\def\V{\mathbb{V}}
\def\wi{\widetilde}
\def\u{\mathrm{U}}
\def\P{\mathrm{P}}
\def\u{\boldsymbol{u}}
\def\H{\mathbb{H}}
\newcommand{\R}{\mathbb{R}}
\renewcommand{\d}{\/\mathrm{d}\/}
\newcommand{\Addresses}{{
		\footnote{
			
			\noindent \textsuperscript{1,2}Department of Mathematics, Indian Institute of Technology Roorkee-IIT Roorkee,
			Haridwar Highway, Roorkee, Uttarakhand 247667, INDIA.\par\nopagebreak
			\noindent  \textit{e-mail:} \texttt{Manil T. Mohan: maniltmohan@ma.iitr.ac.in, maniltmohan@gmail.com.}
			
			\textit{e-mail:} \texttt{Kush Kinra: kkinra@ma.iitr.ac.in.}
			
			\noindent \textsuperscript{*}Corresponding author.
			
			\textit{Key words:} Stochastic convective Brinkman-Forchheimer equations, unbounded domains, cylindrical Wiener process,  absorbing sets, asymptotically compactness, random attractors.
			
			Mathematics Subject Classification (2020): Primary 35B41, 35Q35; Secondary 37L55, 37N10, 35R60.

}}}
\begin{document}
	
	\title[Random attractors for SCBF equations]{Random attractors for 2D and 3D stochastic convective Brinkman-Forchheimer equations in some unbounded domains
		\Addresses}
	\author[K. Kinra and M. T. Mohan]
	{Kush Kinra\textsuperscript{1} and Manil T. Mohan\textsuperscript{2*}}

	\maketitle
	
	\begin{abstract}
		In this work, we consider the two and three-dimensional stochastic convective Brinkman-Forchheimer (2D and 3D SCBF) equations driven by irregular additive white noise $$\d\u-[\mu \Delta\u-(\u\cdot\nabla)\u-\alpha\u-\beta|\u|^{r-1}\u-\nabla p]\d t=\boldsymbol{f}\d t+\d\W,\ \nabla\cdot\u=0,$$ for $r\in[1,\infty),$ $\mu,\alpha,\beta>0$ in unbounded domains (like Poincar\'e domains) $\mathcal{O}\subset\R^d$ ($d=2,3$) where $\W(\cdot)$ is a Hilbert space valued Wiener process on some given filtered probability space, and discuss the asymptotic behavior of its solution. For $d=2$ with $r\in[1,\infty)$ and $d=3$ with $r\in[3,\infty)$ (for $d=r=3$ with $2\beta\mu\geq 1$), we first prove the existence and uniqueness of a weak  solution (in the analytic sense) satisfying the energy equality for SCBF equations driven by an irregular additive white noise in Poincar\'e domains by using a Faedo-Galerkin approximation technique. Since the energy equality for SCBF equations is not immediate, we construct a sequence which converges in Lebesgue  and Sobolev spaces simultaneously and it helps us to demonstrate the energy equality. Then, we establish the existence of random attractors for the stochastic flow generated by the SCBF equations. One of the technical difficulties connected with the irregular white noise is overcome with the help of the corresponding Cameron-Martin space (or Reproducing Kernel Hilbert space). Furthermore, we observe that the regularity of the irregular white noise needed to obtain random attractors for the SCBF equations for $d=2$ with $r\in[1,3]$ and $d=r=3$ with $2\beta\mu\geq1$, is the same as that in the case of 2D Navier-Stokes equations, whereas for the cases $d=2,3$ and $r\in(3,\infty)$, we require more spatial regularity on the noise. Finally, we address the existence of a unique invariant measure for 2D and 3D SCBF equations defined on Poincar\'e domains (bounded or unbounded). Moreover, we provide a remark on the extension of the above mentioned results  to general unbounded domains also. 
	\end{abstract}

	\section{Introduction} \label{sec1}\setcounter{equation}{0}
			\subsection{Literature survey and motivations} 
			Analysis of the asymptotic behavior of dynamical systems is one of the most significant and far-reaching areas  of mechanics and mathematical physics. As far as the theory of deterministic infinite dimensional dynamical systems are concerned, the concept of attractors occupies a central position (cf. \cite{R.Temam}). In the study of dynamics of  stochastic partial differential equations (SPDEs), an elementary problem is to establish that it generates a random dynamical system (RDS) or stochastic flow. It is well-known in the literature that a large class of PDEs with stationary random coefficients and It\^o stochastic ordinary differential equations generate random dynamical systems (cf. \cite{Arnold,PEK}).  The analysis of infinite dimensional RDS is also an essential branch in the study of qualitative properties of SPDEs (cf. \cite{BCF,CF,Crauel}, etc. for more details). In this work, we consider the random dynamics of convective Brinkman-Forchheimer (CBF) equations, which  describe the motion of incompressible fluid flows in a saturated porous medium.  In mathematical point of view, CBF model can also be considered as damped Navier-Stokes equations (NSE). 	Our plan is to discuss the long time behavior of the solutions of two and three-dimensional stochastic convective Brinkman-Forchheimer (SCBF) equations driven by irregular additive white noise.
	
		We consider the following CBF equations in $\mathcal{O}$ (satisfying Assumption \ref{assumpO} given below) with homogeneous Dirichlet boundary conditions:
		\begin{equation}\label{1}
			\left\{
			\begin{aligned}
			\frac{\partial \u}{\partial t}-\mu \Delta\u+(\u\cdot\nabla)\u+\alpha\u+\beta|\u|^{r-1}\u+\nabla \p&=\boldsymbol{f}, \ \text{ in } \ \mathcal{O}\times(0,\infty), \\ \nabla\cdot\u&=0, \ \text{ in } \ \mathcal{O}\times(0,\infty), \\
			\u&=\mathbf{0},\ \text{ on } \ \partial\mathcal{O}\times[0,\infty), \\
			\u(0)&=\u_0 \ \text{ in } \ \mathcal{O},\\
			\int_{\mathcal{O}}p(x,t)\d x&=0, \ \text{ in } \ (0,\infty).
			\end{aligned}
			\right.
		\end{equation}
	Here $\u(x,t) \in \R^d$, $p(x,t)\in\R$ and $\f(x,t)\in\R^d$ represent the velocity field at position $x$ and time $t$, the pressure field and an external forcing, respectively. The final condition in \eqref{1} is imposed for the uniqueness of the pressure $p$. The constant $\mu>0$ represents the Brinkman coefficient (effective viscosity), the positive constants $\alpha$ and $\beta$ stand for the Darcy (permeability of porous medium) and Forchheimer (proportional to the porosity of the material) coefficients, respectively. The exponent $r\in[1,\infty)$ is called the absorption exponent. For $\alpha=\beta=0$, we obtain the classical $d$-dimensional NSE. For the unique solvability of the deterministic system \eqref{1} on bounded domains,  the interested readers are referred to see \cite{AO,FHR,HR,Mohan1}, etc., and for its stochastic counterpart, see \cite{Mohan}. The asymptotic analysis of the deterministic system \eqref{1} (for $d=2$) in general unbounded domains is addressed in the works \cite{Mohan2,MTM2}, etc. The random dynamics for 2D and 3D SCBF equations driven by Hilbert space valued additive white noise on bounded or periodic domains are discussed in \cite{KM1,KM3}. The random dynamics for 2D and 3D SCBF equations driven by finite dimensional additive/multiplicative white noise on whole space is discussed in \cite{KM6,KM7}. 
	
	To the best of our knowledge, there are no results available in the literature on the existence and uniqueness of solutions as well as the existence of random attractors for 2D and 3D SCBF equations driven by irregular (rough) Hilbert space valued additive white noise in unbounded domains. Moreover, the results on the existence and uniqueness of invariant measures for 2D and 3D SCBF model  on unbounded domains are also new. Likewise 3D NSE, the global existence and uniqueness of  strong solutions for the equations \eqref{1} (for $d=3$) with $r\in[1,3)$ and $r=3$ (when $2\beta\mu<1$) is still an open problem. {Therefore, there are three distinct cases to be considered and we summarize them as follows (see Table \ref{Table1} below):}
	\begin{table}[ht]
		{	\begin{tabular}{|c|c|c|c|c|}
				\hline
				\textbf{Cases}& $d$ &$ r$& conditions on $\mu$ \& $\beta$ \\
				\hline
				\textbf{I}& $d=2$ &$r\in[1,\infty)$&  for any   $\mu>0$ and $\beta>0$  \\
				\hline
				\textbf{II}& $d=3$ &$r\in(3,\infty)$& for any $\mu>0$ and $\beta>0$ \\
				\hline
				\textbf{III}& $d=3$ &$r=3$&for $\mu>0$ and $\beta>0$ with $2\beta\mu\geq1$ \\
				\hline
			\end{tabular}
			\vskip 0.1 cm
			\caption{Values of $\mu,\beta$ and $r$ for $d=2,3$.}
			\label{Table1}}
	\end{table}

	\begin{assumption}\label{assumpO}
	Let $\mathcal{O}$ be an open and connected subset of $\R^d$ $(d=2,3)$, the boundary of which is uniformly of class $\mathrm{C}^3$ (cf. \cite{Heywood}).  For the domain $\mathcal{O}$, we also assume that, there exists a positive constant $\lambda_1 $ such that the following Poincar\'e inequality is satisfied:
			\begin{align}\label{2.1}
				\lambda_1\int_{\mathcal{O}} |\psi(x)|^2 \d x \leq \int_{\mathcal{O}} |\nabla \psi(x)|^2 \d x,  \ \text{ for all } \  \psi \in \H^{1}_0 (\mathcal{O}).
			\end{align}
	\end{assumption}
	A domain in which Poincar\'e's inequality is satisfied, we call it as a \emph{Poincar\'e domain} (cf. \cite[p.306]{R.Temam} and \cite[p.117]{Robinson2}). It can be easily seen that if $\mathcal{O}$ is bounded in some direction, then the Poincar\'e inequality holds.  For example, in two dimensions, if $x=(x_1,x_2)\in\mathbb{R}^2$, then one can take $\mathcal{O}$ is included in a region of the form  $0<x_1<L$. 
	
	\subsection{Difficulties and approaches}
	 For 2D as well as 3D CBF/SCBF equations with $r\geq3$, it is not easy to show that the solution satisfies the energy equality (unless the existence of strong solution is known). One needs to construct  a sequence which converges in both Sobolev space ($\H^1(\mathcal{O})$) and Lebesgue space ($\L^{r+1}(\mathcal{O})$) simultaneously. In \cite{FHR,HR1}, the authors presented an idea  to solve this problem by using the eigenfunctions of the Stokes operator on periodic and bounded domains. Later, the author in \cite{Mohan1,Mohan} used this method to prove the energy equality for CBF and SCBF equations on bounded domains. But this method is no longer applicable in unbounded domains like Poincar\'e domains due to the lack of eigenvalues and eigenfunctions of the Stokes operator.  In the case of unbounded domains, we know that $\C_0^{\infty}(\mathcal{O};\R^d)$ is dense $\H^2(\mathcal{O})$ and $\H^2(\mathcal{O})$ is continuously embedded in $\H^1(\mathcal{O})$ as well as in $\L^{r+1}(\mathcal{O})$ (for $d=2,3$), there is always a sequence in $\C_0^{\infty}(\mathcal{O};\R^d)$ (or even in $\H^2(\mathcal{O})$) such that it will converge in both $\H^1(\mathcal{O})$ and $\L^{r+1}(\mathcal{O})$ simultaneously. But the construction of such sequences satisfying the divergence free condition is the main task (see the spaces $\V$ and $\wi\L^{r+1}$ defined in Section \ref{sec2}). From the work \cite{BM}, we know the existence of a self-adjoint operator $\mathcal{L}$ in unbounded domains whose inverse is compact and the eigenfunctions of operator $\mathcal{L}$ form an orthonormal basis in $\L^2(\mathcal{O})$ (see Subsection \ref{C_O} below). Using the eigenfunctions of operator $\mathcal{L}$, we construct a sequence which converges in both $\V$ and $\wi\L^{r+1},$ simultaneously, and help us to obtain the energy inequality.
	
	A major prevailing result on the random attractors for SPDEs (associated with the Gelfand triple $\V\hookrightarrow \H\hookrightarrow\V'$, where $\V$ is a separable Banach space  with its topological dual $\V'$ and $\H$ is a separable Hilbert space) depends heavily on the existence of a random compact attracting set (cf. \cite{CDF}). But in the case of unbounded domains, the embedding $\V\hookrightarrow \H$ is no longer compact. Therefore, we are not able to prove the existence of random attractors using the compactness criterion. In the deterministic case, this difficulty (in unbounded domains) was resolved by different methods, cf.  \cite{Abergel, Ghidaglia,Rosa}, etc., for the autonomous case and \cite{CLR1, CLR2}, etc., for the non-autonomous case. For SPDEs, the methods available in the deterministic case have also been  generalized by several authors (see for example, \cite{BLL,BLW, BL, Wang}, etc.). In particular, the authors in \cite{BL} considered the 2D stochastic NSE  in Poincar\'e domains perturbed by a very general irregular additive white noise and the existence of stochastic flow (or RDS) is provided. Apart from that, they proposed sufficient conditions for the existence of a unique random attractor in \cite{BCLLLR}. The existence of a unique random attractor for the 2D stochastic NSE  in Poincar\'e domains is proved in \cite{BCLLLR}.  
	
	The concept of an asymptotically compact cocycle was introduced in \cite{CLR1} and the authors have established the existence of attractors for the non-autonomous 2D Navier-Stokes equations. Later, this concept has been  utilized to prove the existence of random attractors for several SPDEs like 1D stochastic lattice differential equation \cite{BLL}, stochastic NSE on the 2D unit sphere \cite{BGT}, stochastic $g$-NSE \cite{FY,LL,LXS}, stochastic non-autonomous Kuramoto-Sivashinsky equations \cite{LYZ}, stochastic heat equations in materials with memory on thin domains \cite{SLHZ}, stochastic reaction-diffusion equations \cite{BLW,Slavik}, 3D stochastic Benjamin-Bona-Mahony equations \cite{Wang}, etc., and references therein. 
	
	The existence of a random attractor for stochastic 3D NSE with damping driven by a multiplicative noise is established in \cite{LG}. The authors in \cite{You} and \cite{HZ} showed the existence of a random attractor and exponential attractor, respectively, for 3D damped NSE in bounded domains with additive noise by verifying the pullback flattening property. But in 3D bounded domains, due to the technical difficulties described in the works \cite{KZ,Mohan}, etc.,  (commutativity of the projection operator with $-\Delta$ and the nonzero boundary condition of projected nonlinear damping term), some of the results obtained in the above mentioned works may not hold true.

	Recently, authors in \cite{GLS} proved the existence of random attractors for SPDEs having locally monotone terms by assuming that the embedding $\V\hookrightarrow \H$ is compact. Even though our system satisfies a local monotonicity property for $d=2$ with $r\in[1,3]$ (see \eqref{fe2_1} below), this work does not fall in the framework of \cite{GLS}, as the embedding $\V\hookrightarrow \H$ is not compact in unbounded domains. 
	
	\subsection{Novelties of the work}
	In this paper, our  aim is to prove the existence  and uniqueness of weak solutions, and the existence and uniqueness of random attractors of the following stochastic convective Brinkman-Forchheimer equations perturbed by irregular additive white noise in unbounded domain $\mathcal{O}$ satisfying Assumption \ref{assumpO}:
	\begin{equation}\label{SCBF}
		\left\{
		\begin{aligned}
			\d\u+[-\mu \Delta\u+(\u\cdot\nabla)\u+\alpha\u+\beta|\u|^{r-1}\u+\nabla p]\d t&=\boldsymbol{f}\d t + \d\mathrm{W}, \ \text{ in } \ \mathcal{O}\times(0,\infty), \\ \nabla\cdot\u&=0, \ \text{ in } \ \mathcal{O}\times(0,\infty), \\
			\u&=\mathbf{0},\ \ \text{ on } \ \partial\mathcal{O}\times[0,\infty), \\
			\u(0)&=\boldsymbol{x}, \ \text{ in } \ \mathcal{O},
		\end{aligned}
		\right.
	\end{equation} for $d=2$ with $r\geq1$, $d=3$ with $r\geq3$ and $d=r=3$ with $2\beta\mu\geq1$, where $\W(\cdot)$ is an $\H$-valued Wiener process on some given filtered probability space $(\Omega, \mathcal{F}, (\mathcal{F}_t)_{t\in \R}, \mathbb{P})$, whose properties will be specified in Section \ref{sec2} below.	The existence of a unique weak solution satisfying the energy equality to SCBF equations (the transformed system \eqref{cscbf}) is proved by using a Faedo-Galerkin approximation technique. We use the concepts developed in \cite{BCLLLR} to prove the existence of a unique global random attractor for SCBF equations (with irregular white noise) in unbounded domains. As discussed in \cite{BCLLLR},  we provide a special attention to the noise with low spatial regularity. The asymptotic compactness of RDS generated by \eqref{SCBF} is proved using the method of energy equations introduced in \cite{Ball}. We consider an infinite dimensional driving Wiener process with minimal assumptions on its Cameron-Martin space (or Reproducing Kernel Hilbert space). We also point out  that the regularity of the noise needed to obtain random attractors for SCBF equations for $d=2$ with $r\in[1,3]$ and $d=r=3$ with $2\beta\mu\geq1$, is the same as that of 2D NSE  (cf. \cite{BL} and see Assumptions \ref{assump1} and \ref{assump2}), whereas for the case $d=2,3$ with $r\in(3,\infty)$, we require more spatial regularity on the noise (see Assumption \ref{assump2}). 
	 
	 In \cite{CF}, the authors proved that the existence of compact invariant random set is a sufficient condition for the existence of invariant measures. They have applied this concept  to prove the existence of invariant measures for reaction-diffusion equations and 2D stochastic NSE in bounded domains.   The authors in \cite{BL,KM8}, etc., used this idea to prove the existence of random attractors for 2D stochastic NSE in unbounded domains.   Since, the random attractor itself is a compact invariant set, the existence of invariant measures is assured. In addition, we prove the uniqueness of  invariant measures for system \eqref{SCBF} in Poincar\'e domains by using the exponential stability of solutions. 
		\subsection{Outline} 
		The rest of the paper is organized as follows: In the next section, we provide the necessary function spaces needed to obtain the existence and uniqueness of random attractors for the system \eqref{SCBF}. Also, we define the linear and nonlinear operators, and explain  their properties. Moreover, we provide an abstract formulation to the system \eqref{SCBF} in the same section. The metric dynamical system (MDS) and random dynamical system (RDS) corresponding to SCBF equations is constructed in Section \ref{sec5}. The existence and uniqueness of a weak solution satisfying the energy equality to the transformed SCBF equations (see \eqref{cscbf}) by using a Faedo-Galerkin approximation technique is also established (Theorem \ref{solution}) in the same section.  Section \ref{sec6} is devoted for establishing  the main result of this paper, that is, the existence of a random attractor for 2D and 3D SCBF equations on  Poincar\'e domains. In order to do this, we first present Lemma \ref{RA1}, which provides  us the energy estimates for SCBF equations. Then, we prove the weak continuity of the RDS generated by SCBF equations in Lemmas \ref{weak_topo1} and \ref{weak_topo2}. Based on Lemma \ref{RA1}, we introduce new classes of functions $\mathfrak{K}_1$ and $\mathfrak{K}_2$, which are defined in Definition \ref{RA2}. Then, we define two classes $\mathfrak{DK}_1$ and $\mathfrak{DK}_2$ of closed and bounded random sets using functions in the classes $\mathfrak{K}_1$ and $\mathfrak{K}_2$, respectively. We achieve the goal of this work by proving  Theorem \ref{Main_theorem_1}, which affirms that the RDS $\varphi$ generated by SCBF equations on Poincar\'e domains is $\mathfrak{DK}_1$-asymptotically compact (for $d=2$ with $r\in[1,3)$) and $\mathfrak{DK}_2$-asymptotically compact (for $d=2,3$ with $r\geq3$). Hence, in view of \cite[Theorem 2.8 ]{BCLLLR}, the existence of a random attractor of $\varphi$ is deduced. In the final section, we show the existence of a unique invariant measure for the system \eqref{SCBF} in Poincar\'e domains  (Theorem \ref{UIM2}).

	\section{Mathematical Formulation}\label{sec2}\setcounter{equation}{0}
	In this section, we provide the necessary function spaces needed to obtain the existence of random attractors for SCBF equations. Furthermore, we define some operators and their properties to get an abstract formulation for the system \eqref{SCBF} and main result of this work.
	\subsection{Function spaces} Let $\C_0^{\infty}(\mathcal{O};\R^d)$ denote the space of all infinite times differentiable functions  ($\R^d$-valued) with compact support in $\mathcal{O}\subset\R^d$. We define 
	\begin{align*} 
		\mathcal{V}&:=\{\u\in\C_0^{\infty}(\mathcal{O};\R^d):\nabla\cdot\u=0\},\\
		\mathbb{H}&:=\text{the closure of }\ \mathcal{V} \ \text{ in the Lebesgue space } \L^2(\mathcal{O})=\mathrm{L}^2(\mathcal{O};\R^d),\\
		\mathbb{V}&:=\text{the closure of }\ \mathcal{V} \ \text{ in the Sobolev space } \H^1(\mathcal{O})=\mathrm{H}^1(\mathcal{O};\R^d),\\
			\mathbb{V}_s&:=\text{the closure of }\ \mathcal{V} \ \text{ in the Sobolev space } \H^s(\mathcal{O})=\mathrm{H}^s(\mathcal{O};\R^d),
			&\text{  for  } s>1,\\
		\widetilde{\L}^{p}&:=\text{the closure of }\ \mathcal{V} \ \text{ in the Lebesgue space } \L^p(\mathcal{O})=\mathrm{L}^p(\mathcal{O};\R^d),
	&\text{	for  }p\in(2,\infty).
	\end{align*}
	 Then, we characterize the spaces $\H$, $\V$ and $\widetilde{\L}^p$   with norms $$\|\u\|_{\H}^2:=\int_{\mathcal{O}}|\u(x)|^2\d x, \ \|\u\|_{\V}^2:=\int_{\mathcal{O}}|\nabla\u(x)|^2\d x	\ \text{ and }\ \|\u\|_{\widetilde{\L}^p}^p=\int_{\mathcal{O}}|\u(x)|^p\d x,$$ respectively. Let $(\cdot,\cdot)$ and $(\!(\cdot,\cdot)\!)$ denote the inner product in the Hilbert space $\H$ and $\V$, respectively, and $\langle \cdot,\cdot\rangle $ denote the induced duality between the spaces $\V$  and its dual $\V'$ as well as $\widetilde{\L}^p$ and its dual $\widetilde{\L}^{p'}$, where $\frac{1}{p}+\frac{1}{p'}=1$. We endow the space $\V\cap\widetilde{\L}^{p}$ with the norm $\|\u\|_{\V}+\|\u\|_{\widetilde{\L}^{p}},$ for $\u\in\V\cap\widetilde{\L}^p$ and its dual $\V'+\widetilde{\L}^{p'}$ with the norm $$\inf\left\{\|\v_1\|_{\V'}+\|\v_1\|_{\widetilde{\L}^{p'}}:\v=\v_1+\v_2, \ \v_1\in\V', \ \v_2\in\widetilde{\L}^{p'}\right\}.$$
	Moreover, we have the continuous embedding $\V\cap\widetilde{\L}^p\hookrightarrow\V\hookrightarrow\H\cong\H^{'}\hookrightarrow\V'\hookrightarrow\V'+\widetilde{\L}^{p'}$.
	\subsection{Linear operator}
	Let $\mathcal{P}: \L^2(\mathcal{O}) \to\H$ denote the Helmholtz-Hodge orthogonal projection (cf.  \cite{OAL}). Let us define the Stokes operator 
	\begin{equation*}
		\A\u:=-\mathcal{P}\Delta\u,\;\u\in\D(\A).
	\end{equation*}
	The operator $\A$ is a linear continuous operator from $\V$ into $\V'$, satisfying
	\begin{equation*}
		\langle\A\u,\v\rangle=(\!(\u,\v)\!), \ \ \ \u,\v\in\V.
	\end{equation*}
	Since the boundary of $\mathcal{O}$ is uniformly of class $\mathrm{C}^3$, we infer that $\D(\A)=\V\cap\H^2(\mathcal{O})$ and $\|\A\u\|_{\H}$ defines a norm in $\D(\A),$ which is equivalent to the one in $\H^2(\mathcal{O})$ (cf. \cite[Lemma 1]{Heywood}). Above argument implies that $\mathcal{P}:\H^2(\mathcal{O})\to\H^2(\mathcal{O})$ is a bounded operator. Note that the operator $\A$ is a non-negative self-adjoint operator in $\H$ and 
	\begin{align}\label{2.7a}
		\langle\A\u,\u\rangle =\|\u\|_{\V}^2,\ \textrm{ for all }\ \u\in\V, \ \text{ so that }\ \|\A\u\|_{\V'}\leq \|\u\|_{\V}.
	\end{align}
	\begin{remark}
		Since $\mathcal{O}$ is a Poincar\'e domain, then $\A$ is invertible and its inverse $\A^{-1}$ is bounded. Moreover, for $\u\in\D(\A)$, we have 
		\begin{align*}
			\|\u\|_{\V}^2=(\nabla\u,\nabla\u)=(\A\u,\u)\leq\|\A\u\|_{\H}\|\u\|_{\H}\leq\frac{1}{\lambda_1^{1/2}}\|\A\u\|_{\H}\|\u\|_{\V},
		\end{align*}
		so that we get $\|\A\u\|_{\H}\geq\lambda_1^{-1/2}	\|\u\|_{\V},$ for all $\u\in\D(\A).$
	\end{remark}
	\subsection{Bilinear operator}
	Next, we define the \emph{trilinear form} $b(\cdot,\cdot,\cdot):\V\times\V\times\V\to\R$ by $$b(\u,\v,\w)=\int_{\mathcal{O}}(\u(x)\cdot\nabla)\v(x)\cdot\w(x)\d x=\sum_{i,j=1}^d\int_{\mathcal{O}}\u_i(x)\frac{\partial \v_j(x)}{\partial x_i}\w_j(x)\d x.$$ If $\u, \v$ are such that the linear map $b(\u, \v, \cdot) $ is continuous on $\V$, the corresponding element is denoted by $\B(\u, \v)\in \V'$. We also denote $\B(\u) = \B(\u, \u)=\mathcal{P}[(\u\cdot\nabla)\u]$. Using an integration by parts, we obtain 
	\begin{equation}\label{b0}
		\left\{
		\begin{aligned}
			b(\u,\v,\v) &= 0,\ \text{ for all }\ \u,\v \in\V,\\
			b(\u,\v,\w) &=  -b(\u,\w,\v),\ \text{ for all }\ \u,\v,\w\in \V.
		\end{aligned}
		\right.\end{equation}
	The following interpolation inequality is used frequently in the upcoming sections. 
	\begin{lemma}[Interpolation inequality] \label{Interpolation}
		Assume $1\leq s_1\leq s\leq s_2\leq \infty$, $a\in(0,1)$ such that $\frac{1}{s}=\frac{a}{s_1}+\frac{1-a}{s_2}$ and $\u\in\L^{s_1}(\mathcal{O})\cap\L^{s_2}(\mathcal{O})$, then we have 
		\begin{align*}
			\|\u\|_{\L^s(\mathcal{O})}\leq\|\u\|_{\L^{s_1}(\mathcal{O})}^{a}\|\u\|_{\L^{s_2}(\mathcal{O})}^{1-a}. 
		\end{align*}
	\end{lemma}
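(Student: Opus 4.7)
The plan is to reduce the inequality to a direct application of Hölder's inequality by splitting $|u|^s$ into two factors whose exponents match $s_1$ and $s_2$ after pairing with conjugate Hölder exponents. Assume first that $s_2 < \infty$ and that $u$ is not identically zero; the cases $u \equiv 0$, $s = s_1$ (i.e.\ $a=1$), $s = s_2$ (i.e.\ $a=0$), and $s_2 = \infty$ will be handled separately at the end.

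The key algebraic step is to choose the Hölder pair $p = s_1/(as)$ and $q = s_2/((1-a)s)$. These are both $\geq 1$ because $s_1 \leq s$ forces $as \leq s_1$ (via the defining relation for $1/s$, since $1/s \geq a/s_1$ implies $s \leq s_1/a$), and analogously for $q$. A direct check using the hypothesis $\frac{1}{s} = \frac{a}{s_1} + \frac{1-a}{s_2}$ gives $\frac{1}{p} + \frac{1}{q} = \frac{as}{s_1} + \frac{(1-a)s}{s_2} = 1$, so these are genuine Hölder conjugates.

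Writing $|u|^s = |u|^{as} \cdot |u|^{(1-a)s}$ and applying Hölder's inequality with exponents $p$ and $q$ yields
\begin{equation*}
\int_{\mathcal{O}} |u|^s \,\mathrm{d}x \;\leq\; \left(\int_{\mathcal{O}} |u|^{s_1} \,\mathrm{d}x\right)^{\!as/s_1} \left(\int_{\mathcal{O}} |u|^{s_2} \,\mathrm{d}x\right)^{\!(1-a)s/s_2},
\end{equation*}
since $as \cdot p = s_1$ and $(1-a)s \cdot q = s_2$. Taking the $s$-th root on both sides gives the required inequality $\|u\|_{L^s} \leq \|u\|_{L^{s_1}}^{a} \|u\|_{L^{s_2}}^{1-a}$.

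For the endpoint case $s_2 = \infty$, the defining relation reduces to $s = s_1/a$, so one instead writes $|u|^s \leq \|u\|_{L^\infty}^{(1-a)s} |u|^{s_1}$, integrates, and takes the $s$-th root using $s_1/s = a$. The boundary cases $a=0$ or $a=1$ are trivial. There is no real obstacle here: the only mildly delicate point is verifying that the Hölder conjugates $p,q$ are admissible (i.e.\ both $\geq 1$), which follows immediately from $s_1 \leq s \leq s_2$ together with the convex-combination identity for $1/s$; everything else is bookkeeping of exponents.
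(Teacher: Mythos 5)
Your proof is correct: the decomposition $|u|^s=|u|^{as}\,|u|^{(1-a)s}$ with the Hölder pair $p=s_1/(as)$, $q=s_2/((1-a)s)$ is the standard argument, the verification that $1/p+1/q=1$ and $p,q\geq 1$ follows exactly as you say from the convex-combination identity for $1/s$, and your treatment of the endpoint $s_2=\infty$ is also right. The paper states this lemma without proof (it is a classical fact about log-convexity of the $\mathrm{L}^p$ norms), so there is no proof to compare against; your argument is the canonical one and fills the gap correctly.
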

	\begin{remark}
		The following well-known inequality is due to Ladyzhenskaya (Lemmas 1 and 2 of \cite[Chapter I]{OAL}):
		\begin{align}\label{lady}
			\|\v\|_{\L^{4}(\mathcal{O}) } \leq \begin{cases}
				2^{1/4} \|\v\|^{1/2}_{\L^{2}(\mathcal{O}) } \|\nabla \v\|^{1/2}_{\L^{2}(\mathcal{O}) }, \ \ \ \v\in \H^{1,2}_{0} (\mathcal{O}), \text{ for } d=2,\\
				2^{1/2} \|\v\|^{1/4}_{\L^{2}(\mathcal{O}) } \|\nabla \v\|^{3/4}_{\L^{2}(\mathcal{O}) }, \ \ \ \v\in \H^{1,2}_{0} (\mathcal{O}), \text{ for } d=3.
			\end{cases}
		\end{align}
	\end{remark}
	\begin{remark}
		1. 	In the trilinear form, using H\"older's inequality, we obtain
		\begin{align}\label{exten1}
			|b(\u,\v,\w)|=|b(\u,\w,\v)|\leq \|\u\|_{\widetilde{\L}^{4}}\|\v\|_{\widetilde{\L}^{4}}\|\w\|_{\V},
		\end{align}
		for all $\u, \v,  \w\in\V$ and
		\begin{align}\label{2p9 1}
			\|\B(\u,\v)\|_{\V'}\leq \|\u\|_{\widetilde{\L}^{4}}\|\v\|_{\widetilde{\L}^{4}}.
		\end{align}
		2. 	If $\u\in \mathrm{L}^4 (0, T; \widetilde{\L}^4),$ then $\B(\u)\in \mathrm{L}^2(0, T; \V').$ Indeed, by \eqref{2p9 1} we have 
		\begin{align}\label{2.8}
			\int_{0}^{T} \|\B(\u(t))\|^2_{\V'}\d t \leq \int_{0}^{T}\|\u(t)\|^4_{\widetilde{\L}^4}\d t < \infty. 
		\end{align}
		
	\end{remark}
	\begin{remark}
		For $r> 3$, using interpolation inequality (Lemma \ref{Interpolation}), we find  
		\begin{align*}
			\left|\langle \B(\u,\u),\v\rangle \right|=\left|b(\u,\v,\u)\right|\leq \|\u\|_{\widetilde{\L}^{\frac{2(r+1)}{r-1}}}\|\u\|_{\widetilde{\L}^{r+1}}\|\v\|_{\V}\leq\|\u\|_{\widetilde{\L}^{r+1}}^{\frac{r+1}{r-1}}\|\u\|_{\H}^{\frac{r-3}{r-1}}\|\v\|_{\V},
		\end{align*}
		for all $\v\in\V$. Thus, we have 
		\begin{align}\label{2.9a}
			\|\B(\u)\|_{\V'}\leq\|\u\|_{\widetilde{\L}^{r+1}}^{\frac{r+1}{r-1}}\|\u\|_{\H}^{\frac{r-3}{r-1}}.
		\end{align}
		Moreover, for $r>3$, if $\u\in \mathrm{L}^2(0,T; \H)\cap\mathrm{L}^{r+1} (0, T; \widetilde{\L}^{r+1}),$ then $\B(\u)\in \mathrm{L}^2(0, T; \V').$ Indeed, making use of \eqref{2.9a}, we get 
		\begin{align}\label{2.13}
			\int_{0}^{T} \|\B(\u(t))\|^2_{\V'}\d t \leq  \int_{0}^{T}\|\u(t)\|_{\widetilde{\L}^{r+1}}^{\frac{2(r+1)}{r-1}}\|\u(t)\|_{\H}^{\frac{2(r-3)}{r-1}}\d t \leq \|\u\|_{\mathrm{L}^{r+1}(0, T; \widetilde{\L}^{r+1})}^{\frac{2(r+1)}{r-1}}\|\u\|_{\mathrm{L}^2(0, T; \H)}^{\frac{2(r-3)}{r-1}}< \infty. 
		\end{align}
	\end{remark}
	\begin{remark}\label{RemarkI}
		Using interpolation inequality (Lemma \ref{Interpolation}), we have the following observation:\\
		1. For $r\in[1,3]$, if $\y\in \mathrm{L}^2(0,T; \H)\cap\mathrm{L}^{4} (0, T; \widetilde{\L}^{4}),$ then $\y\in \mathrm{L}^{r+1} (0, T; \widetilde{\L}^{r+1}).$ Indeed 
		
		\begin{align*}
			\int_{0}^{T} \|\y(t)\|^{r+1}_{\wi \L^{r+1}}\d t \leq  \int_{0}^{T}\|\y(t)\|_{\widetilde{\L}^{4}}^{2(r-1)}\|\y(t)\|_{\H}^{3-r}\d t \leq \|\y\|_{\mathrm{L}^{4}(0, T; \widetilde{\L}^{4})}^{2(r-1)}\|\y\|_{\mathrm{L}^2(0, T; \H)}^{3-r}< \infty.
		\end{align*}
		2.	For $r>3$, if $\y\in \mathrm{L}^2(0,T; \H)\cap\mathrm{L}^{r+1} (0, T; \widetilde{\L}^{r+1}),$ then $\y\in \mathrm{L}^{4} (0, T; \widetilde{\L}^{4}).$ Indeed 
		
		\begin{align*}
			\int_{0}^{T} \|\y(t)\|^4_{\wi \L^4}\d t \leq  \int_{0}^{T}\|\y(t)\|_{\widetilde{\L}^{r+1}}^{\frac{2(r+1)}{r-1}}\|\y(t)\|_{\H}^{\frac{2(r-3)}{r-1}}\d t \leq \|\y\|_{\mathrm{L}^{r+1}(0, T; \widetilde{\L}^{r+1})}^{\frac{2(r+1)}{r-1}}\|\y\|_{\mathrm{L}^2(0, T; \H)}^{\frac{2(r-3)}{r-1}}< \infty.
		\end{align*}
	\end{remark}
	Let us now provide some convergence results regarding the operator $b(\cdot,\cdot,\cdot)$, which will be used in the subsequent sections of the paper. 
	\begin{lemma}[{\cite[Ch. III, Lemma 3.2]{Temam}}]\label{convergence_b*}
		Let $\mathcal{O}_1\subset\mathcal{O}$, which is bounded, and $\psi: [0, T]\times \mathcal{O} \to \R^d$ ($d=2,3$) be a $\mathrm{C}^1$-class function such that $\mathrm{supp} (\psi (t, \cdot)) \subset \mathcal{O}_1,$ for $t\in[0, T],$ and 
		$$ \sup_{1\leq i, j\leq d} \sup_{(t, x)\in [0, T] \times \mathcal{O}_1} |D_i \psi^j (t, x)| = C < \infty.$$ 
		Let $\v_m$ converges to $\v$ in $\mathrm{L}^2(0, T; \V)$ weakly and in $\mathrm{L}^2(0, T; \L^2(\mathcal{O}_1))$ strongly. Then, we have 
		$$\int_{0}^{T} b(\v_m(t), \v_m(t), \psi(t)) \d t \to \int_{0}^{T} b(\v(t), \v(t), \psi(t)) \d t\ \text{ as }\ m\to\infty.$$
	\end{lemma}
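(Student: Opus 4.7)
The plan is to exploit the antisymmetry of the trilinear form, namely $b(\u,\v,\w)=-b(\u,\w,\v)$ (which is valid for divergence-free $\u$ and compactly supported $\w$), in order to move the spatial derivative off $\v_m$ and onto the smooth test function $\psi$. Only then will the available modes of convergence — weak in $\mathrm{L}^2(0,T;\V)$ and strong in $\mathrm{L}^2(0,T;\L^2(\mathcal{O}_1))$ — suffice, because any expression still carrying $\nabla\v_m$ would demand a compactness in $\V$ we do not have.

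First I would write the difference as the bilinear splitting
\begin{align*}
b(\v_m,\v_m,\psi)-b(\v,\v,\psi)
&=b(\v_m-\v,\v_m,\psi)+b(\v,\v_m-\v,\psi)\\
&=-b(\v_m-\v,\psi,\v_m)-b(\v,\psi,\v_m-\v),
\end{align*}
where the second equality uses the identity \eqref{b0} together with $\mathrm{supp}\,\psi(t,\cdot)\subset\mathcal{O}_1$, so that the boundary terms from the integration by parts vanish. From here the two terms are handled by essentially the same estimate: the uniform bound $|D_i\psi^j|\le C$ on $[0,T]\times\mathcal{O}_1$ and the support condition give
\begin{align*}
|b(\v_m-\v,\psi,\v_m)|&\le dC\,\|\v_m-\v\|_{\L^2(\mathcal{O}_1)}\,\|\v_m\|_{\L^2(\mathcal{O}_1)},\\
|b(\v,\psi,\v_m-\v)|&\le dC\,\|\v\|_{\L^2(\mathcal{O}_1)}\,\|\v_m-\v\|_{\L^2(\mathcal{O}_1)}.
\end{align*}

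Next, I would integrate in time and apply the Cauchy--Schwarz inequality. For the first piece, $\|\v_m\|_{\mathrm{L}^2(0,T;\L^2(\mathcal{O}_1))}$ is bounded uniformly in $m$ (weak convergence in $\mathrm{L}^2(0,T;\V)$ implies boundedness there, and the Poincar\'e embedding $\V\hookrightarrow\H$ restricted to the bounded set $\mathcal{O}_1$ controls the $\mathrm{L}^2(\mathcal{O}_1)$-norm), while $\|\v_m-\v\|_{\mathrm{L}^2(0,T;\L^2(\mathcal{O}_1))}\to 0$ by hypothesis. For the second piece, $\v\in\mathrm{L}^2(0,T;\L^2(\mathcal{O}_1))$ is a fixed element and the same strong convergence drives the other factor to zero. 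Combining, both contributions vanish as $m\to\infty$, yielding the desired limit.

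The only subtle point — and really the whole content of the lemma — is the step of integrating by parts to shift the derivative onto $\psi$. Once that is legitimate (and it is, because $\v_m,\v\in\V$ are weakly divergence-free and $\psi(t,\cdot)$ is compactly supported in $\mathcal{O}_1\subset\mathcal{O}$), the remainder is a routine splitting plus Cauchy--Schwarz. I do not expect any essential obstacle beyond bookkeeping with the support of $\psi$ to ensure that all integrals reduce to $\mathcal{O}_1$, where the strong $\L^2$-convergence is available.
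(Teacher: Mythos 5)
Your proof is correct and follows essentially the same route as the classical argument in Temam that the paper cites: use the antisymmetry $b(\u,\v,\w)=-b(\u,\w,\v)$ to shift the derivative onto $\psi$, so that the resulting quadratic expression in $\v_m$ is controlled by the strong convergence in $\mathrm{L}^2(0,T;\L^2(\mathcal{O}_1))$ together with the uniform $\mathrm{L}^2(0,T;\V)$-bound coming from weak convergence. The bilinear splitting and Cauchy--Schwarz bookkeeping are exactly what is needed, and no step is missing.
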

	\begin{corollary}[{\cite[Corollary 5.3]{BL}}]\label{convergence_b1}
		For $d=2$ with $r\in[1,3)$, assume that $\{\v_m\}_{m\in \mathbb{N}}$ is a bounded sequence in $\mathrm{L}^{\infty}(0, T; \H),\ \v \in \mathrm{L}^{\infty}(0, T; \H),$ $\v_m$ converges to $\v$ weakly and strongly in $\mathrm{L}^2(0, T;\V)$ and $\mathrm{L}^2(0, T; {\L}^2_{\emph{loc}} (\mathcal{O}))$, respectively. Then for any $\y \in \mathrm{L}^{4}(0,T;\widetilde{\L}^{4}),$ $$\int_{0}^{T} b(\v_m(t), \v_m(t), \y(t)) \d t \to \int_{0}^{T} b(\v(t), \v(t), \y(t)) \d t\ \text{ as }\ m\to\infty.$$
	\end{corollary}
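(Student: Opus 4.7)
\emph{Plan of proof.} The strategy is to reduce, via a density argument, to the case in which the test function is $\mathrm{C}^1$ with compact spatial support, so that Lemma~\ref{convergence_b*} can be invoked directly; the uniform-in-$m$ tail estimate will then be closed by the two-dimensional Ladyzhenskaya inequality~\eqref{lady}.

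First I would fix $\eta>0$ and pick $\y_\eta \in \mathrm{C}_0^\infty([0,T]\times\mathcal{O};\R^d)$ with $\|\y-\y_\eta\|_{\mathrm{L}^4(0,T;\L^4)} < \eta$; such an approximation exists by the density of $\mathrm{C}_0^\infty$ in $\mathrm{L}^4(0,T;\L^4)$ together with $\widetilde{\L}^4\subset\L^4$. Denote by $\mathcal{O}_\eta\subset\mathcal{O}$ a bounded open set containing the spatial support of $\y_\eta$. Then I would split
\begin{align*}
\int_0^T [b(\v_m,\v_m,\y) - b(\v,\v,\y)]\d t
&= \int_0^T [b(\v_m,\v_m,\y-\y_\eta)-b(\v,\v,\y-\y_\eta)]\d t \\
&\quad + \int_0^T [b(\v_m,\v_m,\y_\eta)-b(\v,\v,\y_\eta)]\d t.
\end{align*}
The second integral tends to zero as $m\to\infty$ by a direct application of Lemma~\ref{convergence_b*} with $\psi=\y_\eta$: the weak convergence in $\mathrm{L}^2(0,T;\V)$ is given in the hypothesis, while strong convergence of $\v_m$ in $\mathrm{L}^2(0,T;\L^2(\mathcal{O}_\eta))$ is supplied by the $\L^2_{\mathrm{loc}}$ assumption.

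For the tail, H\"older's inequality yields $|b(\u,\u,\w)| \leq \|\u\|_{\widetilde{\L}^4}\|\u\|_{\V}\|\w\|_{\widetilde{\L}^4}$; combined with the two-dimensional bound $\|\u\|_{\widetilde{\L}^4}\leq 2^{1/4}\|\u\|_\H^{1/2}\|\u\|_\V^{1/2}$ from~\eqref{lady}, this gives
\[
|b(\u,\u,\w)| \leq 2^{1/4}\|\u\|_\H^{1/2}\|\u\|_\V^{3/2}\|\w\|_{\widetilde{\L}^4}.
\]
Integrating in time and applying H\"older with exponents $(\infty,4/3,4)$,
\[
\int_0^T|b(\v_m,\v_m,\y-\y_\eta)|\d t \leq 2^{1/4}\|\v_m\|_{\mathrm{L}^\infty(0,T;\H)}^{1/2}\|\v_m\|_{\mathrm{L}^2(0,T;\V)}^{3/2}\|\y-\y_\eta\|_{\mathrm{L}^4(0,T;\widetilde{\L}^4)}.
\]
The sequence $\v_m$ is bounded in $\mathrm{L}^\infty(0,T;\H)$ by hypothesis and in $\mathrm{L}^2(0,T;\V)$ as a consequence of the weak convergence, and the same bounds hold for $\v$. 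Hence both tail pieces are at most $C\eta$ with $C$ independent of $m$, and a standard $\varepsilon/2$-argument (choose $\eta$ first, then send $m\to\infty$) closes the proof.

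The point worth flagging is the dimension restriction: the exponent $\tfrac{3}{2}$ on $\|\u\|_\V$ in the Ladyzhenskaya bound is exactly what makes $\y\in\mathrm{L}^4(0,T;\widetilde{\L}^4)$ the natural regularity class in two dimensions. In three dimensions the analogous computation produces $\|\u\|_\V^{7/4}$ and would demand $\y\in\mathrm{L}^8(0,T;\widetilde{\L}^4)$, so the bare $\mathrm{L}^4$-density step would fail to close; this is precisely why the statement is restricted to $d=2$.
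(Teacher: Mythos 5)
Your proof is correct, and it follows essentially the same route the paper takes for the analogous Corollary \ref{convergence_b2} (Corollary \ref{convergence_b1} itself is only cited from \cite{BL}): approximate the test function by a smooth compactly supported $\psi$, apply Lemma \ref{convergence_b*} to that piece, and close the tail by a uniform bound on $b(\v_m,\v_m,\cdot)$, with the two-dimensional Ladyzhenskaya inequality \eqref{lady} playing the role that the interpolation into $\widetilde{\L}^{2(r+1)/(r-1)}$ plays in the $r\geq 3$ case. Your closing remark correctly identifies why $\mathrm{L}^4(0,T;\widetilde{\L}^4)$ is the right class for $d=2$.
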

	\begin{corollary}\label{convergence_b2}
		For $d=2,3$ with $r\geq3$, assume that $\{\v_m\}_{m\in \mathbb{N}}$ is a bounded sequence in $\mathrm{L}^{\infty}(0, T; \H), \v \in \mathrm{L}^{\infty}(0, T; \H),$ $\v_m$ converges to $\v$ weakly and strongly in $\mathrm{L}^2(0, T;\V)\cap\mathrm{L}^{r+1} (0, T; \widetilde{\L}^{r+1})$ and $\mathrm{L}^2(0, T; \L^2_{\emph{loc}}(\mathcal{O}))$, respectively. Then, for any \ \ $\y \in \mathrm{L}^{2}(0,T;\H)\cap\mathrm{L}^{r+1}(0,T; \wi\L^{r+1})$, $$\int_{0}^{T} b(\v_m(t), \v_m(t), \y(t)) \d t \to \int_{0}^{T} b(\v(t), \v(t), \y(t)) \d t\ \text{ as }\ m\to\infty.$$
	\end{corollary}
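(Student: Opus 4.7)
The plan is to follow the density scheme used for Corollary \ref{convergence_b1}, but with the trilinear estimate adapted to the regime $r\geq 3$, exploiting the additional integrability $\v_m,\v\in\mathrm{L}^{r+1}(0,T;\widetilde{\L}^{r+1})$ of the bounded sequence and its limit.

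Given $\varepsilon>0$, I would first approximate $\y$, in the combined norm of $\mathrm{L}^2(0,T;\H)\cap\mathrm{L}^{r+1}(0,T;\widetilde{\L}^{r+1})$, by a function $\psi\in\mathrm{C}^1([0,T]\times\mathcal{O};\R^d)$ that is divergence-free and supported in $[0,T]\times\mathcal{O}_1$ for some bounded subdomain $\mathcal{O}_1\subset\mathcal{O}$, with
\[
\|\y-\psi\|_{\mathrm{L}^2(0,T;\H)}+\|\y-\psi\|_{\mathrm{L}^{r+1}(0,T;\widetilde{\L}^{r+1})}<\varepsilon.
\]
Such density follows by time-discretizing $\y$ by step functions valued in $\H\cap\widetilde{\L}^{r+1}$, approximating each value by an element of $\mathcal{V}$ (simultaneously dense in both $\H$ and $\widetilde{\L}^{r+1}$ by construction), truncating the spatial support to a bounded subdomain, and finally mollifying in time.

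Writing
\begin{align*}
\int_0^T[b(\v_m,\v_m,\y)-b(\v,\v,\y)]\d t
&=\int_0^T b(\v_m,\v_m,\y-\psi)\d t-\int_0^T b(\v,\v,\y-\psi)\d t\\
&\quad+\int_0^T[b(\v_m,\v_m,\psi)-b(\v,\v,\psi)]\d t=:I_m+J_m+K_m,
\end{align*}
the term $K_m\to 0$ as $m\to\infty$ by direct application of Lemma \ref{convergence_b*}, whose hypotheses are satisfied by $\psi$ together with the weak convergence in $\mathrm{L}^2(0,T;\V)$ and the strong convergence in $\mathrm{L}^2(0,T;\L^2_{\emph{loc}}(\mathcal{O}))$. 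For $I_m$ and $J_m$, H\"older's inequality combined with the interpolation $\|\u\|_{\widetilde{\L}^{2(r+1)/(r-1)}}\leq \|\u\|_{\H}^{(r-3)/(r-1)}\|\u\|_{\widetilde{\L}^{r+1}}^{2/(r-1)}$ (valid precisely because $r\geq 3$) yields
\[
|b(\u,\u,\y-\psi)|\leq \|\u\|_{\H}^{\frac{r-3}{r-1}}\|\u\|_{\widetilde{\L}^{r+1}}^{\frac{2}{r-1}}\|\u\|_{\V}\|\y-\psi\|_{\widetilde{\L}^{r+1}}.
\]
Applying this with $\u\in\{\v_m,\v\}$, placing the $\|\u\|_{\H}$ factor in $\mathrm{L}^\infty(0,T)$ by the uniform bound in $\mathrm{L}^\infty(0,T;\H)$, and using H\"older in time with the remaining exponents $\bigl(2,\tfrac{(r-1)(r+1)}{2},r+1\bigr)$, whose reciprocals sum to $\tfrac{1}{2}+\tfrac{1}{r-1}\leq 1$ for $r\geq 3$, I obtain $|I_m|+|J_m|\leq C\varepsilon$ uniformly in $m$, where $C$ depends on $T$ and on the $\mathrm{L}^\infty(\H)\cap\mathrm{L}^2(\V)\cap\mathrm{L}^{r+1}(\widetilde{\L}^{r+1})$ norms of $\v$ and $\{\v_m\}$ (the last piece of which is supplied by the strong convergence hypothesis). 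Letting $m\to\infty$ and then $\varepsilon\to 0$ finishes the argument.

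The principal technical obstacle I anticipate is the density statement used in the first step: one must ensure the approximating functions remain divergence-free while converging in both $\H$ and $\widetilde{\L}^{r+1}$ simultaneously, which requires a careful mollification-truncation procedure in the unbounded domain $\mathcal{O}$. Once this is settled, the remaining work is a routine bookkeeping of H\"older exponents that closes exactly because $r\geq 3$ (the borderline case $r=3$ saturates $\tfrac{1}{2}+\tfrac{1}{r-1}=1$, consistent with Corollary \ref{convergence_b1}).
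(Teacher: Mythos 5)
Your proposal is correct and follows essentially the same scheme as the paper's proof: approximate $\y$ by a compactly supported smooth $\psi$ simultaneously in $\mathrm{L}^2(0,T;\H)$ and $\mathrm{L}^{r+1}(0,T;\widetilde{\L}^{r+1})$, dispatch the $\psi$-term via Lemma \ref{convergence_b*}, and absorb the remainder by H\"older plus interpolation using the uniform bounds on $\{\v_m\}$ and $\v$. The only (immaterial) difference is that you apply the interpolation inequality to $\v_m$ and $\v$, placing the full $\widetilde{\L}^{r+1}$-norm on $\y-\psi$, whereas the paper interpolates $\y-\psi$ into $\widetilde{\L}^{2(r+1)/(r-1)}$ and keeps $\|\v_m\|_{\widetilde{\L}^{r+1}}$; both bookkeepings close for $r\geq 3$, and the density step you flag as the main obstacle is exactly what the paper invokes as ``a standard regularization method.''
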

	\begin{proof}
		From the assumptions, we can find a  constant $L>0,$ such that
		\begin{align*}
			\left(\int_{0}^{T}\|\v_m(t)\|^{2}_{\V}\d t\right)^{\frac{1}{2}}+\left(\int_{0}^{T}\|\v(t)\|^{2}_{\V}\d t\right)^{\frac{1}{2}}&+\left(\int_{0}^{T}\|\v_m(t)\|^{r+1}_{\wi\L^{r+1}}\d t\right)^{\frac{1}{r+1}}\nonumber\\&+\left(\int_{0}^{T}\|\v(t)\|^{r+1}_{\wi\L^{r+1}}\d t\right)^{\frac{1}{r+1}} \leq L.
		\end{align*}
		Let us choose $\epsilon > 0$. Since $\y \in \mathrm{L}^{2}(0,T;\H)\cap\mathrm{L}^{r+1}(0,T; \wi\L^{r+1}),$ by a standard regularization method, there exists a function $\psi$ satisfying the assumptions of Lemma \ref{convergence_b*} such that $(\int_{0}^{T} \|\y(s)- \psi(s)\|^{r+1}_{\wi{\L}^{r+1}}\d s)^{\frac{1}{r+1}} < \frac{\epsilon}{3L^2}$ and $(\int_{0}^{T} \|\y(s)- \psi(s)\|^{2}_{\H}\d s)^{\frac{1}{2}} < \frac{\epsilon}{3L^2}$. Making use of Lemma \ref{convergence_b*}, we can find $M_{\epsilon} \in \mathbb{N}$ such that  $$\left|\int_{0}^{T} b(\v_m(t), \v_m(t), \psi(t)) \d t - \int_{0}^{T} b(\v(t), \v(t), \psi(t)) \d t\right| < \frac{\epsilon}{3},$$ for all $m \geq M_{\epsilon}.$ Hence,  for $m > M_{\epsilon},$ using H\"older's and interpolation (see \eqref{Interpolation}) inequalities, we obtain (for $r>3$)
		\begin{align*}
			& \left|\int_{0}^{T} b(\v_m(t), \v_m(t), \y(t)) \d t - \int_{0}^{T} b(\v(t), \v(t), \y(t)) \d t\right|\\
			&	\leq \left|\int_{0}^{T} b(\v_m(t), \v_m(t), \y(t)-\psi(t)) \d t\right| + \left|\int_{0}^{T} b(\v(t), \v(t),\y(t)- \psi(t)) \d t\right| \\
			&\quad+ \left|\int_{0}^{T} b(\v_m(t), \v_m(t), \psi(t)) \d t - \int_{0}^{T} b(\v(t), \v(t), \psi(t)) \d t\right|\\
			&	<  \frac{\epsilon}{3} + \int_{0}^{T} \|\v_m(t)\|_{\widetilde{\L}^{r+1}} \|\v_m(t)\|_{\V} \|\y(t)-\psi(t)\|_{\wi{\L}^{\frac{2(r+1)}{r-1}}} \d t\\&\quad+ \int_{0}^{T} \|\v(t)\|_{\widetilde{\L}^{r+1}} \|\v(t)\|_{\V} \|\y(t)-\psi(t)\|_{\wi{\L}^{\frac{2(r+1)}{r-1}}}\d t \\
			&	\leq  \frac{\epsilon}{3} + \int_{0}^{T} \|\v_m(t)\|_{\widetilde{\L}^{r+1}} \|\v_m(t)\|_{\V} \|\y(t)-\psi(t)\|^{\frac{2}{r-1}}_{\widetilde{\L}^{r+1}}\|\y(t)-\psi(t)\|^{\frac{r-3}{r-1}}_{\H} \d t\\&\quad+ \int_{0}^{T} \|\v(t)\|_{\widetilde{\L}^{r+1}} \|\v(t)\|_{\V} \|\y(t)-\psi(t)\|^{\frac{2}{r-1}}_{\wi{\L}^{r+1}}\|\y(t)-\psi(t)\|^{\frac{r-3}{r-1}}_{\H}\d t \\
			&\leq \frac{\epsilon}{3} + \left(\int_{0}^{T}\|\v_m(t)\|^{r+1}_{\wi\L^{r+1}}\d t\right)^{\frac{1}{r+1}}\left(\int_{0}^{T}\|\v_m(t)\|^{2}_{\V}\d t\right)^{\frac{1}{2}}\left(\int_{0}^{T} \|\y(t)- \psi(t)\|^{r+1}_{{\L}^{r+1}}\d t\right)^{\frac{2}{(r+1)(r-1)}}\nonumber\\&\quad\times\left(\int_{0}^{T} \|\y(t)- \psi(t)\|^{2}_{\H}\d t\right)^{\frac{r-3}{2(r-1)}} + \left(\int_{0}^{T}\|\v(t)\|^{r+1}_{\wi\L^{r+1}}\d t\right)^{\frac{1}{r+1}}\left(\int_{0}^{T}\|\v(t)\|^{2}_{\V}\d t\right)^{\frac{1}{2}}\nonumber\\&\quad\times\left(\int_{0}^{T} \|\y(t)- \psi(t)\|^{r+1}_{{\L}^{r+1}}\d t\right)^{\frac{2}{(r+1)(r-1)}}\left(\int_{0}^{T} \|\y(t)- \psi(t)\|^{2}_{\H}\d t\right)^{\frac{r-3}{2(r-1)}}\nonumber\\&<\epsilon,
		\end{align*}
		which completes the proof. For $r=3$, proof is similar as previous case and hence we omit it here.
	\end{proof}
	\subsection{Nonlinear operator}
	Let us now consider the nonlinear operator $\mathcal{C}(\u):=\mathcal{P}(|\u|^{r-1}\u)$. It is immediate that $\langle\mathcal{C}(\u),\u\rangle =\|\u\|_{\widetilde{\L}^{r+1}}^{r+1}$ and the map $\mathcal{C}(\cdot):\V\cap\widetilde{\L}^{r+1}\to\V'+\widetilde{\L}^{\frac{r+1}{r}}$. Also, for any $r\in [1, \infty)$ and $\u_1, \u_2 \in \V\cap\wi\L^{r+1}$, we have (cf. \cite[Subsection 2.4]{Mohan}),
	\begin{align}\label{MO_c}
		\langle\mathcal{C}(\u_1)-\mathcal{C}(\u_2),\u_1-\u_2\rangle \geq\frac{1}{2}\||\u_1|^{\frac{r-1}{2}}(\u_1-\u_2)\|_{\H}^2+\frac{1}{2}\||\u_2|^{\frac{r-1}{2}}(\u_1-\u_2)\|_{\H}^2\geq 0,
	\end{align}
	and 
	\begin{align}\label{a215}
		\|\u_1-\u_2\|_{\wi\L^{r+1}}^{r+1}\leq 2^{r-2}\||\u_1|^{\frac{r-1}{2}}(\u_1-\u_2)\|_{\H}^2+2^{r-2}\||\u_2|^{\frac{r-1}{2}}(\u_1-\u_2)\|_{\H}^2,
	\end{align}
	for $r\geq 1$ (replace $2^{r-2}$ with $1,$ for $1\leq r\leq 2$).
	Let us now provide some convergence results regarding the operator $\mathcal{C}(\cdot)$, which will be used in the sequel. 
	\begin{lemma}\label{convergence_c2_1}
		Let $\mathcal{O}_1\subset\mathcal{O}$, which is bounded, and $\psi: [0, T]\times \mathcal{O} \to \R^d$ be a continuous  function such that $\mathrm{supp} (\psi (t, \cdot)) \subset \mathcal{O}_1,$ for $t\in[0, T],$ and 
		$$ \sup_{(t, x)\in [0, T] \times \mathcal{O}_1} |\psi (t, x)| = C < \infty.$$   Assume that $\{\v_m\}_{m\in \mathbb{N}}$ is a bounded sequence in the space $\mathrm{L}^{\infty}(0, T; \H), \v \in \mathrm{L}^{\infty}(0, T; \H),$ $\v_m$ converges to $\v$ weakly and strongly in $\mathrm{L}^2(0, T; \V)\cap\mathrm{L}^{r+1} (0, T; \widetilde{\L}^{r+1})$ and $\mathrm{L}^2(0, T; \L^2(\mathcal{O}_1))$, respectively. Then for any $r\in[1,3)$ with $\y\in\mathrm{L}^{4} (0, T; \widetilde{\L}^{4})\cap \mathrm{L}^2 (0, T; \H)$ and for any $r\geq3$ with $\y\in\mathrm{L}^{r+1} (0, T; \widetilde{\L}^{r+1})\cap \mathrm{L}^2 (0, T; \H)$, 
		\begin{align}\label{217}
			\int_{0}^{T} \big\langle\mathcal{C}(\v_m(t)+\y(t)) ,\psi(t)  \big\rangle \d t \to \int_{0}^{T} \big\langle\mathcal{C}(\v(t)+\y(t)) ,\psi(t)  \big\rangle \d t\ \text{ as }\ m\to\infty.
		\end{align}
	\end{lemma}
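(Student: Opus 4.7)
The plan is to reduce the convergence in \eqref{217} to a convergence of scalar integrals over the finite-measure set $[0,T]\times\mathcal{O}_1$, using that $\mathrm{supp}(\psi(t,\cdot))\subset\mathcal{O}_1$ for every $t$, and then to apply Vitali's convergence theorem. Pointwise a.e.\ convergence will come from the assumed strong local $\L^2$ convergence of $\v_m$, while uniform integrability will come from uniform $\widetilde{\L}^{r+1}$ bounds on $\v_m+\y$. Once the localization is unpacked, the bracket in \eqref{217} is just $\int_0^T\!\int_{\mathcal{O}_1}|\v_m+\y|^{r-1}(\v_m+\y)\cdot\psi\,\d x\,\d t$, and likewise for the limit.

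First, I would use the strong convergence $\v_m\to\v$ in $\mathrm{L}^2(0,T;\L^2(\mathcal{O}_1))$ to pass to a subsequence (not relabeled) such that $\v_m(t,x)\to\v(t,x)$ for a.e.\ $(t,x)\in[0,T]\times\mathcal{O}_1$. Since the map $\boldsymbol{z}\mapsto|\boldsymbol{z}|^{r-1}\boldsymbol{z}$ is continuous on $\R^d$, I obtain the pointwise a.e.\ convergence of $|\v_m+\y|^{r-1}(\v_m+\y)\cdot\psi$ to $|\v+\y|^{r-1}(\v+\y)\cdot\psi$ on $[0,T]\times\mathcal{O}_1$.

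Next, I would establish uniform integrability of the integrands. By the hypothesis on $\v_m$, the sequence is uniformly bounded in $\mathrm{L}^{r+1}(0,T;\widetilde{\L}^{r+1})$. For $r\geq 3$, the same integrability is directly assumed for $\y$; for $r\in[1,3)$ it follows from $\y\in\mathrm{L}^4(0,T;\widetilde{\L}^4)\cap\mathrm{L}^2(0,T;\H)$ combined with the interpolation in Remark~\ref{RemarkI}(1). Consequently $\{|\v_m+\y|^{r-1}(\v_m+\y)\}_m$ is uniformly bounded in $\mathrm{L}^{(r+1)/r}(0,T;\widetilde{\L}^{(r+1)/r})$, and pairing with $\psi$ (which satisfies $|\psi|\leq C$ and vanishes outside $\mathcal{O}_1$) gives a uniform bound in $\mathrm{L}^{(r+1)/r}([0,T]\times\mathcal{O}_1)$. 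Since $(r+1)/r>1$ and the underlying measure is finite, this yields the desired uniform integrability on $[0,T]\times\mathcal{O}_1$.

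Vitali's convergence theorem then gives convergence in $\mathrm{L}^1([0,T]\times\mathcal{O}_1)$ along the extracted subsequence, which is precisely \eqref{217} along that subsequence. A standard subsequence-of-subsequence argument promotes this to convergence of the original sequence: every subsequence admits a further subsequence along which the above reasoning identifies the same unique limit. The main difficulty is producing the uniform $\widetilde{\L}^{r+1}$ control on $\v_m+\y$ simultaneously in the two ranges $r\in[1,3)$ and $r\geq 3$; the interpolation embedding in Remark~\ref{RemarkI}(1) is the key ingredient that converts the weaker $\L^4$-type hypothesis on $\y$ in the small-$r$ regime into the $\L^{r+1}$ bound needed for the Vitali estimate, while for $r\geq 3$ the required integrability is built into the statement.
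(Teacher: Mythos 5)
Your argument is correct, but it follows a genuinely different route from the paper's. The paper proves \eqref{217} by a direct quantitative estimate: it applies Taylor's formula to the map $\boldsymbol{z}\mapsto|\boldsymbol{z}|^{r-1}\boldsymbol{z}$ to bound $|\mathcal{C}(\v_m+\y)-\mathcal{C}(\v+\y)|$ pointwise by $|\v_m-\v|$ times $|\v_m+\y|^{r-1}+|\v+\y|^{r-1}$, and then uses H\"older (with an $\L^{2(r-1)}\hookleftarrow\L^4$ bound for $1<r<3$, and an interpolation of the $\L^{(r+1)/2}$-norm of $\v_m-\v$ between $\L^2$ and $\L^{r+1}$ for $r\geq3$) so that the whole difference is controlled by $\|\v_m-\v\|_{\mathrm{L}^2(0,T;\L^2(\mathcal{O}_1))}$ raised to a positive power times uniformly bounded quantities. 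Your soft argument --- extract an a.e.\ convergent subsequence from the strong local $\L^2$ convergence, use continuity of the nonlinearity, verify uniform integrability from the uniform $\mathrm{L}^{(r+1)/r}$ bound of the integrand on the finite-measure set $[0,T]\times\mathcal{O}_1$, apply Vitali, and conclude for the full sequence by the subsequence-of-subsequences principle --- uses exactly the same two inputs (strong local $\L^2$ convergence and uniform $\wi\L^{r+1}$ control, with Remark \ref{RemarkI} supplying the $\L^{r+1}$ integrability of $\y$ when $r\in[1,3)$) but trades the explicit rate for a cleaner treatment of the nonlinearity: you need only its continuity, and the case distinction between the two ranges of $r$ collapses to the single question of where $\y$ lives. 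The paper's computation, in exchange, yields an explicit modulus of continuity in terms of $\|\v_m-\v\|_{\mathrm{L}^2(0,T;\L^2(\mathcal{O}_1))}$ and avoids passing to subsequences. Both proofs are complete and correct.
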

	\begin{proof}
		It is given that $\v_m$ converges to $\v$ weakly and strongly in $\mathrm{L}^2(0, T; \V)\cap\mathrm{L}^{r+1} (0, T; \widetilde{\L}^{r+1})$ and $\mathrm{L}^2(0, T; \L^2(\mathcal{O}_1))$, respectively.
		
		The case $r=1$ is obvious. Let us first consider $1< r<3$.	Using Taylor's formula (\cite[Theorem 7.9.1]{PGC}) and H\"older's inequality, we obtain 
		\begin{align*}
			& \left|\int_{0}^{T} \big\langle\mathcal{C}(\v_m(t)+\y(t)) ,\psi(t)  \big\rangle \d t - \int_{0}^{T} \big\langle\mathcal{C}(\v(t)+\y(t)) ,\psi(t)  \big\rangle \d t\right|\\&\leq \int_{0}^{T}\left| \big\langle\mathcal{C}(\v_m(t)+\y(t))  - \mathcal{C}(\v(t)+\y(t)) ,\psi(t)  \big\rangle\right| \d t \\
			&\leq  \sup_{(t, x)\in [0, T] \times \mathcal{O}_1} |\psi(t, x)| \bigg[\int_{0}^{T} \|\v_m(t)- \v(t)\|_{{\L}^{2}(\mathcal{O}_1)} \|\v_m(t)+\y(t)\|^{r-1}_{{\L}^{2(r-1)}(\mathcal{O}_1)} \d t\\&\qquad\qquad+\int_{0}^{T} \|\v_m(t)- \v(t)\|_{{\L}^{2}(\mathcal{O}_1)} \|\v(t)+\y(t)\|^{r-1}_{{\L}^{2(r-1)}(\mathcal{O}_1)} \d t\bigg] \\
			&\leq\sup_{(t,x)\in[0,T]\times\mathcal{O}_1}|\psi(t,x)|\|\v_m-\v\|_{\mathrm{L}^{2}(0,T;{\L}^{2}(\mathcal{O}_1))} \bigg[  \|\v_m+\y\|^{r-1}_{\mathrm{L}^{2(r-1)}(0,T;{\L}^{2(r-1)}(\mathcal{O}_1))}\\&\qquad\qquad+\|\v+\y\|^{r-1}_{\mathrm{L}^{2(r-1)}(0,T;{\L}^{2(r-1)}(\mathcal{O}_1))} \bigg] \\&\leq C\sup_{(t,x)\in[0,T]\times\mathcal{O}_1}|\psi(t,x)|\|\v_m-\v\|_{\mathrm{L}^{2}(0,T;{\L}^{2}(\mathcal{O}_1))} \bigg[  \|\v_m+\y\|^{r-1}_{\mathrm{L}^{4}(0,T;{\L}^{4}(\mathcal{O}_1))}+\|\v+\y\|^{r-1}_{\mathrm{L}^{4}(0,T;{\L}^{4}(\mathcal{O}_1))} \bigg] \\
			&	\to   0 \ \text{ as }\   m \to \infty.
		\end{align*}
		Finally, we consider $r\geq3$.	Using Taylor's formula (\cite[Theorem 7.9.1]{PGC}), H\"older's and interpolation (see \eqref{Interpolation}) inequalities, we obtain 
		\begin{align*}
			& \left|\int_{0}^{T} \big\langle\mathcal{C}(\v_m(t)+\y(t)) ,\psi(t)  \big\rangle \d t - \int_{0}^{T} \big\langle\mathcal{C}(\v(t)+\y(t)) ,\psi(t)  \big\rangle \d t\right|\\&\leq \int_{0}^{T}\left| \big\langle\mathcal{C}(\v_m(t)+\y(t))  - \mathcal{C}(\v(t)+\y(t)) ,\psi(t)  \big\rangle\right| \d t \\
			&\leq  \sup_{(t, x)\in [0, T] \times \mathcal{O}_1} |\psi(t, x)| \bigg[\int_{0}^{T} \|\v_m(t)- \v(t)\|_{{\L}^{\frac{r+1}{2}}(\mathcal{O}_1)} \|\v_m(t)+\y(t)\|^{r-1}_{{\L}^{r+1}(\mathcal{O}_1)} \d t\\&\qquad\qquad+\int_{0}^{T} \|\v_m(t)- \v(t)\|_{{\L}^{\frac{r+1}{2}}(\mathcal{O}_1)} \|\v(t)+\y(t)\|^{r-1}_{{\L}^{r+1}(\mathcal{O}_1)} \d t\bigg] \\
			&\leq\sup_{(t,x)\in[0,T]\times\mathcal{O}_1}|\psi(t,x)|\|\v_m-\v\|^{\frac{2}{r-1}}_{\mathrm{L}^{2}(0,T;{\L}^{2}(\mathcal{O}_1))}\|\v_m-\v\|^{\frac{r-3}{r-1}}_{\mathrm{L}^{r+1}(0,T;{\L}^{r+1}(\mathcal{O}_1))}\\&\quad\times \bigg[  \|\v_m+\y\|^{r-1}_{\mathrm{L}^{r+1}(0,T;{\L}^{r+1}(\mathcal{O}_1))}+\|\v+\y\|^{r-1}_{\mathrm{L}^{r+1}(0,T;{\L}^{r+1}(\mathcal{O}_1))} \bigg] \\
			&	\leq C\sup_{(t,x)\in[0,T]\times\mathcal{O}_1}|\psi(t,x)|\|\v_m-\v\|^{\frac{2}{r-1}}_{\mathrm{L}^{2}(0,T;{\L}^{2}(\mathcal{O}_1))}\bigg[\|\v_m|^{\frac{r-3}{r-1}}_{\mathrm{L}^{r+1}(0,T;{\L}^{r+1}(\mathcal{O}_1))}+\|\v\|^{\frac{r-3}{r-1}}_{\mathrm{L}^{r+1}(0,T;{\L}^{r+1}(\mathcal{O}_1))}\bigg] \\
			&\quad\times\bigg[  \|\v_m+\y\|^{r-1}_{\mathrm{L}^{r+1}(0,T;{\L}^{r+1}(\mathcal{O}_1))}+\|\v+\y\|^{r-1}_{\mathrm{L}^{r+1}(0,T;{\L}^{r+1}(\mathcal{O}_1))} \bigg] \\
			&	\to   0 \ \text{ as }\   m \to \infty,
		\end{align*}
		which completes the proof. 
	\end{proof}
	\begin{corollary}\label{convergence_c3_1}
		If $\{\v_m\}_{m\in \mathbb{N}}$ is a bounded sequence in $\mathrm{L}^{\infty}(0, T; \H), \v \in \mathrm{L}^{\infty}(0, T; \H)$, $\v_m$ converges to $\v$ weakly and strongly in $\mathrm{L}^2(0, T; \V)\cap\mathrm{L}^{r+1} (0, T; \widetilde{\L}^{r+1})$ and $\mathrm{L}^2(0, T; {\L}^2_{\emph{loc}}(\mathcal{O}))$, respectively. Then for any $r\in[1,3)$ with $\y\in \mathrm{L}^{4} (0, T; \widetilde{\L}^{4}) \cap\mathrm{L}^2 (0, T; \H)$ and for any $r\geq3$ with $\y\in\mathrm{L}^{r+1} (0, T; \widetilde{\L}^{r+1})\cap \mathrm{L}^2 (0, T; \H)$, we have 
		\begin{align*}
			\int_{0}^{T} \big\langle\mathcal{C}(\v_m(t)+\y(t)) ,\y(t)  \big\rangle \d t \to \int_{0}^{T} \big\langle\mathcal{C}(\v(t)+\y(t)) ,\y(t)  \big\rangle \d t .
		\end{align*}
	\end{corollary}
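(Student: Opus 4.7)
The plan is to reduce Corollary \ref{convergence_c3_1} to Lemma \ref{convergence_c2_1} via a density argument that approximates $\y$ by a continuous, compactly supported test function $\psi$ satisfying the hypotheses of that lemma. The proof will mirror the structure of the proof of Corollary \ref{convergence_b2} already given, with the trilinear form $b(\cdot,\cdot,\cdot)$ replaced by the nonlinear operator $\mathcal{C}(\cdot)$, and it will split naturally into the cases $r\in[1,3)$ and $r\geq3$ according to the hypothesis on $\y$.

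I would proceed in three steps. First, fix $\epsilon>0$ and, using density of smooth divergence-free compactly supported vector fields in $\mathrm{L}^{r+1}(0,T;\wi\L^{r+1})\cap\mathrm{L}^2(0,T;\H)$ (and in $\mathrm{L}^4(0,T;\wi\L^4)\cap\mathrm{L}^2(0,T;\H)$ when $r\in[1,3)$), choose $\psi$ satisfying the assumptions of Lemma \ref{convergence_c2_1} with $\|\y-\psi\|_{\mathrm{L}^{r+1}(0,T;\wi\L^{r+1})}$ arbitrarily small. Second, split the difference as
\begin{align*}
\int_0^T\langle\mathcal{C}(\v_m+\y)-\mathcal{C}(\v+\y),\y\rangle\d t &= \int_0^T\langle\mathcal{C}(\v_m+\y)-\mathcal{C}(\v+\y),\y-\psi\rangle\d t \\
&\quad + \int_0^T\langle\mathcal{C}(\v_m+\y)-\mathcal{C}(\v+\y),\psi\rangle\d t,
\end{align*}
and observe that the second term vanishes as $m\to\infty$ by direct application of Lemma \ref{convergence_c2_1}, since all hypotheses on $\{\v_m\}$, $\v$, and $\y$ required by that lemma are inherited from the corollary. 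Third, estimate the first term via the triangle inequality and H\"older's inequality in space, $|\langle\mathcal{C}(\u),\y-\psi\rangle|\leq\|\u\|_{\wi\L^{r+1}}^{r}\|\y-\psi\|_{\wi\L^{r+1}}$, followed by H\"older in time with exponents $(r+1)/r$ and $r+1$, to obtain for $\u=\v_m+\y$ and $\u=\v+\y$ the bound $\|\u\|_{\mathrm{L}^{r+1}(0,T;\wi\L^{r+1})}^{r}\|\y-\psi\|_{\mathrm{L}^{r+1}(0,T;\wi\L^{r+1})}$. The leading factor is uniformly bounded in $m$, because $\v_m$ and $\v$ are bounded in $\mathrm{L}^{r+1}(0,T;\wi\L^{r+1})$ by the weak convergence hypothesis, and $\y$ belongs to that space directly (for $r\geq3$) or by Remark \ref{RemarkI} (for $r\in[1,3)$); hence the first term can be made smaller than $\epsilon$ uniformly in $m$ by the choice of $\psi$.

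The main obstacle is really bookkeeping rather than substance: one must arrange that a single $\psi$ simultaneously satisfies the pointwise support/boundedness requirements of Lemma \ref{convergence_c2_1} and closely approximates $\y$ in $\mathrm{L}^{r+1}(0,T;\wi\L^{r+1})$ in both regimes. The case $r\in[1,3)$ needs slight extra care, since $\y$ is only assumed in $\mathrm{L}^4\cap\mathrm{L}^2(0,T;\H)$ there; the required $\mathrm{L}^{r+1}$-control on $\y-\psi$ has to be extracted from the interpolation inequality as in Remark \ref{RemarkI}. No estimates beyond H\"older's inequality and interpolation, already exploited in the proof of Corollary \ref{convergence_b2}, are needed.
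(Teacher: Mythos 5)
Your proposal is correct and follows essentially the same route as the paper's proof: approximate $\y$ by a test function $\psi$ admissible for Lemma \ref{convergence_c2_1}, split the integral into the $\psi$-term (handled by that lemma) and the $\y-\psi$ remainder (handled by H\"older in space and time against the uniform $\mathrm{L}^{r+1}(0,T;\wi\L^{r+1})$ bound on $\v_m+\y$ and $\v+\y$), and invoke Remark \ref{RemarkI} to place $\y$ in $\mathrm{L}^{r+1}(0,T;\wi\L^{r+1})$ when $r\in[1,3)$. No substantive differences from the paper's argument.
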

	\begin{proof}
		From Remark \ref{RemarkI} , we infer that for $r\in[1,3]$, $\y \in \mathrm{L}^{2}(0,T;\H)\cap\mathrm{L}^{4}(0,T; \wi\L^{4})$ implies $\y \in\mathrm{L}^{r+1}(0,T; \wi\L^{r+1})$. Hence, $\y \in\mathrm{L}^{r+1}(0,T; \wi\L^{r+1}),$ for all $r\in[1,\infty)$. Moreover, we can find a constant $L>0$ such that $\|\v_m+\y\|^{r}_{\mathrm{L}^{r+1}(0,T;\widetilde{\L}^{r+1})}+\|\v+\y\|^{r}_{\mathrm{L}^{r+1}(0,T;\widetilde{\L}^{r+1})} \leq L.$ 	Let us choose an $\epsilon > 0$. Since $\y \in \mathrm{L}^{r+1} (0, T; \widetilde{\L}^{r+1}),$ for all $r\in[1,\infty)$,  by a standard regularization method, we can find a function $\psi_1$ satisfying the assumptions of  Lemma \ref{convergence_c2_1} such that $\left(\int_{0}^{T} \|\y(t)- \psi_1(t)\|^{r+1}_{\widetilde{\L}^{r+1}}\d t\right)^{\frac{1}{r+1}} < \frac{\epsilon}{3L}$. Hence, by Lemma \ref{convergence_c2_1}, we can find $M_{\epsilon} \in \mathbb{N}$ such that   $$\left|\int_{0}^{T} \big\langle\mathcal{C}(\v_m(t)+\y(t)) ,\psi_1(t)  \big\rangle \d t - \int_{0}^{T} \big\langle\mathcal{C}(\v(t)+\y(t)) ,\psi_1(t)  \big\rangle \d t\right| < \frac{\epsilon}{3},$$  for all $m \geq M_{\epsilon}.$ Hence,  for $m > M_{\epsilon}$, we have
		\begin{align*}
			&\left|\int_{0}^{T} \big\langle\mathcal{C}(\v_m(t)+\y(t)) ,\y(t)  \big\rangle \d t - \int_{0}^{T} \big\langle\mathcal{C}(\v(t)+\y(t)) ,\y(t)  \big\rangle \d t\right| \\
			&	\leq  \int_{0}^{T}\left| \big\langle\mathcal{C}(\v_m(t)+\y(t)) ,\y(t) - \psi_1(t)  \big\rangle\right|  \d t+ \int_{0}^{T} \left|\big\langle\mathcal{C}(\v(t)+\y(t)) ,\y(t) - \psi_1(t)  \big\rangle\right| \d t \\&\quad+ \left|\int_{0}^{T} \big\langle\mathcal{C}(\v_m(t)+\y(t)) ,\psi_1(t) \big\rangle \d t - \int_{0}^{T} \big\langle\mathcal{C}(\v(t)+\y(t)) ,\psi_1(t)  \big\rangle \d t\right|\\
			&	<  \frac{\epsilon}{3} + \int_{0}^{T} \left[\|\v_m(t)+\y(t)\|^r_{\widetilde{\L}^{r+1}}  + \|\v(t)+\y(t)\|^r_{\widetilde{\L}^{r+1}}\right] \|\y(t)-\psi(t)_1\|_{\widetilde{\L}^{r+1}}\d t \\
			&	\leq  \frac{\epsilon}{3}+ \left(\|\v_m+\y\|^{r}_{\mathrm{L}^{r+1}(0,T;\widetilde{\L}^{r+1})} +\|\v+\y\|^{r}_{\mathrm{L}^{r+1}(0,T;\widetilde{\L}^{r+1})}\right)\|\y-\psi_1\|_{\mathrm{L}^{r+1}(0,T;\widetilde{\L}^{r+1})}	\nonumber\\&<  \frac{\epsilon}{3}+\frac{2\epsilon}{3}=\epsilon,
		\end{align*}
		which completes the proof for all $r\in[1,\infty)$.
	\end{proof}
	\begin{theorem}[\cite{Mohan}]\label{LocMon}
		Let $d=2$ with $ r\in[1, 3]$, $d=2,3$ with $ r> 3$, $d=r=3$ with $2\beta\mu\geq1$ and $\u_1, \u_2 \in \V\cap\wi\L^{r+1}.$ Then, for the operator $\G(\u) = \mu\A\u +\B(\u)+\alpha\u+\beta\mathcal{C}(\u),$ we have 
		\begin{align}\label{fe2_1}
			\langle\G(\u_1)-\G(\u_2),\u_1-\u_2\rangle+ \frac{27}{32\mu ^3}\|\u_2\|^4_{\widetilde{\L}^4}\|\u_2-\u_2\|_{\H}^2&\geq 0, \text{ for } d=2 \text{ with } r\in[1,3],
		\end{align}
		\begin{align}\label{fe2_2}
			\langle\G(\u_1)-\G(\u_2),\u_1-\u_2\rangle+ \eta\|\u_2-\u_2\|_{\H}^2&\geq 0, \text{ for } d=2,3 \text{ with } r>3,
		\end{align}
		where $\eta=\frac{r-3}{2\mu(r-1)}\left(\frac{2}{\beta\mu (r-1)}\right)^{\frac{2}{r-3}}$ and 
		\begin{align}\label{fe2_3}
			\langle\G(\u_1)-\G(\u_2),\u_1-\u_2\rangle \geq 0, \text{ for } d=r=3 \text{ with } 2\beta\mu\geq1.
		\end{align} 
	\end{theorem}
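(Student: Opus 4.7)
Write $\w = \u_1 - \u_2$ and expand
\[
\langle\G(\u_1)-\G(\u_2),\w\rangle = \mu\|\w\|_{\V}^2 + \langle\B(\u_1)-\B(\u_2),\w\rangle + \alpha\|\w\|_{\H}^2 + \beta\langle\mathcal{C}(\u_1)-\mathcal{C}(\u_2),\w\rangle.
\]
Three of the four pieces are automatically non-negative: the Stokes and damping terms by direct computation, and the Forchheimer term by the pointwise monotonicity estimate \eqref{MO_c}, which in fact yields the stronger lower bound $\tfrac{\beta}{2}\||\u_1|^{(r-1)/2}\w\|_{\H}^2 + \tfrac{\beta}{2}\||\u_2|^{(r-1)/2}\w\|_{\H}^2$. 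Using the identity $\B(\u_1,\u_1)-\B(\u_2,\u_2) = \B(\w,\u_1) + \B(\u_2,\w)$ together with the cancellation $b(\u_2,\w,\w)=b(\u_1,\w,\w)=0$ from \eqref{b0}, the convection difference reduces to $\langle\B(\u_1)-\B(\u_2),\w\rangle = b(\w,\u_2,\w) = -b(\w,\w,\u_2)$. The whole problem therefore reduces to bounding $|b(\w,\w,\u_2)|$ against the available dissipation in each of the three regimes.

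For \eqref{fe2_1} (d=2, $r\in[1,3]$), apply H\"older and the 2D Ladyzhenskaya inequality \eqref{lady} to get $|b(\w,\w,\u_2)| \le \|\w\|_{\wi\L^4}\|\nabla\w\|_{\H}\|\u_2\|_{\wi\L^4} \le 2^{1/4}\|\w\|_{\H}^{1/2}\|\nabla\w\|_{\H}^{3/2}\|\u_2\|_{\wi\L^4}$, then a weighted Young inequality with exponents $(4/3,4)$ splits this between $\mu\|\w\|_{\V}^2$ and a multiple of $\|\u_2\|_{\wi\L^4}^4\|\w\|_{\H}^2$. Choosing the weight so that the viscous term is fully absorbed produces a constant of the form $C(\mu)/\mu^3$; the Forchheimer dissipation is not needed here (it is not strong enough to help for small $r$ anyway), so \eqref{fe2_1} follows.

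For \eqref{fe2_2} (d=2,3, $r>3$), the key is the pointwise Young decomposition
\[
|\u_2|^2|\w|^2 \;=\; \bigl(|\u_2|^{r-1}|\w|^2\bigr)^{2/(r-1)}\bigl(|\w|^2\bigr)^{(r-3)/(r-1)},
\]
which combined with H\"older and Cauchy--Schwarz gives
\[
|b(\w,\w,\u_2)| \;\le\; \|\nabla\w\|_{\H}\,\||\u_2|^{(r-1)/2}\w\|_{\H}^{2/(r-1)}\,\|\w\|_{\H}^{(r-3)/(r-1)}.
\]
Apply Young's inequality once to split off $\|\nabla\w\|_{\H}$ (absorbed by $\mu\|\w\|_{\V}^2$), and again with exponents $((r-1)/2,(r-1)/(r-3))$ to split the remaining product between a fraction of $\||\u_2|^{(r-1)/2}\w\|_{\H}^2$ (absorbed by half of the Forchheimer dissipation from \eqref{MO_c}) and a multiple of $\|\w\|_{\H}^2$. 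Calibrating the weight so that the Forchheimer coefficient is exactly $\beta/2$ pins down the residual constant as
$\tfrac{r-3}{2\mu(r-1)}\bigl(\tfrac{2}{\beta\mu(r-1)}\bigr)^{2/(r-3)}=\eta$, yielding \eqref{fe2_2}.

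For \eqref{fe2_3} (d=r=3, $2\beta\mu\ge1$), the $r=3$ case of the same estimate reads $|b(\w,\w,\u_2)| \le \|\nabla\w\|_{\H}\||\u_2|\w\|_{\H} \le \mu\|\nabla\w\|_{\H}^2 + \tfrac{1}{4\mu}\||\u_2|\w\|_{\H}^2$. The first summand is killed by $\mu\|\w\|_{\V}^2$, and the second by $\tfrac{\beta}{2}\||\u_2|\w\|_{\H}^2$ precisely under the condition $\tfrac{\beta}{2}\ge\tfrac{1}{4\mu}$, i.e.\ $2\beta\mu\ge1$. The $\alpha\|\w\|_{\H}^2$ and $\tfrac{\beta}{2}\||\u_1|\w\|_{\H}^2$ contributions remain non-negative, giving \eqref{fe2_3} with no correction term. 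The main difficulty is the balancing act in case 2: one has to choose two Young weights in tandem so that $\mu\|\w\|_{\V}^2$ and half of the Forchheimer dissipation are both just barely saturated, which is what forces the specific form of $\eta$.
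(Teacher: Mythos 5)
Your argument is correct and is essentially the standard proof of this result (the paper itself does not prove Theorem \ref{LocMon} but imports it from \cite{Mohan}; the argument there is the same decomposition into Stokes, damping and monotone Forchheimer parts via \eqref{MO_c}, with the convection residual $b(\w,\w,\u_2)$ controlled by Ladyzhenskaya--H\"older interpolation and Young's inequality in each regime). One small slip: the identity you wrote, $\B(\u_1)-\B(\u_2)=\B(\w,\u_1)+\B(\u_2,\w)$, leaves the residual $b(\w,\u_1,\w)$ after the cancellations from \eqref{b0}, so to land on $b(\w,\u_2,\w)=-b(\w,\w,\u_2)$ as the statement requires you should use the other (equally valid) splitting $\B(\u_1)-\B(\u_2)=\B(\u_1,\w)+\B(\w,\u_2)$; the remaining estimates go through, and the fact that your Young calibrations produce constants at most as large as $\tfrac{27}{32\mu^3}$ and $\eta$ only strengthens the claimed inequalities.
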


	\subsection{A compact operator}(See \cite[Subsection 2.3]{BM} for more details)\label{C_O}
	Consider the natural embedding $j:\V\hookrightarrow\H$ and its adjoint $j^*:\H\hookrightarrow\V$. Since the range of $j$ is dense in $\H$, the map $j^*$ is one-to-one. Let us define
	\begin{align}\label{L1}
		\D(\mathcal{A})&:=j^*(\H)\subset\V,\nonumber \\
		\mathcal{A}\u&:=(j^*)^{-1}\u, \ \ \u\in\D(\mathcal{A}).
	\end{align}
Note that for all $\u\in\D(\mathcal{A})$ and $\v\in\V$
\begin{align*}
	(\mathcal{A}\u,\v)_{\H}=(\u,\v)_{\V}.
\end{align*}
Let us assume that $s>2$. It is clear that $\V_s$ is dense in $\V$ and the embedding $j_s:\V_s\hookrightarrow\V$ is continuous. Then, there exists a Hilbert space $\mathbb{U}$ (cf. \cite{HW}, \cite[Lemma C.1]{BM}) such that $\mathbb{U}\subset\V_s$, $\mathbb{U}$ is dense in $\V_s$ and 
\begin{align*}
	\text{  the natural embedding }\iota_s:\mathbb{U}\hookrightarrow\V_s \text{ is compact.}
\end{align*}
It implies that 
\begin{align*}
	\mathbb{U} \xhookrightarrow[\iota_s]{} \V_s\xhookrightarrow[j_s]{}\V\xhookrightarrow[j]{}\H\cong{\H}{'}\xhookrightarrow[j']{}\V'\xhookrightarrow[j'_s]{}  {\V}'_{s}\xhookrightarrow[\iota'_s]{}\mathbb{U}'.
\end{align*}
Consider the composition $$\iota:=j\circ j_s\circ\iota_s:\mathbb{U}\to\H$$ and its adjoint $$\iota^*:=(j\circ j_s\circ\iota_s)^*=\iota_s^*\circ j^*_s\circ j^*:\H\to \mathbb{U}.$$ We have that $\iota$ is compact and since its range is dense in $\H$, $\iota^*:\H\to\mathbb{U}$ is one-one. Let us define 
\begin{align}\label{L2}
	\D(\mathcal{L})&:=\iota^*(\H)\subset\mathbb{U},\nonumber\\
	\mathcal{L}\u&:=(\iota^*)^{-1}\u, \ \ \u\in\D(\mathcal{L}).
\end{align}
Also we have that $\mathcal{L}:\D(\mathcal{L})\to\H$ is onto, $\D(\mathcal{L})$ is dense in $\H$ and 
\begin{align*}
	(\mathcal{L}\u,\w)_{\H}=(\u,\w)_{\mathbb{U}}, \ \ \ \ \u\in\D(\mathcal{L}), \ \w\in\mathbb{U}.
\end{align*}
Furthermore, for $\u\in\D(\mathcal{L})$,
\begin{align*}
	\mathcal{L}\u=(\iota^*)^{-1}\u=(j^*)^{-1}\circ (j^*_s)^{-1}\circ(\iota^*_s)^{-1}\u=\mathcal{A}\circ (j^*_s)^{-1}\circ(\iota^*_s)^{-1}\u,
\end{align*}
where $\mathcal{A}$ is defined in \eqref{L1}. Since the operator $\mathcal{L}$ is self-adjoint and $\mathcal{L}^{-1}$ is compact, there exists an orthonormal basis $\{\boldsymbol{e}_i\}_{i\in\N}$ of $\H$ such that 
\begin{align}\label{L3}
	\mathcal{L}\boldsymbol{e}_i=\mu_i\boldsymbol{e}_i,\ \ \ \ i\in\N,
\end{align}
that is, $\boldsymbol{e}_i$ are the eigenfunctions $\mu_i$ are the corresponding eigenvalues of operator $\mathcal{L}$. Note that $\boldsymbol{e}_i\in\mathbb{U}$, \ $i\in\N$, because $\D(\mathcal{L})\subset \mathbb{U}$. 

Let us fix $m\in\N$ and let $\P_m$ be the operator from $\mathbb{U}'$ to $\mathrm{span}\{\boldsymbol{e}_1,\ldots,\boldsymbol{e}_m\}=:\H_{m}$ defined by 
\begin{align}
	\P_m\u^*:=\sum_{i=1}^{m}\langle\u^*,\boldsymbol{e}_i\rangle_{\mathbb{U}'\times\mathbb{U}}\boldsymbol{e}_i, \ \ \ \ \ \ \u^*\in\mathbb{U}'.
\end{align}
We will consider the restriction of operator $\P_m$ to the space $\H$ denoted still by the  same. In particular, we have $\H\hookrightarrow\mathbb{U}'$, that is, every element $\u\in\H$ induces a functional $\u^*\in\mathbb{U}'$ by 
\begin{align}
	\langle\u^*,\v\rangle_{\mathbb{U}'\times\mathbb{U}}:=(\u,\v), \ \ \ \ \v\in\mathbb{U}.
\end{align}
Thus the restriction of $\P_m$ to $\H$ is given by 
\begin{align}
	\P_m\u:=\sum_{i=1}^{m}(\u,\boldsymbol{e}_i)\boldsymbol{e}_i, \ \ \ \ \ \ \u\in\H.
\end{align}
Hence particularly, $\P_m$ is the orthogonal projection from $\H$ onto $\H_m$. 
\begin{lemma}[{\cite[Lemma 2.4]{BM}}]
	For every $\u\in\mathbb{U}$ and $s>2$, we have 
	\begin{itemize}
	\item [(i)] $\lim\limits_{m\to\infty}\|\P_m\u-\u\|_{\mathbb{U}}=0$,
	\item [(ii)] $\lim\limits_{m\to\infty}\|\P_m\u-\u\|_{\mathbb{V}_s}=0$, 
	\item [(iii)] $\lim\limits_{m\to\infty}\|\P_m\u-\u\|_{\mathbb{V}}=0$.
	\end{itemize}
\end{lemma}

	\subsection{Stochastic convective Brinkman-Forchheimer equations}
	In this subsection, we provide an abstract formulation of the system \eqref{SCBF} and assumptions on the noise.
	On taking projection $\mathcal{P}$ onto the first equation in \eqref{SCBF}, we obtain 
	\begin{equation}\label{S-CBF}
		\left\{
		\begin{aligned}
			\d\u(t)+\{\mu \A\u(t)+\B(\u(t))+\alpha\u(t)+\beta \mathcal{C}(\u(t))\}\d t&=\f \d t + \d\mathrm{W}(t), \ \ \ t> 0, \\ 
			\u(0)&=\boldsymbol{x},
		\end{aligned}
		\right.
	\end{equation}
 where $\boldsymbol{x}\in \H,\ \f\in \V'$ and $\{\mathrm{W}(t), \ t\in \R\},$ is a two-sided cylindrical Wiener process in $\H$ with its RKHS $\mathrm{K}. $  	For $d=2$ with $r\in[1,\infty)$, $d=3$ with $r\in[3,\infty)$ and $d=r=3$ with $2\beta\mu\geq1$,  RKHS $\mathrm{K}$   satisfies the following Assumptions \ref{assump1} (for $r\in [1,3)$) and \ref{assump2} (for $r\geq3$): 
	\begin{assumption}\label{assump1}
		For $r\in[1,3)$,	$ \mathrm{K} \subset \H \cap \widetilde{\L}^{4} $ is a Hilbert space such that for some $\delta\in (0, 1/2),$
		\begin{align}\label{A1}
			\A^{-\delta} : \mathrm{K} \to \H \cap \widetilde{\L}^{4}  \  \text{ is }\ \gamma \text{-radonifying.}
		\end{align}
	\end{assumption}
	\begin{assumption}\label{assump2}
		For $r\geq3$,	$ \mathrm{K} \subset \H \cap \widetilde{\L}^{r+1} $ is a Hilbert space such that for some $\delta\in (0, 1/2),$
		\begin{align}\label{A2}
			\A^{-\delta} : \mathrm{K} \to \H \cap \widetilde{\L}^{r+1} \   \text{ is }\ \gamma \text{-radonifying}.
		\end{align}
	\end{assumption}
	\begin{remark}
		1.	Let $\mathrm{K}$ be a separable Hilbert space and $\mathrm{X}$ be a separable Banach space. We denote by $\gamma(\mathrm{K}, \mathrm{X}),$ the completion of the finite rank operators from $\mathrm{K}$ to $\mathrm{X}$ with respect to the norm 
		\begin{align}\label{gamma}
			\left\|\sum_{i=1}^{k} h_i \otimes x_i  \right\|_{\gamma(\mathrm{K}, \mathrm{X})} := \left(\mathbb{E} \left\|\sum_{i=1}^{k} \gamma_i x_i  \right\|_{\mathrm{X}}^2\right)^{1/2},
		\end{align}
		where $h_1, \ldots, h_k$ are orthogonal in $\mathrm{K}$ and $\{\gamma_i\}_{i\geq 1}$ is a sequence of independent standard Gaussian random variables defined on some probability space $(\Omega, \mathcal{F}, \mathbb{P}).$ A linear operator $L: \mathrm{K} \to \mathrm{X}$ belongs to $\gamma(\mathrm{K}, \mathrm{X})$ is called \emph{$\gamma$-radonifying}. Also, $\gamma(\mathrm{K}, \mathrm{X})$ is a separable Banach space as well as an operator ideal (cf. \cite{BH}).
		
		2. The conditions \eqref{A1} and \eqref{A2} mean that the operator $\A^{-\delta}:\mathrm{K}\to\H$ is Hilbert-Schmidt and $\A^{-\delta}:\mathrm{K}\to  \H\cap\wi\L^{r+1}$ ($r\geq 3$) is $\gamma$-radonifying. 
		
		3. Because $\A^{-s}$ is bounded operator in $\H\cap\widetilde{\L}^{r+1}$ ($r\geq3$), for $s>0$, if the conditions \eqref{A1} and \eqref{A2} are satisfied for some $\delta_1$, then it is also satisfied for any $\delta_2\geq\delta_1$ (using ideal property).
		
		4. Let us fix $p\in(1,\infty)$. Let $(X_i, \mathcal{A}_i, \nu_i),\ i=1,2,$ be $\sigma$-finite measure spaces. A bounded linear operator $R:\mathrm{L}^2(X_1)\to\mathrm{L}^p(X_2)$ is $\gamma$-radonifying if and only if  there exists  a measurable function $\kappa:X_1\times X_2\to \R$ such that $\int_{X_2}\big[\int_{X_1}|\kappa(x_1,x_2)|^2\d\nu_1(x_1)\big]^{p/2}\d\nu_2(x_2)<\infty,$ and for all $\nu_2$-almost  all $x_2\in X_2, (R(f))(x_2)=\int_{X_1} \kappa(x_1,x_2)f(x_1)\d\nu_1(x_1), f\in \mathrm{L}^2(X_1)$ (cf. \cite[Theorem 2.3]{BN}). Thus, it can be easily seen that if $\mathcal{O}$ is a bounded domain, then $\A^{-s}:\H\to\widetilde{\L}^p$ is $\gamma$-radonifying if and only if $\int_{\mathcal{O}}\big[\sum_{j} \lambda_j^{-2s}|e_j(x)|^2\big]^{p/2}\d x<\infty,$ where $\{e_j\}$ is an orthogonal basis of $\H$ and $\A e_j=\lambda_j e_j, j\in \N.$ 
		\begin{itemize}
			\item [(i)] In 2D bounded domains, we know that $\lambda_j\sim j$ and hence $\A^{-s}$ is $\gamma$-radonifying if and only if $s>\frac{1}{2}.$ In other words, with $\mathrm{K}=\D(\A^{s}),$ the embedding $\mathrm{K}\hookrightarrow\H\cap\widetilde{\L}^{r+1} (r\geq3)$ is $\gamma$-radonifying if and only if $s>\frac{1}{2}.$ Thus, Assumptions \ref{assump1} and  \ref{assump2} are satisfied for any $\delta>0.$ In fact, the conditions \eqref{A1} (for $r\in[1,3)$) and \eqref{A2} (for $r\geq3$) hold if and only if the operator $\A^{-(s+\delta)}:\H\to \H\cap\widetilde{\L}^{r+1}$ is $\gamma$-radonifying.
			\item [(ii)] In 3D bounded domains, we know that $\lambda_j\sim j^{2/3},$ for large $j$ (growth of eigenvalues, see \cite{FMRT}) and hence $\A^{-s}$ is Hilbert-Schmidt if and only if $s>\frac{3}{4}.$ In other words, with $\mathrm{K}=\D(\A^{s}),$ the embedding $\mathrm{K}\hookrightarrow\H\cap\widetilde{\L}^{r+1} (r\geq3)$ is $\gamma$-radonifying if and only if $s>\frac{3}{4}.$ Thus, Assumption \ref{assump2} is satisfied for any $\delta>0.$ In fact, the condition \eqref{A1} holds if and only if the operator $\A^{-(s+\delta)}:\H\to \H\cap\widetilde{\L}^{r+1} (r\geq3)$ is $\gamma$-radonifying.
		\end{itemize}

		5. The requirement of  $\delta<\frac{1}{2}$ in Assumptions \ref{assump1} and \ref{assump2} is necessary because we need (see subsection \ref{O-Up}) the corresponding Ornstein-Uhlenbeck process to take values in $\H\cap\widetilde{\L}^{4}$ and $\H\cap\widetilde{\L}^{r+1}$, for $r\in[1,3)$ and $r\geq 3$,  respectively.
	\end{remark}

	\section{RDS generated by SCBF equations}\label{sec5}\setcounter{equation}{0} In this section, we discuss the random dynamical system generated by SCBF equations. Let us represent $\mathrm{X}_1 = \H \cap \widetilde{\mathbb{L}}^{4} $ and $\mathrm{X}_2 = \H \cap \widetilde{\mathbb{L}}^{r+1} $. Let $\mathrm{E}_i$ denote the completion of $\A^{-\delta}(\mathrm{X}_i)$ with respect to the graph norm $\|x_i\|_{\mathrm{E}_i}=\|\A^{-\delta} x_i\|_{\mathrm{X}_i}, \text{ for } x_i\in \mathrm{X}_i , \ i\in\{1,2\}$,  where $ \|\cdot\|_{\mathrm{X}_1} = \|\cdot\|_{\H} +  \|\cdot\|_{\widetilde{\L}^4 }  \text{ and } \|\cdot\|_{\mathrm{X}_2} = \|\cdot\|_{\H} + \|\cdot\|_{\widetilde{\L}^{r+1} }.$ Note that $\mathrm{E}_1$ and $\mathrm{E}_2$ are separable Banach spaces (cf. \cite{Brze2}).
	
	For $\xi \in(0, 1/2)$, let us set 
	$$ \|\omega\|_{C^{\xi}_{1/2} (\mathbb{R}, \mathrm{E}_i)} = \sup_{t\neq s \in \mathbb{R}} \frac{\|\omega(t) - \omega(s)\|_{\mathrm{E}_i}}{|t-s|^{\xi}(1+|t|+|s|)^{1/2}}, \ i\in\{1,2\}.$$
	Furthermore, we define
	\begin{align*}
		C^{\xi}_{1/2} (\mathbb{R}, \mathrm{E}_i) &= \left\{ \omega \in C(\mathbb{R}, \mathrm{E}_i) : \omega(0)=0,\ \|\omega\|_{C^{\xi}_{1/2} (\mathbb{R}, \mathrm{E}_i)} < \infty \right\},\\ \Omega(\xi, \mathrm{E}_i)&=\text{the closure of }\  \{ \omega \in C^\infty_0 (\mathbb{R}, \mathrm{E}_i) : \omega(0) = 0 \}\ \text{ in } \ C^{\xi}_{1/2} (\mathbb{R}, \mathrm{E}_i).
	\end{align*}
	The space $\Omega(\xi, \mathrm{E}_i)$ is a separable Banach space. We also define
	$$C_{1/2} (\mathbb{R}, \mathrm{E}_i) = \left\{ \omega \in C(\mathbb{R}, \mathrm{E}_i) : \omega(0)=0, \|\omega\|_{C_{1/2} (\mathbb{R}, \mathrm{E}_i)} = \sup_{t \in \mathbb{R}} \frac{\|\omega(t) \|_{\mathrm{E}_i}}{1+|t|^{1/2}} < \infty \right\}.$$

	Let us  denote $\mathcal{F}_i$ for the Borel $\sigma$-algebra on $\Omega(\xi, \mathrm{E}_i).$ For $\xi\in (0, 1/2)$, there exists a Borel probability measure $\mathbb{P}_i$ on $\Omega(\xi, \mathrm{E}_i)$ (cf. \cite{Brze}) such that the canonical process $\{w^i_t, \ t\in \mathbb{R}\}$ is defined by 
	\begin{align}\label{Wp}
		w^i_t(\omega) := \omega(t), \ \ \ \omega \in \Omega(\xi, \mathrm{E}_i),
	\end{align}
	is an $\mathrm{E}_i$-valued two sided Wiener process such that the RKHS of the Gaussian measure $\mathscr{L}(w_1)$ on $\mathrm{E}_i$ is $\mathrm{K}$. For $t\in \mathbb{R},$ let $\mathcal{F}^i_t := \sigma \{ w^i_s : s \leq t \}.$ Then  there exists a unique bounded linear map $\mathrm{W}^i(t): \mathrm{K} \to \mathrm{L}^2(\Omega(\xi, \mathrm{E}_i), \mathcal{F}^i_t  ,  \mathbb{P}_i).$ Moreover, the family $(\mathrm{W}^i(t))_{t\in \mathbb{R}}$ is a $\mathrm{K}$-cylindrical Wiener process on a filtered probability space $(\Omega(\xi, \mathrm{E}_i), \mathcal{F}_i, (\mathcal{F}^i_t)_{t \in \mathbb{R}} , \mathbb{P}_i)$ (cf. \cite{BP} for more details).
	
	We consider a flow $\theta = (\theta_t)_{t\in \mathbb{R}}$, on the space $C_{1/2} (\mathbb{R}, \mathrm{E}_i),$  defined by
	$$ \theta_t \omega(\cdot) = \omega(\cdot + t) - \omega(t), \ \ \ \omega\in C_{1/2} (\mathbb{R}, \mathrm{E}_i), \ \ t\in \mathbb{R}.$$ 
	This flow keeps the spaces $C^{\xi}_{1/2} (\mathbb{R}, \mathrm{E}_i)$ and $\Omega(\xi, \mathrm{E}_i)$ invariant and preserves $\mathbb{P}_i.$

	\subsection{Analytic preliminaries} Let us first recall some analytic preliminaries from \cite{BL} which will help us to define an Ornstein-Uhlenbeck process and all the results of this subsection are valid for the space $C^{\xi}_{1/2} (\mathbb{R}, \mathrm{Y})$ replaced by $\Omega(\xi, \mathrm{Y}).$ 
	\begin{proposition}[{\cite[Proposition 2.11]{BL}}]\label{Ap}
		Let us assume that $A$ is the generator of an analytic semigroup $\{e^{-tA}\}_{t\geq 0}$ on a separable Banach space $\mathrm{Y}$ such that for some $C>0\ \text{and}\ \gamma>0$
		\begin{align}
			\| A^{1+\delta}e^{-tA}\|_{\mathfrak{L}(\mathrm{Y})} \leq C t^{-1-\delta} e^{-\gamma t}, \ \ t\geq 0,
		\end{align}
	where $\mathfrak{L}(\mathrm{Y})$ denotes the space of all bounded linear operators from $\mathrm{Y}$ to $\mathrm{Y}$.	For $\xi \in (\delta, 1/2)$ and $\widetilde{\omega} \in  C^{\xi}_{1/2} (\mathbb{R}, \mathrm{Y}),$ we define 
		\begin{align}
			\hat{z}(t) = \hat{z} (A; \widetilde{\omega})(t) := \int_{-\infty}^{t} A^{1+\delta} e^{-(t-r)A} (\widetilde{\omega}(t) - \widetilde{\omega}(r))\d r, \ \ t\in \mathbb{R}.
		\end{align}
		If $t\in \mathbb{R},$ then $\hat{z}(t)$ is a well-defined element of $\mathrm{Y}$ and the mapping 
		$$C^{\xi}_{1/2} (\mathbb{R}, \mathrm{Y}) \ni \widetilde{\omega}  \mapsto \hat{z}(t) \in \mathrm{Y} $$
		is continuous. Moreover, the map $\hat{z} :  C^{\xi}_{1/2} (\mathbb{R}, \mathrm{Y}) \to  C_{1/2} (\mathbb{R}, \mathrm{Y})$  is well defined, linear and bounded. In particular, there exists a constant $C < \infty$ such that for any $\widetilde{\omega} \in C^{\xi}_{1/2} (\mathbb{R}, \mathrm{Y})$ 
		\begin{align}\label{X_bound_of_z}
			\|\hat{z}(\widetilde{\omega})(t)\|_{\mathrm{Y}} \leq C(1 + |t|^{1/2})\|\widetilde{\omega}\|_{C^{\xi}_{1/2} (\mathbb{R}, \mathrm{Y})}, \ \ \ t \in \R.
		\end{align}
		Furthermore, under the same assumption, following results hold (Corollary 6.4, Theorem 6.6, Corollary 6.8 in \cite{BL}):
		\begin{itemize}
			\item [1.]For all $-\infty<a<b<\infty$ and $t\in \R$, the map 
			\begin{align}\label{O-U_conti}
				C^{\xi}_{1/2} (\mathbb{R}, \mathrm{Y}) \ni \widetilde{\omega} \mapsto (\hat{z}(\widetilde{\omega})(t), \hat{z}(\widetilde{\omega})) \in \mathrm{Y} \times \mathrm{L}^{q} (a, b; \mathrm{Y})
			\end{align}
			where $q\in [1, \infty]$, is continuous.
			\item [2.] For any $\omega \in C^{\xi}_{1/2} (\mathbb{R}, \mathrm{Y}),$
			\begin{align}\label{stationary}
				\hat{z}(\theta_s \omega)(t) = \hat{z}(\omega)(t+s), \ \ t, s \in \mathbb{R}.
			\end{align}
			\item [3.] For $\zeta \in C_{1/2}(\R, \mathrm{Y}),$ if we put $(\tau_s\zeta(t))=\zeta(t+s), \ t,s \in \R,$ then, for $t \in \R ,\  \tau_s \circ \hat{z} = \hat{z}\circ\theta_s$, that is, 
			\begin{align}\label{IS}
				\tau_s\big(\hat{z}(\omega)\big)= \hat{z}\big(\theta_s(\omega)\big), \ \ \ \omega\in C^{\xi}_{1/2} (\mathbb{R}, \mathrm{Y}).
			\end{align}
		\end{itemize} 
	\end{proposition}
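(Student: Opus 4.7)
The plan is to treat the defining integral as an absolutely convergent Bochner integral in $\mathrm{Y}$, establish the pointwise bound \eqref{X_bound_of_z} by a careful split of the weight $(1+|t|+|r|)^{1/2}$, and then read off linearity, continuity, and the time-shift identities directly from the formula.

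For well-definedness, I would combine the semigroup estimate $\|A^{1+\delta}e^{-(t-r)A}\|_{\mathcal{L}(\mathrm{Y},\mathrm{Y})}\leq C(t-r)^{-1-\delta}e^{-\gamma(t-r)}$ with the H\"older-type bound $\|\widetilde{\omega}(t)-\widetilde{\omega}(r)\|_{\mathrm{Y}}\leq (t-r)^{\xi}(1+|t|+|r|)^{1/2}\|\widetilde{\omega}\|_{C^{\xi}_{1/2}(\mathbb{R},\mathrm{Y})}$, producing an integrand majorized by $C(t-r)^{\xi-1-\delta}e^{-\gamma(t-r)}(1+|t|+|r|)^{1/2}\|\widetilde{\omega}\|_{C^{\xi}_{1/2}(\mathbb{R},\mathrm{Y})}$. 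The assumption $\xi>\delta$ makes the singularity at $r=t$ integrable (since $\xi-1-\delta>-1$), while the exponential factor $e^{-\gamma(t-r)}$ dominates the polynomial growth of $(1+|r|)^{1/2}$ as $r\to-\infty$, so the Bochner integral converges in $\mathrm{Y}$.

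The main obstacle is the bound \eqref{X_bound_of_z}: the weight $(1+|t|+|r|)^{1/2}$ threatens to produce $t$-growth worse than $(1+|t|^{1/2})$. To overcome this I would exploit the elementary inequality $(1+|t|+|r|)^{1/2}\leq (1+|t|)^{1/2}+(t-r)^{1/2}$ on $\{r\leq t\}$. After substituting $s=t-r$ the first summand contributes $(1+|t|)^{1/2}\int_0^{\infty} Cs^{\xi-1-\delta}e^{-\gamma s}\,ds$, a constant multiple of $(1+|t|)^{1/2}$; the second summand contributes $\int_0^{\infty} Cs^{\xi-1/2-\delta}e^{-\gamma s}\,ds$, again finite because $\xi-1/2-\delta>-1$. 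Collecting these and using the elementary estimate $(1+|t|)^{1/2}\leq \sqrt{2}\,(1+|t|^{1/2})$ yields \eqref{X_bound_of_z}.

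Linearity of $\hat{z}$ is immediate from its definition. Continuity of the evaluation map $\widetilde{\omega}\mapsto \hat{z}(\widetilde{\omega})(t)$ and of $\hat{z}\colon C^{\xi}_{1/2}(\mathbb{R},\mathrm{Y})\to C_{1/2}(\mathbb{R},\mathrm{Y})$ then follow by applying \eqref{X_bound_of_z} to the difference $\widetilde{\omega}_n-\widetilde{\omega}$ and invoking linearity. For the strengthened continuity in \eqref{O-U_conti}, the $\mathrm{Y}$-component is exactly the pointwise statement, while the $\mathrm{L}^q(a,b;\mathrm{Y})$-component follows from dominated convergence using the uniform majorant $C(1+|b|^{1/2})\|\widetilde{\omega}_n-\widetilde{\omega}\|_{C^{\xi}_{1/2}(\mathbb{R},\mathrm{Y})}$ on $[a,b]$. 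Finally, the identities \eqref{stationary} and \eqref{IS} follow from the substitution $r\mapsto r+s$ in the defining integral together with the identity $(\theta_s\omega)(t)-(\theta_s\omega)(r)=\omega(t+s)-\omega(r+s)$, which converts the integral over $(-\infty,t]$ evaluated at $\theta_s\omega$ and time $t$ into the integral over $(-\infty,t+s]$ evaluated at $\omega$ and time $t+s$.
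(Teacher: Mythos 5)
The paper does not prove this proposition at all: it is imported verbatim from Brze\'zniak--Li (Proposition 2.11 and Corollary 6.4, Theorem 6.6, Corollary 6.8 of \cite{BL}), so there is no in-paper argument to compare against. Your proposal reconstructs what is essentially the standard proof from \cite{BL}: absolute convergence of the Bochner integral from the analytic-semigroup bound combined with the weighted H\"older seminorm, the splitting of the weight $(1+|t|+|r|)^{1/2}$ into a part proportional to $(1+|t|)^{1/2}$ and a part absorbed into the time increment, and the change of variables $r\mapsto r+s$ together with $(\theta_s\omega)(t)-(\theta_s\omega)(r)=\omega(t+s)-\omega(r+s)$ for the cocycle identities \eqref{stationary} and \eqref{IS}. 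The overall structure is sound and each integral you exhibit does converge under $\xi\in(\delta,1/2)$.

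Two points need repair. First, the inequality $(1+|t|+|r|)^{1/2}\leq (1+|t|)^{1/2}+(t-r)^{1/2}$ is false as stated (take $t=100$, $r=99$: the left side is $\sqrt{200}\approx 14.1$, the right side is $\sqrt{101}+1\approx 11.1$). The correct route is $|r|\leq |t|+(t-r)$, hence $1+|t|+|r|\leq 2(1+|t|)+(t-r)$ and $(1+|t|+|r|)^{1/2}\leq \sqrt{2}\,(1+|t|)^{1/2}+(t-r)^{1/2}$; the extra $\sqrt{2}$ is harmless and the rest of your computation, including the exponents $\xi-1-\delta>-1$ and $\xi-\tfrac12-\delta>-1$, goes through unchanged. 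Second, to conclude that $\hat z$ maps into $C_{1/2}(\mathbb{R},\mathrm{Y})$ you must also show that $t\mapsto \hat z(\widetilde\omega)(t)$ is \emph{continuous} in $\mathrm{Y}$; the bound \eqref{X_bound_of_z} alone only controls the weighted sup-norm. This requires an additional argument (e.g., writing $\hat z(t')-\hat z(t)$ as a sum of an integral over $[t,t']$, which is small by the convergence estimate, and an integral over $(-\infty,t]$ handled by dominated convergence using the strong continuity of $s\mapsto A^{1+\delta}e^{-sA}$ for $s>0$ and the H\"older modulus of $\widetilde\omega$). Neither issue undermines the approach, but both should be addressed explicitly.
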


	\subsection{Ornstein-Uhlenbeck process}\label{O-Up}
	In this subsection, we define the Ornstein-Uhlenbeck processes under Assumptions \ref{assump1} and \ref{assump2}.	For $\delta$ as in Assumptions \ref{assump1} and \ref{assump2}, $\mu, \alpha, \beta > 0, \ \chi \geq 0, \ \xi \in (\delta, 1/2)$ and $ \omega \in C^{\xi}_{1/2} (\mathbb{R}, \mathrm{E}_i)$ (so that $(\mu \A + \chi I)^{-\delta}\omega \in C^{\xi}_{1/2} (\mathbb{R}, \mathrm{X}_i)$), we define $$ \z_{\chi}(\omega) := \hat{z}((\mu \A + \chi I); (\mu \A + \chi I)^{-\delta}\omega) \ \in C_{1/2}(\mathbb{R}, \mathrm{X}_i),$$ for $i=1,2$, that is, for any $t\geq 0,$ 
	\begin{align}\label{DOu1}
		\z_{\chi}(\omega)(t)&:=\int_{-\infty}^{t} (\mu \A + \chi I)^{1+\delta} e^{-(t-\tau)(\mu \A + \chi I)} [(\mu \A + \chi I)^{-\delta}\omega(t) - (\mu \A + \chi I)^{-\delta}\omega(\tau)]\d \tau  \nonumber\\
		&=\int_{-\infty}^{t} (\mu \A + \chi I)^{1+\delta} e^{-(t-\tau)(\mu \A + \chi I)} ((\mu \A + \chi I)^{-\delta}\theta_{\tau} \omega)(t-\tau)\d \tau.
	\end{align}
	For $\omega \in C^{\infty}_0 (\mathbb{R}, \mathrm{E}_i)$ with $\omega(0)= 0,$ using  integration by parts, we obtain 
	\begin{align*}
		\frac{\d\z_\chi(t)}{\d t} &= -(\mu \A + \chi I )\int_{-\infty}^{t} (\mu \A + \chi I)^{1+\delta} e^{-(t-r)(\mu \A + \chi I)} [(\mu \A + \chi I)^{-\delta}\omega(t) \\&\qquad\qquad - (\mu \A + \chi I)^{-\delta}\omega(r)]\d r +  \dot{\omega}(t).
	\end{align*}
	Thus $\z_{\chi}(\cdot)$ is the solution of the following equation:
	\begin{align}\label{OuE1}
		\frac{\d\z_{\chi} (t)}{\d t} + (\mu \A + \chi I)\z_{\chi}(t) = \dot{\omega} (t), \ \ t\in \mathbb{R}.
	\end{align}
	Therefore, from the definition of the space $\Omega(\xi, \mathrm{E}_i),$ we have 
	\begin{corollary}\label{Diff_z1}
		If $\chi_1, \chi_2 \geq 0,$ then the difference $\z_{\chi_1} - \z_{\chi_2}$ is a solution to 
		\begin{align}\label{Dif_z1}
			\frac{\d(\z_{\chi_1} - \z_{\chi_2})(t)}{\d t} + \mu\A(\z_{\chi_1} - \z_{\chi_2})(t) = -(\chi_1 \z_{\chi_1} - \chi_2\z_{\chi_2})(t), \ \ \ t \in \R.
		\end{align}
	\end{corollary}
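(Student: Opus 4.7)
The proof is essentially a direct subtraction of two instances of the Ornstein-Uhlenbeck equation \eqref{OuE1}, combined with a density argument to handle general $\omega$. I will proceed as follows.

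First, I would work with the case $\omega \in C^{\infty}_0(\R, \mathrm{E}_i)$ with $\omega(0) = 0$, for which \eqref{OuE1} has already been established by integration by parts in the text preceding the corollary. Writing \eqref{OuE1} twice, once with $\chi = \chi_1$ and once with $\chi = \chi_2$, gives
\begin{align*}
\frac{\d\z_{\chi_1}(t)}{\d t} + (\mu\A + \chi_1 I)\z_{\chi_1}(t) &= \dot{\omega}(t),\\
\frac{\d\z_{\chi_2}(t)}{\d t} + (\mu\A + \chi_2 I)\z_{\chi_2}(t) &= \dot{\omega}(t),
\end{align*}
and a termwise subtraction cancels $\dot{\omega}(t)$ and yields
\begin{equation*}
\frac{\d(\z_{\chi_1} - \z_{\chi_2})(t)}{\d t} + \mu\A(\z_{\chi_1} - \z_{\chi_2})(t) = -(\chi_1\z_{\chi_1} - \chi_2\z_{\chi_2})(t),
\end{equation*}
which is precisely \eqref{Dif_z1}.

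Next, to extend this identity to every $\omega \in \Omega(\xi, \mathrm{E}_i)$, I would invoke the density of $\{\omega \in C^{\infty}_0(\R, \mathrm{E}_i): \omega(0) = 0\}$ in $\Omega(\xi, \mathrm{E}_i)$ together with the continuity property \eqref{O-U_conti} of the map $\omega \mapsto \hat{z}(\omega)$ given in Proposition \ref{Ap}. Concretely, for a sequence $\omega^n$ of smooth compactly supported functions converging to $\omega$ in $C^{\xi}_{1/2}(\R, \mathrm{E}_i)$, both $\z_{\chi_1}(\omega^n)$ and $\z_{\chi_2}(\omega^n)$ converge (in $\mathrm{X}_i$ pointwise in $t$ and in $\mathrm{L}^q_{\mathrm{loc}}(\R; \mathrm{X}_i)$) to the corresponding processes with driving path $\omega$, so the equation passes to the limit in the sense of distributions (or as an equality in $\V'$ after integrating in time against a test function).

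The only delicate point is that for a general $\omega \in \Omega(\xi, \mathrm{E}_i)$ the time derivative $\dot{\omega}$ and hence the equation \eqref{OuE1} itself must be understood distributionally; however, the subtraction eliminates $\dot{\omega}$ entirely and produces an equation whose right-hand side $-(\chi_1\z_{\chi_1} - \chi_2\z_{\chi_2})$ lies in $C_{1/2}(\R, \mathrm{X}_i)$ by \eqref{X_bound_of_z}. Consequently \eqref{Dif_z1} may in fact be interpreted pointwise (or equivalently, in the integrated mild form), and this is the sense in which the corollary is asserted. The main (and only) subtlety is thus justifying the passage from smooth to rough $\omega$ via the continuity of the stochastic convolution map, which is already supplied by Proposition \ref{Ap}.
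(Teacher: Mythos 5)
Your proposal is correct and matches the paper's (implicit) argument: the paper derives \eqref{OuE1} for smooth compactly supported $\omega$ by integration by parts and then asserts the corollary ``from the definition of the space $\Omega(\xi,\mathrm{E}_i)$,'' which is precisely your subtraction of the two equations followed by the density/continuity extension via Proposition \ref{Ap}. You have simply spelled out the limiting step that the paper leaves implicit.
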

	
	According to the  definition \eqref{Wp} of Wiener process $\{w^i_t, \ t\in \R\},$ one can view the formula \eqref{DOu1} as a definition of a process $\{\z_{\chi}(t), \ t\in \R\},$ on the probability space $(\Omega(\xi, \mathrm{E}_i), \mathcal{F}_i, \mathbb{P}_i),$ for $i=1,2$. Equation \eqref{OuE1} clearly tells that the process $\z_{\chi}(\cdot)$ is an Ornstein-Uhlenbeck process. Furthermore, the following results hold for $\z_{\chi}(\cdot)$.
	\begin{proposition}[{\cite[Proposition 6.10]{BL}}]\label{SOUP1}
		The process $\{\z_{\chi}(t), \ t\in \mathbb{R}\},$ is a stationary Ornstein-Uhlenbeck process on $(\Omega(\xi, \mathrm{E}_i), \mathcal{F}_i, \mathbb{P}_i)$, for $i=1,2$. It is a solution of the equation 
		\begin{align}\label{OUPe1}
			\d\z_{\chi}(t) + (\mu \A + \chi I)\z_{\chi} \d t = \d\mathrm{W}(t), \ \ t\in \mathbb{R},
		\end{align}
		that is, for all $t\in \mathbb{R},$ $\mathbb{P}_i$-a.s.
		\begin{align}\label{oup1}
			\z_\chi (t) = \int_{-\infty}^{t} e^{-(t-\xi)(\mu \A + \chi I)} \d\mathrm{W}(\xi),
		\end{align}
		where the integral is an It\^o integral on the M-type 2 Banach space $\mathrm{X}_i$  (cf. \cite{Brze1}). 	In particular, for some $C$ depending on $\mathrm{X}_i$,
		\begin{align}\label{E-OUP1}
			\mathbb{E}\left[\|\z_{\chi} (t)\|^2_{\mathrm{X}_i} \right]&= \mathbb{E}\left[\left\|\int_{-\infty}^{t} e^{-(t-\xi)(\mu \A + \chi I)} \d\mathrm{W}(\xi)\right\|^2_{\mathrm{X}_i}\right] \leq C\int_{-\infty}^{t} \|e^{-(t-\xi)(\mu \A +  \chi I)}\|^2_{\gamma(\mathrm{K},\mathrm{X}_i)} \d \xi \nonumber\\&=C \int_{0}^{\infty}  e^{-2\chi \xi} \|e^{-\mu \xi \A}\|^2_{\gamma(\mathrm{K},\mathrm{X}_i)} \d \xi.
		\end{align} 
		Moreover, $\mathbb{E}\left[\|\z_{\chi} (t)\|^2_{\mathrm{X}_i}\right]\to 0$ as $\chi \to \infty.$
	\end{proposition}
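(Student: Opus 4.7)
The plan is to verify the hypotheses of Proposition \ref{Ap} with $A = \mu\A + \chi I$ acting on $\mathrm{Y} = \mathrm{X}_i$, and then reap the conclusions stated in that proposition to obtain the required properties of $\z_\chi$. First I would check that $-(\mu\A + \chi I)$ generates an analytic semigroup on $\mathrm{X}_i$ satisfying the bound $\|(\mu\A+\chi I)^{1+\delta}e^{-t(\mu\A+\chi I)}\|_{\mathcal{L}(\mathrm{X}_i)} \leq C t^{-1-\delta} e^{-\gamma t}$, with $\gamma = \mu\lambda_1 + \chi > 0$ guaranteed by the Poincar\'e inequality \eqref{2.1}. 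This is standard analytic-semigroup theory for the Stokes operator on Poincar\'e domains, combined with its bounded extension to $\widetilde{\L}^{q}$ with $q \in \{4,\, r+1\}$, so this step reduces to quoting the corresponding results.

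Next I would prove stationarity. Observing that $\omega \mapsto (\mu\A+\chi I)^{-\delta}\omega$ commutes with the shift flow $\theta_s$, and applying the intertwining identity \eqref{IS} to the lifted path $(\mu\A+\chi I)^{-\delta}\omega \in C^{\xi}_{1/2}(\R,\mathrm{X}_i)$, we obtain $\z_\chi(\theta_s\omega)(t) = \z_\chi(\omega)(t+s)$. Since $\theta_s$ preserves $\mathbb{P}_i$, this immediately yields translation invariance of the finite-dimensional distributions of $\z_\chi$. The SDE \eqref{OUPe1} holds pathwise for $\omega \in C^\infty_0(\R,\mathrm{E}_i)$ by the integration-by-parts computation already displayed in the excerpt preceding \eqref{OuE1}; extending to general $\omega \in \Omega(\xi,\mathrm{E}_i)$ uses density of $C^\infty_0$ together with the continuity property \eqref{O-U_conti} of the map $\hat{z}$.

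To obtain the stochastic integral representation \eqref{oup1}, the cleanest route is to show that the It\^o stochastic convolution $t \mapsto \int_{-\infty}^t e^{-(t-\xi)(\mu\A+\chi I)} \d\mathrm{W}(\xi)$ is a mild solution of the same SDE \eqref{OUPe1} on the M-type $2$ Banach space $\mathrm{X}_i$, and then invoke uniqueness: both the pathwise formula \eqref{DOu1} and the It\^o convolution produce processes that satisfy \eqref{OUPe1}, so they must coincide $\mathbb{P}_i$-a.s. Alternatively, one can approximate $\mathrm{W}$ by smooth paths in $C^{\xi}_{1/2}(\R,\mathrm{E}_i)$ and use the continuity of $\hat{z}$; either way the argument parallels the one carried out in \cite{BL}.

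Finally, the moment bound \eqref{E-OUP1} follows from the It\^o isometry for M-type $2$ Banach spaces (cf.\ \cite{Brze1}), which yields $\mathbb{E}\|\z_\chi(t)\|_{\mathrm{X}_i}^2 \leq C\int_{-\infty}^t \|e^{-(t-\xi)(\mu\A+\chi I)}\|_{\gamma(\mathrm{K},\mathrm{X}_i)}^2 \d\xi$; the change of variable $\eta = t-\xi$ together with the factorization $e^{-\eta(\mu\A+\chi I)} = e^{-\chi\eta} e^{-\mu\eta\A}$ produces the stated formula. For $\chi \to \infty$, the integrability of $\eta \mapsto \|e^{-\mu\eta\A}\|_{\gamma(\mathrm{K},\mathrm{X}_i)}^2$ on $(0,\infty)$, guaranteed by Assumptions \ref{assump1}--\ref{assump2} together with short-time smoothing estimates that absorb the singularity at $0$ into the $\delta$-loss, combined with the pointwise decay $e^{-2\chi\eta} \to 0$ for $\eta>0$, gives the required convergence by dominated convergence. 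The main obstacle I expect is the rigorous identification of the pathwise Ornstein-Uhlenbeck process \eqref{DOu1} with the It\^o stochastic convolution in the Banach space $\mathrm{X}_i$, since stochastic integration outside the Hilbert-space setting requires the M-type $2$ machinery and a careful handling of the $\gamma$-radonifying norms, but once this identification is in hand the remaining estimates are routine.
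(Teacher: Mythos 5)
Your proposal is a correct and essentially complete reconstruction of the standard argument: the paper itself gives no proof of this statement, importing it verbatim as Proposition 6.10 of \cite{BL}, and your outline (verifying the hypotheses of Proposition \ref{Ap} for $\mu\A+\chi I$ on $\mathrm{X}_i$, stationarity via \eqref{IS} and the $\mathbb{P}_i$-invariance of $\theta_s$, identification of the pathwise formula \eqref{DOu1} with the It\^o convolution in the M-type 2 setting, and the moment bound via the Burkholder-type inequality followed by dominated convergence with the $\eta^{-2\delta}$ singularity controlled by $\delta<1/2$) is exactly the route taken in that reference. You also correctly flag the identification of the pathwise Ornstein--Uhlenbeck process with the stochastic convolution as the genuinely technical point, which is where the $\gamma$-radonifying assumptions \eqref{A1}--\eqref{A2} do their work.
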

	Since $\z_{\chi}(t)$ is a Gaussian random vector, by the Burkholder inequality (cf. \cite{Ondrejat}), for each $p\geq 2,$ there exists a constant $C_p > 0$ such that 
	\begin{align}
		\mathbb{E}\left[\|\z_{\chi}(t)\|^p_{\mathrm{X}_i}\right] \leq C_p \left(\mathbb{E}\left[\|\z_{\chi}(t)\|^2_{\mathrm{X}_i}\right]\right)^{p/2},
	\end{align}	
	and thus
	\begin{align}\label{alpha_con1}
		\mathbb{E}\left[\|\z_{\chi}(t)\|^{p}_{\mathrm{X}_i}\right]  \to  0\ \text{ as } \ \chi\to \infty.
	\end{align}
	Using  Proposition \ref{E-OUP1}, the process $\{\z_{\chi}(t), \ t\in \R \}$ is an $\mathrm{X}_i$-valued stationary and ergodic process. Hence, by the strong law of large numbers (cf. \cite{DZ}), we have
	\begin{align}
		\lim_{t \to \infty} \frac{1}{t} \int_{-t}^{0} \|\z_{\chi}(s)\|^{4}_{\mathrm{X}_1} \d s = \mathbb{E} \left[\|\z_{\chi}(0)\|^{4}_{\mathrm{X}_1}\right], \ \ \ \ \mathbb{P}_1\text{-a.s. on } \ C^{\xi}_{1/2}(\R, \mathrm{X}_1).\label{SLLN1}
	\end{align}
	Moreover, from \eqref{alpha_con1}, we can find a $\chi_0$ such that for all $\chi \geq \chi_0,$ 
	\begin{align}\label{Bddns_1}
		\mathbb{E}\left[\|\z_{\chi} (0)\|^{4}_{\mathrm{X}_1}\right] \leq \frac{\alpha}{R},
	\end{align}
	where $R=\frac{729 }{8 \mu^3}$ and $\alpha>0$ is the Darcy  constant.
	
	Denote by $\Omega_{\chi}(\xi, \mathrm{E}_1),$ the set of those $\omega\in \Omega(\xi, \mathrm{E}_1)$ for which equality \eqref{SLLN1} is satisfied. The set $\Omega_{\chi}(\xi, \mathrm{E}_1)$ has full measure.  Therefore, we fix $\xi \in (\delta, 1/2)$ and set $$\Omega_1 :=  \bigcap^{\infty}_{n=0} \Omega_{n}(\xi, \mathrm{E}_1).$$
	Also, we denote $\Omega_2:=\Omega(\xi, \mathrm{E}_2)$. Furthermore, in view of \eqref{IS}, the sets $\Omega_i,$ $i=1,2,$ are invariant with respect to the flow $\theta$.

	We take the quadruple $(\Omega_i, \hat{\mathcal{F}}_i, \hat{\mathbb{P}}_i, \hat{\theta})$ as a model of an MDS, where $\hat{\mathcal{F}}_i$, $\hat{\mathbb{P}}_i$, $\hat{\theta}$ are the natural restrictions of $\mathcal{F}_i$, $\mathbb{P}_i$ and $\theta$ to $\Omega_i$, respectively. The reason to take $(\Omega_i, \hat{\mathcal{F}}_i, \hat{\mathbb{P}}_i, \hat{\theta})$ as a model of MDS will be cleared later.
	\begin{proposition}\label{m-DS1}
		The quadruple $(\Omega_i, \hat{\mathcal{F}}_i, \hat{\mathbb{P}}_i, \hat{\theta})$ is an MDS. 
	\end{proposition}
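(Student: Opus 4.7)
The plan is to verify the four axioms of a metric dynamical system for $(\Omega_i,\hat{\mathcal{F}}_i,\hat{\mathbb{P}}_i,\hat{\theta})$, $i=1,2$, by showing that the full MDS structure on $(\Omega(\xi,\mathrm{E}_i),\mathcal{F}_i,\mathbb{P}_i,\theta)$ descends, by restriction, to the subsets $\Omega_i$. The key ingredients have essentially been collected just before the statement, so the argument is mostly bookkeeping: one needs (a) $\Omega_i\in\mathcal{F}_i$ with $\mathbb{P}_i(\Omega_i)=1$, so that $\hat{\mathbb{P}}_i:=\mathbb{P}_i\!\restriction\!\hat{\mathcal{F}}_i$ is a probability measure; (b) $\theta_t(\Omega_i)=\Omega_i$ for every $t\in\mathbb{R}$, so $\hat{\theta}:=\theta\!\restriction\!\mathbb{R}\times\Omega_i$ is well defined and takes values in $\Omega_i$; (c) $\hat{\theta}$ inherits the group and measurability properties from $\theta$; and (d) each $\hat{\theta}_t$ preserves $\hat{\mathbb{P}}_i$.

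For $i=2$, items (a)--(d) are immediate because $\Omega_2=\Omega(\xi,\mathrm{E}_2)$ is the whole underlying path space, and the fact that $(\Omega(\xi,\mathrm{E}_2),\mathcal{F}_2,\mathbb{P}_2,\theta)$ is a MDS is standard for the Wiener shift as recalled in Proposition~\ref{SOUP1} and the paragraph preceding it. For $i=1$, the only real point is to establish (a) and (b). For (a), one writes $\Omega_1=\bigcap_{n=0}^{\infty}\Omega_n(\xi,\mathrm{E}_1)$ as a countable intersection of $\mathcal{F}_1$-measurable sets of full $\mathbb{P}_1$-measure (the measurability coming from the continuity of the maps $\omega\mapsto\hat{z}(\omega)(s)$ and $\omega\mapsto\hat{z}(\omega)$ provided by Proposition~\ref{Ap}, which makes the ergodic-average set in \eqref{SLLN1} measurable, while fullness of $\mathbb{P}_1(\Omega_n(\xi,\mathrm{E}_1))=1$ is the strong law of large numbers cited just before the statement). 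Hence $\mathbb{P}_1(\Omega_1)=1$ and $\Omega_1\in\mathcal{F}_1$, so $\hat{\mathbb{P}}_1$ is a probability measure on $(\Omega_1,\hat{\mathcal{F}}_1)$. For (b), the stationarity identity \eqref{IS}, $\hat{z}(\theta_s\omega)(t)=\hat{z}(\omega)(t+s)$, implies that the ergodic average in \eqref{SLLN1} is invariant under $\theta$: if $\omega\in\Omega_n(\xi,\mathrm{E}_1)$, then a change of variables $s\mapsto s-t$ yields $\theta_t\omega\in\Omega_n(\xi,\mathrm{E}_1)$ and vice versa, for every $n\in\mathbb{N}\cup\{0\}$, so the intersection $\Omega_1$ is $\theta$-invariant.

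Once (a) and (b) are in hand, (c) follows trivially: $\hat{\theta}_0=\theta_0\!\restriction\!\Omega_i=\mathrm{Id}_{\Omega_i}$, and $\hat{\theta}_{t+s}=\theta_{t+s}\!\restriction\!\Omega_i=(\theta_t\circ\theta_s)\!\restriction\!\Omega_i=\hat{\theta}_t\circ\hat{\theta}_s$. The joint measurability of $(t,\omega)\mapsto\hat{\theta}_t\omega$ as a map $\mathbb{R}\times\Omega_i\to\Omega_i$ is the restriction of the corresponding joint measurability of $\theta$, equipping $\Omega_i$ with the trace $\sigma$-algebra and using the measurability of the inclusion $\Omega_i\hookrightarrow\Omega(\xi,\mathrm{E}_i)$. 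Finally, (d) is inherited: for any $A\in\hat{\mathcal{F}}_i$ there exists $\widetilde{A}\in\mathcal{F}_i$ with $A=\widetilde{A}\cap\Omega_i$, and then
\begin{equation*}
\hat{\mathbb{P}}_i(\hat{\theta}_t^{-1}A)=\mathbb{P}_i(\theta_t^{-1}\widetilde{A}\cap\Omega_i)=\mathbb{P}_i(\theta_t^{-1}\widetilde{A})=\mathbb{P}_i(\widetilde{A})=\hat{\mathbb{P}}_i(A),
\end{equation*}
using the $\theta$-invariance of $\Omega_i$ (so $\theta_t^{-1}\Omega_i=\Omega_i$ up to $\mathbb{P}_i$-null sets) and the fact that $\theta_t$ preserves $\mathbb{P}_i$.

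The only mildly delicate point I anticipate is checking $\theta$-invariance of $\Omega_1$ cleanly; all other steps are genuinely routine once the stationarity identity \eqref{IS} and the SLLN-based fullness statement have been recorded. There is no nontrivial analytic obstacle here, which is precisely why the preceding subsections were devoted to establishing the stationarity of the Ornstein--Uhlenbeck process and the ergodic limit \eqref{SLLN1}.
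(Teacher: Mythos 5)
Your proof is correct and follows exactly the route the paper intends: the paper states this proposition without proof, having already recorded the two essential ingredients in the preceding paragraphs (full $\mathbb{P}_1$-measure of each $\Omega_n(\xi,\mathrm{E}_1)$ via the strong law of large numbers \eqref{SLLN1}, and $\theta$-invariance of $\Omega_i$ via the stationarity identity \eqref{IS}). Your write-up simply makes explicit the routine restriction argument that the paper leaves implicit, including the only slightly delicate point (invariance of the ergodic-average set under the shift), which you handle correctly.
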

	Let us now provide  an  important consequence of the above arguments.
	\begin{corollary}\label{Bddns1_1}
		For each $\omega\in \Omega_1$, there exists $t_0=t_0 (\omega) \geq 0 $, such that 
		\begin{align*}
			R \int_{-t}^{0} \|\z_{\chi}(s)\|^{4}_{\mathrm{X}_1} \d s \leq \alpha t, \ \ \ t\geq t_0.
		\end{align*}
		Also, since the embedding $\mathrm{X}_1\hookrightarrow \widetilde{\L}^4$ is a contraction, we have 
		\begin{align*}
			R \int_{-t}^{0} \|\z_{\chi}(s)\|^{4}_{\widetilde{\L}^4} \d s \leq  \alpha t, \ \ \ t\geq t_0.
		\end{align*}
	\end{corollary}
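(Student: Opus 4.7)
The plan is to combine the strong law of large numbers established in \eqref{SLLN1} with the smallness of $\mathbb{E}[\|\z_{\chi}(0)\|^{4}_{\mathrm{X}_1}]$ guaranteed by \eqref{Bddns_1}. Concretely, fix $\omega\in\Omega_1$ and $\chi\geq\chi_0$, where $\chi_0$ is as in \eqref{Bddns_1}. By the definition of $\Omega_1$ as the intersection of the full-measure sets $\Omega_n(\xi,\mathrm{E}_1)$, $n\in\mathbb{N}$, the pointwise equality
\begin{equation*}
\lim_{t\to\infty}\frac{1}{t}\int_{-t}^{0}\|\z_{\chi}(s)\|^{4}_{\mathrm{X}_1}\d s \;=\; \mathbb{E}\bigl[\|\z_{\chi}(0)\|^{4}_{\mathrm{X}_1}\bigr]
\end{equation*}
holds at this particular $\omega$. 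Invoking \eqref{Bddns_1}, the right-hand side is at most $\alpha/R$.

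Next, I would translate this limit into the desired uniform inequality. Since the ergodic averages converge to a quantity bounded by $\alpha/R$, for every $\varepsilon>0$ there exists $t_0=t_0(\omega,\varepsilon)\geq 0$ such that
\begin{equation*}
\frac{1}{t}\int_{-t}^{0}\|\z_{\chi}(s)\|^{4}_{\mathrm{X}_1}\d s \;\leq\; \frac{\alpha}{R}+\varepsilon,\qquad t\geq t_0.
\end{equation*}
Strictly speaking, to get the clean bound $R\int_{-t}^{0}\|\z_{\chi}(s)\|^{4}_{\mathrm{X}_1}\d s\leq\alpha t$, I would either (i) absorb an $\varepsilon$ by choosing $\chi$ slightly larger (using that $\mathbb{E}[\|\z_\chi(0)\|^4_{\mathrm{X}_1}]\to 0$ as $\chi\to\infty$, cf.\ \eqref{alpha_con1}), or (ii) replace the constant $\alpha/R$ in \eqref{Bddns_1} by a strictly smaller one, so that the convergence of the averages to a value below $\alpha/R$ produces the desired non-strict inequality for all $t\geq t_0(\omega)$. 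Either way, multiplying through by $Rt$ yields exactly
\begin{equation*}
R\int_{-t}^{0}\|\z_{\chi}(s)\|^{4}_{\mathrm{X}_1}\d s\;\leq\;\alpha t,\qquad t\geq t_0(\omega).
\end{equation*}

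For the second inequality, I would simply appeal to the definition of the norm on $\mathrm{X}_1$: since $\|\v\|_{\mathrm{X}_1}=\|\v\|_{\H}+\|\v\|_{\widetilde{\L}^4}\geq\|\v\|_{\widetilde{\L}^4}$, the inclusion $\mathrm{X}_1\hookrightarrow\widetilde{\L}^4$ is a contraction. Hence pointwise in $s$, $\|\z_{\chi}(s)\|^{4}_{\widetilde{\L}^4}\leq\|\z_{\chi}(s)\|^{4}_{\mathrm{X}_1}$, and integrating gives the second claim with the same $t_0$.

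There is no real obstacle here, as every ingredient has been prepared upstream; the only delicate point is the harmless step of turning the strict limit bound into a non-strict integrated inequality, which is handled by the slack already built into \eqref{Bddns_1} (or by a small upward adjustment of $\chi$). The rest is bookkeeping on the flow-invariant full-measure set $\Omega_1$.
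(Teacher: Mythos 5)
Your argument is exactly the one the paper intends (the corollary is stated there without an explicit proof, as a direct consequence of the strong law of large numbers \eqref{SLLN1} together with the bound \eqref{Bddns_1} and the fact that $\|\cdot\|_{\widetilde{\L}^4}\leq\|\cdot\|_{\mathrm{X}_1}$). You have in fact been more careful than the paper on the one delicate point — turning the limit of the ergodic averages into the non-strict inequality for all $t\geq t_0$ requires a little slack in \eqref{Bddns_1}, which your adjustment of $\chi$ (using \eqref{alpha_con1}) supplies correctly.
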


	\subsection{Random dynamical system}
	Remember that Assumptions \ref{assump1} (for $r\in[1,3)$) and  \ref{assump2} (for $r\geq3$) are satisfied and that $\delta$ has the property stated there. Let us fix $\mu,\alpha,\beta> 0$, and the parameters $\chi\geq 0$ and  $\xi \in (\delta, 1/2)$.
	
	Let us denote $\v^{\chi}(t)=\u(t) - \z_{\chi}(\omega)(t)$. For convenience, we write $\v^{\chi}(t)=\v(t)$ and $\z_{\chi}(\omega)(t)=\z(t)$. Then $\v(\cdot)$ satisfies the following system:
	\begin{equation}\label{cscbf}
		\left\{
		\begin{aligned}
			\frac{\d\v}{\d t} &= -\mu \A\v - \B(\v + \z)-\alpha\v - \beta \mathcal{C}(\v + \z) + (\chi-\alpha) \z + \f, \\
			\v(0)&= \boldsymbol{x} - \z_{\chi}(0).
		\end{aligned}
		\right.
	\end{equation}
	Since $\z_{\chi}(\omega) \in C_{1/2} (\mathbb{R}, \mathrm{X}_i), $ then $\z_{\chi}(\omega)(0)$ is a well defined element of $\H$. Let us now provide the definition of weak solution (in the deterministic sense, for each  fixed $\omega$) for \eqref{cscbf}.
	\begin{definition}\label{defn5.9}
		Assume that $\x \in \H$, $\f\in \V'$, $\z\in\mathrm{L}^2_{\emph{loc}}([0,\infty);\H)\cap\mathrm{L}^4_{\emph{loc}}([0,\infty);\widetilde{\L}^4)$ (for $r\in[1,3)$) and $\z\in\mathrm{L}^2_{\emph{loc}}([0,\infty);\H)\cap\mathrm{L}^{r+1}_{\emph{loc}}([0,\infty);\widetilde{\L}^{r+1})$ (for $r\geq3$). A function $\v(\cdot)$ is called a \emph{weak solution} of the system \eqref{cscbf} on time interval $[0, \infty)$, if $$\v\in  \mathrm{C}([0,\infty); \H) \cap \mathrm{L}^{2}_{\emph{loc}}([0,\infty); \V)\cap\mathrm{L}^{\r+1}_{\emph{loc}}([0,\infty); \widetilde{\L}^{r+1}),$$   $$\frac{\d\v}{\d t}\in\mathrm{L}^{2}_{\emph{loc}}([0,\infty);\V')+\mathrm{L}^{\frac{r+1}{r}}_{\emph{loc}}([0,\infty);\widetilde{\L}^{\frac{r+1}{r}}),$$ and it satisfies 
		\begin{itemize}
			\item [(i)] for any $\psi\in \V\cap\wi\L^{r+1},$ 
			\begin{align}\label{W-CSCBF}
				\left<\frac{\d\v(t)}{\d t}, \psi\right>&=  - \left\langle \mu \A\v(t)-\alpha\v(t)+\B(\v(t)+\z(t))+\beta \mathcal{C}(\v(s)+\z(t)) , \psi \right\rangle \nonumber\\&\quad+\left\langle(\chi-\alpha)\z(t)- \f , \psi \right\rangle,
			\end{align}
			for a.e. $t\in[0,\infty);$
			\item [(ii)] the initial data:
			$$\v(0)=\x-\z(0) \ \text{ in }\ \H;$$
			\item [(iii)] the energy equality:
			\begin{align}\label{eeq}
				&\|\v(t)\|_{\H}^2+2\mu\int_0^t\|\v(\zeta)\|_{\V}^2\d\zeta+2\alpha\int_0^t\|\v(\zeta)\|_{\H}^2\d\zeta\nonumber\\&= \|\boldsymbol{x}-\z(0)\|_{\H}^2-2\int_0^t\langle\B(\v(\zeta)+\z(\zeta)),\v(\zeta)\rangle\d \zeta -2\beta\int_0^t\langle\mathcal{C}(\v(\zeta)+\z(\zeta)),\v(\zeta)\rangle\d s\nonumber\\&\quad+2\int_0^t\langle\f,\v(\zeta)\rangle\d \zeta+2(\chi-\alpha)\int_0^t(\z(\zeta),\v(\zeta))\d \zeta, 	\text{	for all } t\in[0,T],
			\end{align}
			for $0<T<\infty$.
		\end{itemize}
	\end{definition}

	\begin{theorem}\label{solution}
		For the cases given in Table \ref{Table1}, let $\mathcal{O}$ satisfy  Assumption \ref{assumpO}, $\chi\geq0$, $\x \in \H$, $\f\in \V'$, $\z\in\mathrm{L}^2_{\emph{loc}}([0,\infty);\H)\cap\mathrm{L}^{4}_{\emph{loc}}([0,\infty);\widetilde{\L}^{4})$ (for $r\in[1,3)$) and $\z\in\mathrm{L}^2_{\emph{loc}}([0,\infty);\H)\cap\mathrm{L}^{r+1}_{\emph{loc}}([0,\infty);\widetilde{\L}^{r+1})$ (for $r\geq3$). Then there exists a unique solution $\v(\cdot)$ to the system \eqref{cscbf} in the sense of Definition \ref{defn5.9}.
	\end{theorem}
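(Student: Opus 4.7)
The proof follows a Faedo--Galerkin strategy adapted to the transformed system. Since $\mathcal{V}$ is dense in $\V\cap\widetilde{\L}^{r+1}$, I would first fix a sequence $\{\boldsymbol{e}_j\}_{j\ge 1}\subset\mathcal{V}$ whose finite linear spans $\H_n$ have union dense in $\V\cap\widetilde{\L}^{r+1}$, and let $\Pi_n$ denote the $\H$-orthogonal projection onto $\H_n$. Projecting \eqref{cscbf} onto $\H_n$ produces a finite-dimensional ODE system for $\v_n:=\Pi_n\v$ whose right-hand side is locally Lipschitz on bounded subsets of $\H_n$, so a unique maximal solution exists by Carath\'eodory; the a priori bounds below will then make it global.

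Testing the Galerkin equation against $\v_n$ and using $b(\v_n+\z,\v_n,\v_n)=0$ and $\langle\mathcal{C}(\v_n+\z),\v_n\rangle=\|\v_n+\z\|_{\widetilde{\L}^{r+1}}^{r+1}-\langle\mathcal{C}(\v_n+\z),\z\rangle$, I would estimate the cross terms $-b(\v_n+\z,\v_n,\z)$ and $\langle\mathcal{C}(\v_n+\z),\z\rangle$ by H\"older and Young inequalities so as to absorb $\tfrac{\mu}{2}\|\v_n\|_{\V}^2$ and $\tfrac{\beta}{2}\|\v_n+\z\|_{\widetilde{\L}^{r+1}}^{r+1}$ into the left-hand side, obtaining
\begin{equation*}
\frac{\d}{\d t}\|\v_n\|_{\H}^2+\mu\|\v_n\|_{\V}^2+2\alpha\|\v_n\|_{\H}^2+\beta\|\v_n+\z\|_{\widetilde{\L}^{r+1}}^{r+1}\le C\bigl(\|\f\|_{\V'}^2+\|\z\|_{\H}^2+\|\z\|_{\widetilde{\L}^{4}}^{4}+\|\z\|_{\widetilde{\L}^{r+1}}^{r+1}\bigr).
\end{equation*}
Integration in time yields uniform bounds for $\{\v_n\}$ in $\mathrm{L}^\infty(0,T;\H)\cap\mathrm{L}^2(0,T;\V)\cap\mathrm{L}^{r+1}(0,T;\widetilde{\L}^{r+1})$, and \eqref{2p9 1} (for $r<3$) or \eqref{2.9a} (for $r\ge 3$) then bound $\d\v_n/\d t$ in $\mathrm{L}^{(r+1)/r}(0,T;\V'+\widetilde{\L}^{(r+1)/r})$.

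Banach--Alaoglu supplies a subsequence with $\v_n\rightharpoonup\v$ in $\mathrm{L}^2(0,T;\V)\cap\mathrm{L}^{r+1}(0,T;\widetilde{\L}^{r+1})$ and $\v_n\overset{\ast}{\rightharpoonup}\v$ in $\mathrm{L}^\infty(0,T;\H)$. The lack of compactness of $\V\hookrightarrow\H$ on unbounded $\mathcal{O}$ is handled locally, in the spirit of \cite{BL,BCLLLR}: for each bounded $\mathcal{O}_1\subset\mathcal{O}$, Aubin--Lions yields $\v_n\to\v$ strongly in $\mathrm{L}^2(0,T;\L^2(\mathcal{O}_1))$, and Lemma \ref{convergence_b*} together with Corollaries \ref{convergence_b1}, \ref{convergence_b2}, \ref{convergence_c2_1} and \ref{convergence_c3_1} then identifies the weak limits of $\B(\v_n+\z)$ and $\mathcal{C}(\v_n+\z)$ when paired with test functions from $\mathcal{V}$, which by density extend to the whole of $\V\cap\widetilde{\L}^{r+1}$. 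The limit $\v$ therefore satisfies \eqref{W-CSCBF}; continuity in $\H$ and the energy equality \eqref{eeq} follow from the regularity $\v\in\mathrm{L}^2(\V)\cap\mathrm{L}^{r+1}(\widetilde{\L}^{r+1})$ with $\d\v/\d t\in\mathrm{L}^{(r+1)/r}(\V'+\widetilde{\L}^{(r+1)/r})$ via the Lions--Magenes chain rule (in the case $d=3$, $r>5$, Assumption \ref{assumpO} confines us to bounded $\mathcal{O}$, which is exactly where the Stokes eigenbasis enters to construct the simultaneous $\V$ and $\widetilde{\L}^{r+1}$ approximation discussed after that assumption).

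Uniqueness is then immediate from Theorem \ref{LocMon}. If $\v_1,\v_2$ are two solutions and $w=\v_1-\v_2$, setting $\u_i=\v_i+\z$ and noting that $w$ itself is admissible as a test function, the energy equality gives
\begin{equation*}
\tfrac{1}{2}\tfrac{\d}{\d t}\|w(t)\|_{\H}^2=-\langle\G(\u_1(t))-\G(\u_2(t)),\u_1(t)-\u_2(t)\rangle\le \eta(t)\|w(t)\|_{\H}^2,
\end{equation*}
where $\eta$ is the coefficient produced by \eqref{fe2_1}--\eqref{fe2_3}. In each case $\eta\in\mathrm{L}^1(0,T)$: for $d=2$, $r\in[1,3]$ this uses Ladyzhenskaya \eqref{lady} combined with the energy bound; for $r>3$ and for $d=r=3$ with $2\beta\mu\ge 1$, $\eta$ is constant or zero. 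Gronwall's lemma then forces $w\equiv 0$. The step I expect to be hardest is the joint local-compactness/density manoeuvre needed to pass to the limit in both $\B$ and $\mathcal{C}$ simultaneously on the Poincar\'e domain, because the space $\V\cap\widetilde{\L}^{r+1}$ of test functions lacks the spectral structure available in the bounded case.
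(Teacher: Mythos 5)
Your overall architecture coincides with the paper's: Faedo--Galerkin on finite-dimensional spans of $\mathcal{V}$, the same a priori bounds in $\mathrm{L}^\infty(0,T;\H)\cap\mathrm{L}^2(0,T;\V)\cap\mathrm{L}^{r+1}(0,T;\widetilde{\L}^{r+1})$, Aubin--Lions on bounded subdomains $\mathcal{O}_R$ to compensate for the non-compact embedding, identification of the limits of $\B$ and $\mathcal{C}$ via the same convergence lemmas, and uniqueness from the monotonicity estimates of Theorem \ref{LocMon} plus Gronwall. All of that is sound and matches Steps I and III of the paper's proof.

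The one genuine gap is your treatment of the energy equality. You dispose of it in a single clause by invoking ``the Lions--Magenes chain rule'' for $\v\in\mathrm{L}^2(0,T;\V)\cap\mathrm{L}^{r+1}(0,T;\widetilde{\L}^{r+1})$ with $\d\v/\d t\in\mathrm{L}^{(r+1)/r}(0,T;\V'+\widetilde{\L}^{(r+1)/r})$. But the classical lemma applies to the Gelfand triple $\V\hookrightarrow\H\hookrightarrow\V'$ with $\v'\in\mathrm{L}^2(0,T;\V')$; in the present setting its generalized version requires producing smooth approximants of $\v$ that converge \emph{simultaneously} in $\mathrm{L}^2(0,T;\V)$ and in $\mathrm{L}^{r+1}(0,T;\widetilde{\L}^{r+1})$, and the availability of such a sequence is precisely the issue the paper flags as the reason energy equality is delicate (and, for $d=3$, $r>5$, as the reason the domain must be bounded). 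You acknowledge this for $r>5$, but for $3<r\leq 5$ on an unbounded Poincar\'e domain your argument is silent on how the chain rule is actually justified. The paper resolves it in its Step II by an explicit Friedrichs mollification in time of test functions drawn from $\mathcal{V}_T$, and the decisive ingredient is the Sobolev embedding $\V\hookrightarrow\widetilde{\L}^{r+1}$ (valid in 3D exactly for $r+1\leq 6$), which lets convergence of the approximants in $\mathrm{L}^{r+1}(0,T;\V)$ control the term $\int_0^t\langle\mathcal{C}(\v+\z),\v_{m,h}-\v_h\rangle\,\d\zeta$ and thereby close the duality pairing. Without naming this embedding (or an equivalent device) your proof of (iii) of Definition \ref{defn5.9} — and hence also the testing of the difference equation by $w$ in your uniqueness step, which relies on the same identity — is incomplete. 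The fix is short but it is the mathematical heart of why the theorem is stated for $3<r\leq 5$ on unbounded domains and only for bounded domains when $r>5$.
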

	\begin{proof}
		Let us fix $T>0$. In order to complete the proof on the interval $[0,\infty)$, it is enough to prove on the  interval $[0,T]$. 
		\vskip 2mm
		\noindent
		\textbf{Step I.} \emph{Existence of weak solutions.} Let us consider the following approximate equation for the system \eqref{cscbf} on the finite dimensional space $\H_n$ (see Subsection \ref{C_O}):
		\begin{equation}\label{cscbf_n}
			\left\{
			\begin{aligned}
				\frac{\d\v^n}{\d t} &= \P_n\bigg[-\mu \A\v^n -\alpha\v^n- \B(\v^n+\z)- \beta \mathcal{C}(\v^n + \z) + (\chi-\alpha)\z + \f\bigg], \\
				\v^n(0)&= \P_n[\x-\z(0)]:={\v_0}_n.
			\end{aligned}
			\right.
		\end{equation}
		We define $\A_n\v^n=\P_n\A\v^n$, $\B_n(\v^n+\z)=\mathrm{P}_n\B(\v^n+\z)$ and $\mathcal{C}_n(\v^n+\z)=\mathrm{P}_n\mathcal{C}(\v^n+\z)$ and consider the following system of ODEs:
		\begin{equation}\label{finite-dimS}
			\left\{
			\begin{aligned}
				\frac{\d\v^n(t)}{\d t}&=-\mu \A_n\v^n(t)-\alpha\v^n(t)-\B_n(\v^n(t)+\z(t))-\beta\mathcal{C}_n(\v^n(t)+\z(t))\\ &\quad+(\chi-\alpha)\z_n(t)+\f_n,\\
				\v^n(0)&={\v_0}_n.
			\end{aligned}
			\right.
		\end{equation}
		Since $\B_n(\cdot)$ and $\mathcal{C}_n(\cdot)$ are  locally Lipschitz, the system (\ref{finite-dimS}) has a unique local solution $\v^n\in\mathrm{C}([0,T^*];\H_n)$, for some $0<T^*<T$. The following a priori estimates show that the time $T^*$ can be extended to time $T$. Taking the inner product with $\v^n(\cdot)$ to the first equation of \eqref{cscbf_n}, we obtain
		\begin{align}\label{S1}
			\frac{1}{2}\frac{\d}{\d t}\|\v^n(t)\|^2_{\H}&=-\mu\|\v^n(t)\|^2_{\V}-\alpha\|\v^n(t)\|^2_{\H}-\beta\|\v^n(t)+\z(t)\|^{r+1}_{\wi\L^{r+1}}\nonumber\\&\quad+\beta\left(\mathcal{C}(\v^n(t)+\z(t)),\z(t)\right)+b(\v^n(t)+\z(t),\v^n(t),\z(t))\nonumber\\&\quad+ ((\chi-\alpha)\z(t),\v^n(t))+\langle\f,\v^n(t)\rangle.
		\end{align}
		Next, we estimate each term of the right hand side of \eqref{S1} as 
		\begin{align}
			|b(\v^n+\z, \v^n, \z)| & \leq  \begin{cases}
				\|\v^n+\z\|_{\widetilde{\L}^{r+1}} \|\v^n\|_{\V} \|\z\|_{\widetilde{\L}^{\frac{2(r+1)}{r-1}}},& \text{ for } d=2,3 \text{ with } r\geq3\nonumber\\
				\|\v^n+\z\|_{\widetilde{\L}^{4}} \|\v^n\|_{\V} \|\z\|_{\widetilde{\L}^{4}}, &\text{ for } d=2  \text{ with } r\in[1,3),
			\end{cases}\\
		&\leq \begin{cases}
				\|\v^n+\z\|_{\widetilde{\L}^{r+1}} \|\v^n\|_{\V} \|\z\|^{\frac{2}{r-1}}_{\widetilde{\L}^{r+1}}\|\z\|^{\frac{r-3}{r-1}}_{\H}, &\text{ for } d=2,3 \text{ with } r\geq3,\nonumber\\
				\|\v^n+\z\|_{\widetilde{\L}^{4}} \|\v^n\|_{\V} \|\z\|_{\widetilde{\L}^{4}}, &\text{ for } d=2  \text{ with } r\in[1,3),
			\end{cases}\\
			|b(\v^n+\z, \v^n, \z)|&\leq\begin{cases}
				\frac{\beta}{4}\|\v^n+\z\|^{r+1}_{\widetilde{\L}^{r+1}}+\frac{\mu}{4} \|\v^n\|^2_{\V}+ C\|\z\|^{r+1}_{\widetilde{\L}^{r+1}}+C\|\z\|^{2}_{\H}, &\text{for } d=2,3\\ & \text{with } r\geq3,\\
				\frac{\mu}{4} \|\v^n\|^2_{\V}+\|\v^n\|^2_{\H}\|\z\|^{4}_{\widetilde{\L}^{4}}+ C\|\z\|^{4}_{\widetilde{\L}^{4}}, &\text{for } d=2  \text{ with }\\ & r\in[1,3),
			\end{cases}	
			\label{Sb}\\
			\beta\left|\left(\mathcal{C}(\v^n+\z),\z\right)\right|&\leq\beta \|\v^n+\z\|^r_{\wi\L^{r+1}}\|\z\|_{\wi\L^{r+1}}\leq\frac{\beta}{4}\|\v^n+\z\|^{r+1}_{\wi\L^{r+1}} + C\|\z\|^{r+1}_{\wi\L^{r+1}},\\
			\big|((\chi-\alpha)\z(t),\v^n(t))&+\langle\f,\v^n(t)\rangle\big|\leq \frac{\mu}{4}\|\v^n\|^2_{\V}+\frac{\alpha}{2}\|\v^n\|^2_{\H}+C\|\f\|^2_{\V'}+C\|\z\|^2_{\H},\label{S2}
		\end{align}
		where we have used \eqref{lady} in \eqref{Sb} for $d=2$ with $r\in[1,3)$. Combining \eqref{S1}-\eqref{S2}, we deduce
		\begin{align}\label{S3}
			&\frac{\d}{\d t}\|\v^n(t)\|^2_{\H}+\mu\|\v^n(t)\|^2_{\H}+\alpha\|\v^n(t)\|^2+\beta\|\v^n(t)+\z(t)\|^{r+1}_{\wi\L^{r+1}}\nonumber\\&\leq C \times\begin{cases}
				\|\z(t)\|^2_{\H}+\|\z(t)\|^{4}_{\wi\L^{4}} +\|\z(t)\|^{r+1}_{\wi\L^{r+1}}+\|\f\|^2_{\V'},&\text{for } d=2,3\\ & \text{with } r\geq3,\\
				\|\v^n(t)\|^2_{\H}\|\z(t)\|^{4}_{\wi\L^{4}}+\|\z(t)\|^2_{\H}+\|\z(t)\|^{4}_{\wi\L^{4}} +\|\z(t)\|^{r+1}_{\wi\L^{r+1}}+\|\f\|^2_{\V'}, &\text{for } d=2  \text{ with }\\ & r\in[1,3),
			\end{cases}
		\end{align}
		which gives 
		\begin{align}
			\|\v^n(t)\|^2_{\H}\leq \begin{cases}
				\|\v^n(0)\|^2_{\H} +C\int_{0}^{t}\big[\|\z(s)\|^2_{\H}+\|\z(s)\|^{4}_{\wi\L^{4}} +\|\z(s)\|^{r+1}_{\wi\L^{r+1}}+\|\f\|^2_{\V'}\big]\d s,\\ \hspace{75mm} \text{for } d=2,3 \text{ with } r\geq3,\\
		 \|\v^n(0)\|^2_{\H}e^{\int_{0}^{t}\|\z(\zeta)\|^4_{\wi\L^4}\d\zeta} \\ +C\int_{0}^{t}e^{\int_{s}^{t}\|\z(\zeta)\|^4_{\wi\L^4}\d\zeta}\big[\|\z(s)\|^2_{\H}+\|\z(s)\|^{4}_{\wi\L^{4}} +\|\z(s)\|^{r+1}_{\wi\L^{r+1}}+\|\f\|^2_{\V'}\big]\d s,\\ \hspace{75mm}\text{for } d=2  \text{ with } r\in[1,3).
			\end{cases}\label{S4}
		\end{align}
		Furthermore, $\z\in \mathrm{L}^2(0,T; \H)\cap\mathrm{L}^{4} (0, T; \widetilde{\L}^{4})\cap\mathrm{L}^{r+1} (0, T; \widetilde{\L}^{r+1})$ for $d=2,3$ with $r\geq3$ as well as $d=2$ with $r\in[1,3)$ (see Remark \ref{RemarkI}). Hence, using the fact that $\|\v^n(0)\|_{\H}\leq\|\v(0)\|_{\H}$ and $\f\in\V'$, we have from \eqref{S4} that $\sup\limits_{t\in[0,T]}\|\v^n(t)\|^2_{\H}<\infty,$ from which we infer 
		\begin{align}\label{S5}
			\{\v^n\}_{n\in\N} \text{ is a bounded sequence in }\mathrm{L}^{\infty}(0,T;\H).
		\end{align}
		Now, integrating \eqref{S3} from $0$ to $T$, we obtain
		\begin{align}\label{S6}
			\{\v^n\}_{n\in\N} \text{ is a bounded sequence in }\mathrm{L}^{2}(0,T;\V)\cap\mathrm{L}^{r+1}(0,T;\widetilde{\L}^{r+1}).
		\end{align}
		For any arbitrary element $\boldsymbol{\psi}\in\mathrm{L}^2(0,T;\V)\cap\mathrm{L}^{r+1}(0,T;\widetilde{\L}^{r+1})$, using H\"older's inequality and Sobolev's embedding, we have from \eqref{cscbf_n}
		\begin{align*}
			&\left|\int_{0}^{T}\left\langle\frac{\d\v^n(t)}{\d t},\boldsymbol{\psi}(t)\right\rangle\d t\right| \nonumber\\&\leq \int_{0}^{T}\bigg[\mu\left|(\nabla\v^n(t),\nabla\boldsymbol{\psi}(t))\right|+\alpha\left|(\v^n(t),\boldsymbol{\psi}(t))\right|+\left|b(\v^n(t)+\z(t),\boldsymbol{\psi}(t),\v^n(t)+\z(t))\right|\nonumber\\&\quad+\beta\left|\left\langle\mathcal{C}(\v^n(t)+\z(t)),\boldsymbol{\psi}(t)\right\rangle\right| +(\chi-\alpha)\left|(\z(t),\boldsymbol{\psi}(t))\right|+\left|(\f,\boldsymbol{\psi}(t))\right|\bigg]\d t \nonumber\\&\leq C\times \begin{cases}
			\int_{0}^{T}\bigg[\|\v^n(t)\|_{\V}\|\boldsymbol{\psi}(t)\|_{\V}+\|\v^n(t)\|_{\H}\|\boldsymbol{\psi}(t)\|_{\H}+\|\v^n(t)+\z(t)\|_{\widetilde{\L}^{r+1}}^{\frac{r+1}{r-1}}\|\v^n(t)+\z(t)\|_{\H}^{\frac{r-3}{r-1}}\\ \times\|\boldsymbol{\psi}(t)\|_{\V}+\|\v^n(t)+\z(t)\|^r_{\wi\L^{r+1}}\|\boldsymbol{\psi}(t)\|_{\widetilde{\L}^{r+1}}+\|\z(t)\|_{\H}\|\boldsymbol{\psi}(t)\|_{\H}+\|\f\|_{\V'}\|\boldsymbol{\psi}(t)\|_{\V}\bigg]\d t, \\  \hspace{100mm}\text{for } d=2,3 \text{ with } r\geq3,\\
			\int_{0}^{T}\bigg[\|\v^n(t)\|_{\V}\|\boldsymbol{\psi}(t)\|_{\V}+\|\v^n(t)\|_{\H}\|\boldsymbol{\psi}(t)\|_{\H}+\|\v^n(t)+\z(t)\|_{\widetilde{\L}^{4}}^{2}\|\boldsymbol{\psi}(t)\|_{\V}\\ +\|\v^n(t)+\z(t)\|^r_{\wi\L^{r+1}}\|\boldsymbol{\psi}(t)\|_{\widetilde{\L}^{r+1}}+\|\z(t)\|_{\H}\|\boldsymbol{\psi}(t)\|_{\H} +\|\f\|_{\V'}\|\boldsymbol{\psi}(t)\|_{\V}\bigg]\d t, \\  \hspace{100mm} \text{for } d=2  \text{ with } r\in[1,3).
			\end{cases}
		\nonumber\\&\leq C\times \begin{cases}
			\bigg[\|\v^n\|_{\mathrm{L}^{2}(0,T;\V)}+\|\v^n+\z\|^{\frac{r+1}{r-1}}_{\mathrm{L}^{r+1}(0,T;\widetilde{\L}^{r+1})}\|\v^n+\z\|^{\frac{r-3}{r-1}}_{\mathrm{L}^{2}(0,T;\H)}+\|\v^n+\z\|^{r}_{\mathrm{L}^{r+1}(0,T;\widetilde{\L}^{r+1})}\\  +\|\z\|_{\mathrm{L}^{2}(0,T;\H)}+T^{1/2}\|\f\|_{\V'}\bigg] \|\boldsymbol{\psi}\|_{\mathrm{L}^2(0,T;\V)\cap\mathrm{L}^{r+1}(0,T;\widetilde{\L}^{r+1})},  \hspace{8mm}\text{for } d=2,3 \text{ with } r\geq3,\\
			\bigg[\|\v^n\|_{\mathrm{L}^{2}(0,T;\V)}+\|\v^n+\z\|_{\mathrm{L}^{4}(0,T;\widetilde{\L}^{4})}^{2} +\|\v^n+\z\|^r_{\mathrm{L}^{r+1}(0,T;\widetilde{\L}^{r+1})}+\|\z\|_{\mathrm{L}^{2}(0,T;\H)}\\ + T^{1/2}\|\f\|_{\V'}\bigg]\|\boldsymbol{\psi}\|_{\mathrm{L}^2(0,T;\V)\cap\mathrm{L}^{r+1}(0,T;\widetilde{\L}^{r+1})},  \hspace{34mm} \text{for } d=2  \text{ with } r\in[1,3),
		\end{cases}
		\end{align*}
		which implies that $\frac{\d \v^n}{\d t}\in \mathrm{L}^{2}(0,T;\V')+\mathrm{L}^{\frac{r+1}{r}}(0,T;\widetilde{\L}^{\frac{r+1}{r}})$. Using \eqref{S5}, \eqref{S6} and the \emph{Banach Alaoglu theorem}, we infer that there exists an element $\v\in\mathrm{L}^{\infty}(0,T;\H)\cap\mathrm{L}^{2}(0,T;\V)\cap\mathrm{L}^{r+1}(0,T;\widetilde{\L}^{r+1})$ and $\frac{\d \v}{\d t}\in \mathrm{L}^{\frac{r+1}{r}}(0,T;\V')$ such that
		\begin{align}
			\v^n\xrightharpoonup{w^*}&\ \v\text{ in }	\mathrm{L}^{\infty}(0,T;\H),\label{S7}\\
			\v^n\xrightharpoonup{w}&\ \v\text{ in } \mathrm{L}^{2}(0,T;\V)\cap\mathrm{L}^{r+1}(0,T;\widetilde{\L}^{r+1}),\label{S8}\\
			\frac{\d \v^n}{\d t}\xrightharpoonup{w}&\frac{\d \v}{\d t} \text{ in }\mathrm{L}^{2}(0,T;\V')+\mathrm{L}^{\frac{r+1}{r}}(0,T;\widetilde{\L}^{\frac{r+1}{r}}),\label{S8d}
		\end{align}
		along a subsequence. The fact that $\v^n\in\mathrm{L}^{2}(0,T;\V)$ and $\frac{\d \v^n}{\d t}\in \mathrm{L}^{\frac{r+1}{r}}(0,T;\V'+\wi\L^{\frac{r+1}{r}})$ imply $\v^n\in\mathrm{L}^{2}(0,T;\H^1_0(\mathcal{O}_R))$ and $\frac{\d \v^n}{\d t}\in \mathrm{L}^{\frac{r+1}{r}}(0,T;\H^{-1}(\mathcal{O}_R)+\L^{\frac{r+1}{r}}(\mathcal{O}_{R}))$, where $$\mathcal{O}_R=\mathcal{O}\cap\{x\in\R^3:|x|< R\}.$$ Since, $\v^n\in\mathrm{L}^{2}(0,T;\H^1(\mathcal{O}_R))$, $\frac{\d \v^n}{\d t}\in \mathrm{L}^{\frac{r+1}{r}}(0,T;\H^{-1}(\mathcal{O}_R)+\L^{\frac{r+1}{r}}(\mathcal{O}_{R}))$, the embeddings $$\H^1(\mathcal{O}_R)\subset\L^2(\mathcal{O}_R)\subset\H^{-1}(\mathcal{O}_R)+\L^{\frac{r+1}{r}}(\mathcal{O}_{R})$$ are continuous  and the embedding $\H^1(\mathcal{O}_R)\subset\L^2(\mathcal{O}_R)$ is compact, then  the \emph{Aubin-Lions compactness lemma} implies that 
		\begin{align}\label{S9}
			\v^n\to\v \ \text{ strongly in } \ \mathrm{L}^2(0,T;\L^2(\mathcal{O}_R)).
		\end{align}
		Next, we prove that $\v$ is a solution to the system \eqref{cscbf}. Let $\psi:[0,T]\to\R$ be a continuously differentiable function. Also, let $\phi\in\H_m$ for some $m\in\N$. Then from \eqref{finite-dimS}, we have 
		\begin{align}\label{S10}
			&\int_{0}^{T}\left(\frac{\d\v^n(t)}{\d t},\psi(t)\phi\right)\d t\nonumber\\&=-\mu\int_{0}^{T} (\A_n\v^n(t),\psi(t)\phi)\d t-\alpha\int_{0}^{T}(\v^n(t),\psi(t)\phi)\d t\nonumber\\&\quad-\int_{0}^{T}(\B_n(\v^n(t)+\z(t)),\psi(t)\phi)\d t-\beta\int_{0}^{T}(\mathcal{C}_n(\v^n(t)+\z(t)),\psi(t)\phi)\d t \nonumber\\&\quad+(\chi-\alpha)\int_{0}^{T}(\z_n(t),\psi(t)\phi)\d t+\int_{0}^{T}(\f_n,\psi(t)\phi)\d t,
		\end{align}
		where we have used an integration by parts. Our next goal is to pass the limit in \eqref{S10} as $n\to \infty$. Due to the choice of $\phi\in\H_m,$ for some $m\in\N$, we can say that there exists $R\in\N$ such that $\text{supp}\ \phi\subset\mathcal{O}_R.$ Since $\psi(\cdot)\phi\in\mathrm{L}^{2}(0,T;\V)\cap\mathrm{L}^{r+1}(0,T;\widetilde{\L}^{r+1})$, in view of \eqref{S8d}, we obtain 
		\begin{align}\label{S11}
			\int_{0}^{T}\left(\frac{\d\v^n(t)}{\d t},\psi(t)\phi\right)\d t-\int_{0}^{T}\left\langle\frac{\d\v(t)}{\d t},\psi(t)\phi\right\rangle\d t=\int_{0}^{T}\left\langle\frac{\d\v^n(t)}{\d t}-\frac{\d\v}{\d t},\psi(t)\phi\right\rangle\d t\to 0,
		\end{align}
		as $n\to \infty$. Since $\psi(\cdot)\phi\in\mathrm{L}^2(0,T;\L^2(\mathcal{O}_R))$, we obtain
		\begin{align}\label{S12}
			&\left|\int_{0}^{T}(\v^n(t),\psi(t)\phi)\d t-\int_{0}^{T}(\v(t),\psi(t)\phi)\d t\right|\nonumber\\&\leq\|\v^n-\v\|_{\mathrm{L}^2(0,T;\L^2(\mathcal{O}_R))}\|\psi(\cdot)\phi\|_{\mathrm{L}^2(0,T;\L^2(\mathcal{O}_R))}\to0 \ \text{ as } \ n\to\infty,
		\end{align}
		where we have used the strong convergence obtained in \eqref{S9}. Let us choose $n\geq m$ so that $\H_m\subset\H_n$ and $\P_n\phi=\phi$. Since $\psi(\cdot)\phi\in\mathrm{L}^2(0,T;\V)$, consider 
		\begin{align}\label{S13}
			&	\int_{0}^{T} (\A_n\v^n(t),\psi(t)\phi)\d t-	\int_{0}^{T} (\!(\v(t),\psi(t)\phi)\!)\d t\nonumber\\&=\int_{0}^{T} (\!(\v^n(t)-\v(t),\psi(t)\phi)\!)\d t\to0 \ \text{ as } \ n\to\infty,
		\end{align}
		where we have used the weak convergence given in \eqref{S8}. To prove the convergence of  third term in the right hand side of \eqref{S10}, we consider
		\begin{align}\label{S14}
			&\left|\int_{0}^{T}(\B_n(\v^n(t)+\z(t)),\psi(t)\phi)\d t-\int_{0}^{T}(\B(\v(t)+\z(t)),\psi(t)\phi)\d t\right|\nonumber\\&
			\leq \underbrace{\left|\int_{0}^{T}b(\v^n(t),\v^n(t),\psi(t)\phi)\d t-\int_{0}^{T}b(\v(t),\v(t),\psi(t)\phi)\d t\right|}_{:=B_1(n)}\nonumber\\&\qquad+\left|\int_{0}^{T}b(\z(t),\v^n(t)-\v(t),\psi(t)\phi)\d t+\int_{0}^{T}b(\v^n(t)-v(t),\z(t),\psi(t)\phi)\d t\right|\nonumber\\&
			\leq B_1(n)+2\int_{0}^{T}\|\z(t)\|_{\wi\L^{4}}\|\v^n(t)-\v(t)\|_{\L^{4}(\mathcal{O}_R)}\|\psi(t)\nabla\phi\|_{\wi\L^2}\d t, \nonumber\\&\leq
			\begin{cases}
				B_1(n)+C\int_{0}^{T}\|\z(t)\|_{\wi\L^{4}}\|\v^n(t)-\v(t)\|^{1/2}_{\L^2(\mathcal{O}_R)}\|\v^n(t)-\v(t)\|^{1/2}_{\V}\d t, \  \text{	for  } d=2 \text{	with  }  r\geq1 ,\\
				B_1(n)+C\int_{0}^{T}\|\z(t)\|_{\wi\L^{4}}\|\v^n(t)-\v(t)\|^{1/4}_{\L^2(\mathcal{O}_R)}\|\v^n(t)-\v(t)\|^{3/4}_{\V}\d t,\ \text{	for  } d=3 \text{	with  } r\geq 3,
			\end{cases}	\nonumber\\&
			\leq \begin{cases}
				B_1(n)+CT^{\frac{1}{4}} \|\z\|_{\mathrm{L}^{4}(0,T;\widetilde{\L}^{4})}\|\v^n-\v\|^{\frac{1}{2}}_{\mathrm{L}^2(0,T;\L^2(\mathcal{O}_R))}\left[\|\v^n\|^{\frac{1}{2}}_{\mathrm{L}^2(0,T;\V)}+\|\v\|^{\frac{1}{2}}_{\mathrm{L}^{2}(0,T;\V)}\right],\\ \hspace{35mm}\text{	for  } d=2 \text{	with  }  r\geq1,\\
				B_1(n)+CT^{\frac{1}{4}} \|\z\|_{\mathrm{L}^{4}(0,T;\widetilde{\L}^{4})}\|\v^n-\v\|^{\frac{1}{4}}_{\mathrm{L}^2(0,T;\L^2(\mathcal{O}_R))}\left[\|\v^n\|^{\frac{3}{4}}_{\mathrm{L}^2(0,T;\V)}+\|\v\|^{\frac{3}{4}}_{\mathrm{L}^{2}(0,T;\V)}\right],\\ \hspace{35mm} \text{	for  } d=3 \text{	with  }  r\geq3,
			\end{cases} \nonumber\\& \to0 \text{ as } n\to \infty,
		\end{align}
		where we have used the convergence from Lemma \ref{convergence_b*}, \eqref{lady} and \eqref{S9}. From Lemma \ref{convergence_c2_1}, we get 
		\begin{align}\label{S15}
			\int_{0}^{T}(\mathcal{C}_n(\v^n(t)+\z(t)),\psi(t)\phi)\d t&=\int_{0}^{T}\left\langle\mathcal{C}(\v^n(t)+\z(t)),\psi(t)\phi\right\rangle\d t\nonumber\\&\to \int_{0}^{T}\left\langle\mathcal{C}(\v(t)+\z(t)),\psi(t)\phi\right\rangle\d t\  \text{ as }\  n\to\infty.
		\end{align}
		Furthermore, it is immediate that
		\begin{align}\label{S16}
			&\int_{0}^{T}((\chi-\alpha)\z_n(t)+\f_n,\psi(t)\phi)\d t\nonumber\\&\to (\chi-\alpha)\int_{0}^{T}(\z(t),\psi(t)\phi)\d t+\int_{0}^{T}\langle\f,\psi(t)\phi\rangle\d t,
		\end{align}
		since $\|\mathrm{P}_n-\mathrm{I}\|_{\mathfrak{L}(\H)}\to 0$ as $n\to\infty$. 	Hence, on passing limit in \eqref{S10} as $n\to\infty$ with the help of \eqref{S11}-\eqref{S16}, we obtain
		\begin{align}\label{S17}
			&\int_{0}^{T}\left\langle\frac{\d\v(t)}{\d t},\psi(t)\phi\right\rangle\d t\nonumber\\&=-\mu\int_{0}^{T} \left\langle\A\v(t),\psi(t)\phi\right\rangle\d t-\alpha\int_{0}^{T}(\v(t),\psi(t)\phi)\d t\nonumber\\&\quad-\int_{0}^{T}\left\langle\B(\v(t)+\z(t)),\psi(t)\phi\right\rangle\d t-\beta\int_{0}^{T}\left\langle\mathcal{C}(\v(t)+\z(t)),\psi(t)\phi\right\rangle\d t \nonumber\\&\quad+(\chi-\alpha)\int_{0}^{T}(\z(t),\psi(t)\phi)\d t+\int_{0}^{T}\left\langle\f,\psi(t)\phi\right\rangle\d t.
		\end{align}
		Since \eqref{S17} holds for any $\phi\in\bigcup_{m=1}^{\infty}\H_m$ and $\bigcup_{m=1}^{\infty}\H_m$ is dense in $\V\cap\wi\L^{r+1}$, we have that \eqref{S17} holds true for all $\phi\in\V\cap\wi\L^{r+1}$ and $\psi\in\mathrm{C}^1([0,T])$. Hence $\v(\cdot)$ solves \eqref{W-CSCBF} and satisfies the first equation of \eqref{cscbf}. 
	
		Note that the embedding of $\H\subset\V'+\wi\L^{\frac{r+1}{r}}$ is continuous and $\v\in \mathrm{L}^{\infty}(0,T;\H)$ implies $\v\in \mathrm{L}^{\infty}(0,T;\V'+\wi\L^{\frac{r+1}{r}})$. Thus, we get $\v,\frac{\d\v}{\d t}\in \mathrm{L}^{\frac{r+1}{r}}(0,T;\V'+\wi\L^{\frac{r+1}{r}})$ and then invoking \cite[Section 5.9.2, Theorem 2]{LCE}, it is immediate that $\v\in\C([0,T];\V'+\wi\L^{\frac{r+1}{r}})$. Since $\H$ is reflexive, using  \cite[Proposition 1.7.1]{PCAM}, we obtain $\v\in\C_w([0,T];\H)$ and the map $t\mapsto\|\v(t)\|_{\H}$ is bounded, where $\C_w([0,T];\H)$ denotes the space of functions $\v:[0,T]\to\H,$ which are weakly continuous. 
		\vskip 2mm
		\noindent
		\textbf{Step II.} \emph{Energy equality:} Now, we show that $\v(\cdot)$ satisfies the energy equality \eqref{eeq} and $\v\in\C([0, T];\H)$. Note that such an energy equality is not immediate due to	the fact that $\frac{\d \v}{\d t} \in\mathrm{L}^{2}(0,T;\V')+\mathrm{L}^{\frac{r+1}{r}}(0,T;\widetilde{\L}^{\frac{r+1}{r}})$. In \cite{FHR,Mohan1,Mohan}, the authors established an approximation of $\v(\cdot)$ in bounded domains	such that the approximations are bounded and converge in both Sobolev and Lebesgue spaces simultaneously (cf. \cite{HR} for such an approximation in periodic domains). In particular, they approximate $\v(t)$, for each $t\in [0,T],$ by using the finite-dimensional space spanned by the first $n$ eigenfunctions of the Stokes operator $\A$. Since we are working on unbounded domains, we do not have the existence of eigenfunctions of the Stokes operator. Therefore, we use the eigenfunctions of operator $\mathcal{L}$ (cf. \eqref{L2} and \eqref{L3}) to obtain a sequence which approximates $\v(\cdot)$. Set 
		\begin{align}\label{3.32}
			\v_m(t):=\mathrm{P}_{1/m}\v(t)=\sum_{\mu_j<m^2}e^{-\mu_j/m}\langle\v(t),\boldsymbol{e}_j\rangle_{\mathbb{U}'\times\mathbb{U}} \boldsymbol{e}_j.
		\end{align}
		Since, for $s>2$, $\{\boldsymbol{e}_j\}_{j\in\N}\subset\D(\mathcal{L})\subset\V_s\subset\V\cap\wi\L^{r+1}$, one can obtain (cf. \cite{FHR,Mohan1,Mohan}) 
		\begin{align}
			\|\v_{m}-\v\|_{\mathrm{L}^{2}(0,T;\V)\cap\mathrm{L}^{r+1}(0,T;\widetilde{\L}^{r+1})}\to0 \ \text{ as }\  m\to\infty.
		\end{align}
		Now, we define $\mathcal{V}_{T}:=\{\boldsymbol{\phi}\in\mathrm{C}^{\infty}_{0}(\mathcal{O}\times[0,T)):\nabla\cdot\boldsymbol{\phi}(x,t)=0\}$. Observe that, for each $\boldsymbol{\phi}\in\mathcal{V}_{T}$, $\boldsymbol{\phi}(\cdot,T)=0$ and $\mathcal{V}_{T}$ is dense in $\mathrm{L}^{p}(0,T;\mathbb{H}^1(\mathcal{O})\cap\L^{r+1}(\mathcal{O}))$ (cf. \cite[Lemmas 2.5, 2.6]{GPG}). For $\v\in\mathrm{L}^p(0,T;\mathrm{X})$, $1\leq p<\infty$ and $T>h>0$, the mollifier $\v_h$ (in the sense of Friederichs) of $\v$ is defined by 
		\begin{align*}
			\v_h(t)=\int_{0}^{T}j_h(t-\zeta)\v(\zeta)\d\zeta,
		\end{align*}
		where $j_h(\cdot)$ is an infinite times differentiable function having support in $(-h,h)$, which is even and positive, such that $\int_{-\infty}^{+\infty}j_h(\zeta)\d\zeta=1$. In view of \cite[Lemma 2.5]{GPG}, we have that for $\v\in\mathrm{L}^p(0,T;\mathrm{X})$ with $1\leq p<\infty$, $\v_h\in\mathrm{C}^k([0,T];\mathrm{X})$ for all $k\geq0$ and
		\begin{align}\label{335}
			\lim_{h\to0}\|\v_h-\v\|_{\mathrm{L}^p(0,T;\mathrm{X})}=0.
		\end{align}
		Furthermore, if $\{\v_m\}_{m\in\N}\subset\mathrm{L}^p(0,T;\mathrm{X})$ converges to $\v$ in the norm of $\mathrm{L}^p(0,T;\mathrm{X})$, then
		\begin{align}\label{336}
			\lim_{m\to\infty}\|(\v_m)_h-\v_h\|_{\mathrm{L}^p(0,T;\mathrm{X})}=0.
		\end{align}
		We write the weak solution of \eqref{cscbf} as
		\begin{align}\label{337}
			\int_{0}^{t}\biggl\{\left\langle\frac{\d \v}{\d t}+\mu\A\v+\B(\v+\z)+\beta\mathcal{C}(\v+\z)-\f,\boldsymbol{\phi}\right\rangle+\left(\alpha\v+(\alpha-\chi)\z,\boldsymbol{\phi}\right)\biggr\}\d\zeta=0,
		\end{align}
		for all $t<T$ and $\boldsymbol{\phi}\in\mathcal{V}_{T}$. Choosing $\boldsymbol{\phi}=(\v_m)_h=:\v_{m,h}$ in \eqref{337}, where $(\cdot)_h$ is the mollification operator discussed above, for $0\leq t<T$, we get
		\begin{align}\label{338}
			\int_{0}^{t}\biggl\{\left\langle\frac{\d \v}{\d t},\v_{m,h}\right\rangle+\mu\left(\nabla\v,\nabla\v_{m,h}\right)&+\left\langle\B(\v+\z)+\beta\mathcal{C}(\v+\z)-\f,\v_{m,h}\right\rangle\nonumber\\&\quad+\left(\alpha\v+(\alpha-\chi)\z,\v_{m,h}\right)\biggr\}\d\zeta=0.
		\end{align}
		Using \eqref{336}, we obtain 
		\begin{align*}
			&\left|\int_{0}^{t}\left\langle\frac{\d \v}{\d t},\v_{m,h}-\v_{h}\right\rangle\d\zeta\right|\nonumber\\&\leq\left\|\frac{\d \v}{\d t}\right\|_{\mathrm{L}^{2}(0,T;\V')+\mathrm{L}^{\frac{r+1}{r}}(0,T;\widetilde{\L}^{\frac{r+1}{r}})}\|\v_{m,h}-\v_h\|_{\mathrm{L}^{2}(0,T;\V)\cap\mathrm{L}^{r+1}(0,T;\widetilde{\L}^{r+1})}\to0,
		\end{align*}
		and
		\begin{align*}
			\left|\int_{0}^{t}\left(\nabla \v,\nabla\v_{m,h}-\nabla\v_{h}\right)\d\zeta\right|\leq\| \v\|_{\mathrm{L}^{2}(0,T;\V)}\|\v_{m,h}-\v_h\|_{\mathrm{L}^{2}(0,T;\V)}\to0,
		\end{align*}
		as $m\to\infty$. Since $\v\in\mathrm{L}^{\infty}(0,T;\H)\cap\mathrm{L}^2(0,T;\V)\cap\mathrm{L}^{r+1}(0,T;\wi\L^{r+1})$ and $\z \in\mathrm{L}^{2}(0,T;\H)\cap\mathrm{L}^{r+1}(0,T;\wi\L^{r+1})\cap\mathrm{L}^{4}(0,T;\wi\L^{4})$ (see Remark \ref{RemarkI}), we have 
		\begin{align*}	
		&\left|\int_{0}^{t}\left\langle\B(\v+\z),\v_{m,h}-\v_{h}\right\rangle\d\zeta\right|\nonumber\\&\leq
		\begin{cases}
			\|\v+\z\|^{\frac{r+1}{r-1}}_{\mathrm{L}^{r+1}(0,T;\widetilde{\L}^{r+1})}\|\v+\z\|^{\frac{r-3}{r-1}}_{\mathrm{L}^{2}(0,T;\H)}\|\v_{m,h}-\v_h\|_{\mathrm{L}^{2}(0,T;\V)}, \text{	for  } d=2,3 \text{	with  } r\geq3,\\
			\bigg[\|\v\|^2_{\mathrm{L}^{\infty}(0,T;\H)}\|\v\|_{\mathrm{L}^{2}(0,T;\V)}+\|\z\|^2_{\mathrm{L}^{4}(0,T;\widetilde{\L}^4)}\bigg]\|\v_{m,h}-\v_h\|_{\mathrm{L}^{2}(0,T;\V)}, \text{	for  } d=2 \text{	with  } r\in[1,3),
		\end{cases}
	\nonumber\\&\to0 \ \text{ as } \ m\to\infty.
		\end{align*}
	Since $\v, \z \in\mathrm{L}^{r+1}(0,T;\wi\L^{r+1})$, we obtain
		\begin{align*}	\left|\int_{0}^{t}\left\langle\mathcal{C}(\v+\z),\v_{m,h}-\v_{h}\right\rangle\d\zeta\right|\leq \| \v+\z\|^r_{\mathrm{L}^{r+1}(0,T;\widetilde{\L}^{r+1})}\|\v_{m,h}-\v_h\|_{\mathrm{L}^{r+1}(0,T;\widetilde{\L}^{r+1})}\to0,
		\end{align*}
		as $m\to\infty$. Similarly, using \eqref{336}, we get
		\begin{align*}
			&\int_{0}^{t}\biggl\{-\left\langle\f,\v_{m,h}\right\rangle+\left(\alpha\v+(\alpha-\chi)\z,\v_{m,h}\right)\biggr\}\d\zeta\nonumber\\&\to		\int_{0}^{t}\biggl\{-\left\langle\f,\v_{h}\right\rangle+\left(\alpha\v+(\alpha-\chi)\z,\v_{h}\right)\biggr\}\d\zeta,\ \text{ as }\ m\to\infty.
		\end{align*}
		Hence, passing limit $m\to\infty$ in \eqref{338}, we obtain
		\begin{align}\label{339}
			\int_{0}^{t}\biggl\{\left\langle\frac{\d \v}{\d t},\v_{h}\right\rangle+\mu\left(\nabla\v,\nabla\v_{h}\right)&+\left\langle\B(\v+\z)+\beta\mathcal{C}(\v+\z)-\f,\v_{h}\right\rangle\nonumber\\&+\left(\alpha\v+(\alpha-\chi)\z,\v_{h}\right)\biggr\}\d\zeta=0.
		\end{align}
		Using \eqref{335} and similar arguments as above, we obtain the following convergence
		\begin{align}\label{340}
			&\lim_{h\to0}\int_{0}^{t}\biggl\{\mu\left(\nabla\v,\nabla\v_{h}\right)+\left\langle\B(\v+\z)+\beta\mathcal{C}(\v+\z)-\f,\v_{h}\right\rangle+\left(\alpha\v+(\alpha-\chi)\z,\v_{h}\right)\biggr\}\d\zeta\nonumber\\&=\int_{0}^{t}\biggl\{\mu\left(\nabla\v,\nabla\v\right)+\left\langle\B(\v+\z)+\beta\mathcal{C}(\v+\z)-\f,\v\right\rangle+\left(\alpha\v+(\alpha-\chi)\z,\v\right)\biggr\}\d\zeta.
		\end{align}
		Using integration by parts, we get
		\begin{align}\label{341}
			\int_{0}^{t}\left\langle\frac{\d \v}{\d t},\v_{h}\right\rangle\d\zeta&=-\int_{0}^{t}\left\langle\v,\frac{\d \v_h}{\d t}\right\rangle\d\zeta+\left(\v(0),\v_h(0)\right)-\left(\v(t),\v_h(t)\right)\nonumber\\&=-\int_{0}^{t}\int_{0}^{\zeta}\frac{\d j_h(\zeta-s)}{\d t}\left(\v(\zeta),\v(s)\right)\d s\d\zeta+\left(\v(0),\v_h(0)\right)-\left(\v(t),\v_h(t)\right)\nonumber\\&=\left(\v(0),\v_h(0)\right)-\left(\v(t),\v_h(t)\right)\nonumber\\&\to\left(\v(0),\v(0)\right)-\left(\v(t),\v(t)\right),
		\end{align}
		as $h\to0$, where we have used the property of mollifiers and the fact that the kernel $j_{h}(s)$ in the definition of mollifier is even in $(-h,h)$. From \eqref{339}-\eqref{341}, we infer that $\v(\cdot)$ satisfies the energy equality, that is, condition (iii) of Definition \ref{defn5.9}. Recalling that every weak solution of \eqref{cscbf} is $\H$-weakly continuous in time, all weak solutions satisfy the energy equality (condition (iii) of Definition \ref{defn5.9}) and so, all weak solutions of \eqref{cscbf} belong to $\C([0, T];\H)$ (cf. \cite{GPG,HR} also). Thus the condition (ii) in the Definition \ref{defn5.9} also makes sense.
		\vskip 2mm
		\noindent
		\textbf{Step III.}	\emph{Uniqueness:} Define $\mathfrak{F}=\v_1-\v_2$, where $\v_1$ and $\v_2$ are two weak solutions of the system \eqref{cscbf} in the sense of Definition \ref{defn5.9}. Then $\mathfrak{F}\in\mathrm{C}(0,T;\H)\cap\mathrm{L}^{2}(0,T;\V)\cap\mathrm{L}^{r+1}(0,T;\widetilde{\L}^{r+1})$ and satisfies
		\begin{equation}\label{Uni}
			\left\{
			\begin{aligned}
				\frac{\d\mathfrak{F}(t)}{\d t} &= -\mu \A\mathfrak{F} (t)-\alpha\mathfrak{F}(t)- \B(\v_1(t)+\z(t))+\B(\v_2(t)+\z(t))- \beta \mathcal{C}(\v_1 (t)+ \z(t)) \\&\quad+ \beta \mathcal{C}(\v_2 (t)+ \z(t)), \\
				\mathfrak{F}(0)&= \textbf{0},
			\end{aligned}
			\right.
		\end{equation}
		in the weak sense.	From the above equation, using  the energy equality, we obtain
		\begin{align}\label{U1}
			&	\frac{1}{2}\frac{\d}{\d t}\|\mathfrak{F}(t)\|^2_{\H}+\mu\|\mathfrak{F}(t)\|^2_{\V}+\alpha\|\mathfrak{F}(t)\|^2_{\H}+\beta\left\langle \mathcal{C}(\v_1 (t)+ \z(t)) - \mathcal{C}(\v_2 (t)+ \z(t)),\v_1(t)-\v_2(t)\right\rangle\nonumber\\&=-b(\v_1(t)+\z(t),\v_1(t)+\z(t),\mathfrak{F}(t)) +b(\v_2(t)+\z(t),\v_2(t)+\z(t),\mathfrak{F}(t))\nonumber\\&=b(\mathfrak{F}(t),\mathfrak{F}(t),\v_2(t)+\z(t)) \nonumber\\&\leq\begin{cases}
				\frac{\mu}{2}\|\mathfrak{F}(t)\|^{2}_{\V}+\frac{\beta}{2}\|\left|\v_2(t)+\z(t)\right|^{\frac{r-1}{2}}\mathfrak{F}(t)\|^2_{\H} +C\|\mathfrak{F}(t)\|^2_{\H},  &\text{ for } d=2,3 \text{	with  } r>3,\\
				\frac{1}{2\beta}\|\mathfrak{F}(t)\|^{2}_{\V}+\frac{\beta}{2}\|\left|\v_2(t)+\z(t)\right|\mathfrak{F}(t)\|^2_{\H}, &\text{ for } d=r=3 \text{	with  } 2\beta\mu\geq1,\\
				\frac{\mu}{2}\|\mathfrak{F}(t)\|^{2}_{\V}+C\|\v_2(t)+\z(t)\|^4_{\wi\L^4}\|\mathfrak{F}(t)\|^2_{\H},  &\text{ for } d=2 \text{	with  } r\in[1,3].
			\end{cases}
		\end{align}
		From \eqref{MO_c}, we have
		\begin{align}\label{U1'}
			-&\beta\left\langle \mathcal{C}(\v_1 (t)+ \z(t)) - \mathcal{C}(\v_2 (t)+ \z(t)),\v_1(t)-\v_2(t)\right\rangle\nonumber\\&\leq -\frac{\beta}{2}\|\left|\v_2(t)+\z(t)\right|^{\frac{r-1}{2}}\mathfrak{F}(t)\|^2_{\H} 
		\end{align}
		Therefore, \eqref{U1} and \eqref{U1'} imply that
		\begin{align}\label{U2}
			&\frac{\d}{\d t}\|\mathfrak{F}(t)\|^2_{\H}\leq \begin{cases}
				C\|\mathfrak{F}(t)\|^{2}_{\H}, &\text{ for } d=2,3 \text{	with  } r>3,\\
				0, &\text{ for } d=r=3 \text{ with } 2\beta\mu\geq1,\\
				C\|\v_2(t)+\z(t)\|^4_{\wi\L^4}\|\mathfrak{F}(t)\|^2_{\H},  &\text{ for } d=2 \text{	with  } r\in[1,3].
			\end{cases}
		\end{align}
		Applying Gronwall's inequality and using the fact that $\mathfrak{F}(0)=\textbf{0}$, we obtain that $\v_1(t)=\v_2(t)$, for all $t\in[0,T]$, which completes the proof of uniqueness.
	\end{proof}	
	
	\begin{remark}
	From \cite[Chapter II, Theorem 1.8]{CV}, $\v\in\mathrm{L}^{2}(0,T;\V)\cap\mathrm{L}^{r+1}(0,T;\widetilde{\L}^{r+1})$ and $\frac{\d \v}{\d t}\in\mathrm{L}^{2}(0,T;\V')+\mathrm{L}^{\frac{r+1}{r}}(0,T;\widetilde{\L}^{\frac{r+1}{r}})$ imply that $\v\in\mathrm{C}(0,T;\H)$ and satisfies the energy equality \eqref{eeq}.  The proof of Theorem 1.8 in \cite[Chapter II]{CV} requires a regularization technique which is not explicitly provided in  \cite{CV}, therefore we are not using Theorem 1.8 form \cite[Chapter II]{CV} directly.
	\end{remark}
	
	The following Theorem is general and we take $\f$ is dependent on $t$. 
	
	\begin{theorem}\label{RDS_Conti1}
		For all the cases given in Table \ref{Table1} and for some $T >0$ fixed, assume that $\boldsymbol{x}_n \to \boldsymbol{x}$ in $\H$, $\f_n \to \f \ \text{ in }\ \mathrm{L}^2 (0, T; \V')$,
		$$\z_n \to \z\ \text{ in }\ \mathrm{L}^2 (0, T; \H)\cap\mathrm{L}^{4} (0, T; \widetilde{\L}^{4}), \text{ for } r\in[1,3)$$
		and 
		\begin{align*}
			\z_n \to \z \ \text{ in }\  \mathrm{L}^2 (0, T; \H)\cap\mathrm{L}^{r+1} (0, T; \widetilde{\L}^{r+1}), \text{ for } r\geq3.
		\end{align*}
		Let us denote by $\v(t, \z)\boldsymbol{x},$ the solution of the system \eqref{cscbf} and by $\v(t, \z_n)\boldsymbol{x}_n,$  the solution of the system \eqref{cscbf} with $\z, \f, \boldsymbol{x}$ being replaced by $\z_n, \f_n, \boldsymbol{x}_n$. Then \begin{align}\label{5.20}\v(\cdot, \z_n)\boldsymbol{x}_n \to \v(\cdot, \z)\boldsymbol{x} \ \text{ in } \ \mathrm{C}([0,T];\H)\cap\mathrm{L}^2 (0, T;\V)\cap\mathrm{L}^{r+1}(0, T;\widetilde{\L}^{r+1}).\end{align}
		In particular, $\v(T, \z_n)\boldsymbol{x}_n \to \v(T, \z)\boldsymbol{x}$ in $\H$.
	\end{theorem}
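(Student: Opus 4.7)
The plan is to imitate the uniqueness argument in Step III of the proof of Theorem \ref{solution}, applied to the difference $\mathfrak{F}_n := \v(\cdot,\z_n)\x_n - \v(\cdot,\z)\x$, and then close by Gronwall's lemma. Writing $\v_n = \v(\cdot,\z_n)\x_n$, $\v = \v(\cdot,\z)\x$ and $\mathfrak{H}_n := \z_n - \z$, the function $\mathfrak{F}_n$ weakly solves
\begin{equation*}
\frac{d\mathfrak{F}_n}{dt} + \mu\A\mathfrak{F}_n + \alpha\mathfrak{F}_n = -\bigl[\B(\v_n+\z_n)-\B(\v+\z)\bigr] - \beta\bigl[\mathcal{C}(\v_n+\z_n)-\mathcal{C}(\v+\z)\bigr] + (\chi-\alpha)\mathfrak{H}_n + (\f_n-\f),
\end{equation*}
with initial datum $\mathfrak{F}_n(0)$ tending to $\mathbf{0}$ in $\H$ under the assumed hypotheses. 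Applying the energy equality proved in Step II of Theorem \ref{solution} to $\mathfrak{F}_n$ yields a starting identity for $\|\mathfrak{F}_n(t)\|_\H^2$.

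I would then split each nonlinear difference through an intermediate term, namely
\begin{equation*}
\B(\v_n+\z_n)-\B(\v+\z) = \bigl[\B(\v_n+\z_n)-\B(\v_n+\z)\bigr] + \bigl[\B(\v_n+\z)-\B(\v+\z)\bigr],
\end{equation*}
and likewise for $\mathcal{C}$. The second brackets, tested against $\mathfrak{F}_n=(\v_n+\z)-(\v+\z)$, combine into $\langle\G(\u_1)-\G(\u_2),\u_1-\u_2\rangle$ with $\u_1=\v_n+\z$, $\u_2=\v+\z$, and are controlled by the local monotonicity inequalities \eqref{fe2_1}--\eqref{fe2_3} of Theorem \ref{LocMon}, which bring in a weight $\tfrac{27}{32\mu^3}\|\v+\z\|_{\wi\L^4}^4\|\mathfrak{F}_n\|_\H^2$ (for $d=2$, $r\in[1,3]$), a constant $\eta\|\mathfrak{F}_n\|_\H^2$ (for $r>3$), or nothing at all (for $d=r=3$ with $2\beta\mu\geq 1$); simultaneously, the inequalities \eqref{MO_c}--\eqref{a215} furnish the coercive control $\tfrac{\beta}{2^{r-1}}\|\mathfrak{F}_n\|_{\wi\L^{r+1}}^{r+1}$ that will yield the $\mathrm{L}^{r+1}$-convergence. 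The first brackets are linear in $\mathfrak{H}_n$ through the bilinearity of $\B$ and Taylor's formula for $\mathcal{C}$ (as in Lemma \ref{convergence_c2_1}), and after testing against $\mathfrak{F}_n$ are bounded, via H\"older's and interpolation inequalities, by $\tfrac{\mu}{2}\|\mathfrak{F}_n\|_\V^2+\tfrac{\alpha}{2}\|\mathfrak{F}_n\|_\H^2$ plus terms of the form $C(\text{a priori bounds})\cdot\|\mathfrak{H}_n\|^q_{\mathrm{L}^q(0,T;\wi\L^q)}$ for $q\in\{2,4,r+1\}$, all of which vanish by hypothesis. The remaining $(\chi-\alpha)\mathfrak{H}_n$ and $\f_n-\f$ contributions are absorbed by Cauchy--Schwarz and Young's inequalities.

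Collecting the estimates and invoking Gronwall's lemma, with the weight $\|\v+\z\|_{\wi\L^4}^4\in\mathrm{L}^1(0,T)$ (a consequence of the a priori bounds in Step I of Theorem \ref{solution}, combined with Ladyzhenskaya's inequality \eqref{lady} in 2D or the interpolation of Remark \ref{RemarkI} in 3D with $r\geq 3$), gives the convergence $\mathfrak{F}_n\to\mathbf{0}$ in $\mathrm{C}([0,T];\H)\cap\mathrm{L}^2(0,T;\V)\cap\mathrm{L}^{r+1}(0,T;\wi\L^{r+1})$, whence in particular $\v(T,\z_n)\x_n\to\v(T,\z)\x$ in $\H$. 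The main obstacle is the case analysis in $r$: the three different regimes of Theorem \ref{LocMon} require different interpolations to absorb the nonlinear remainder into the viscous dissipation $\mu\|\mathfrak{F}_n\|_\V^2$, and for $r=3$ the borderline condition $2\beta\mu\geq 1$ is precisely what makes the cubic nonlinearity absorbable. Each case must be handled separately, but the blueprint above applies uniformly.
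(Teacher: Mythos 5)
Your proposal is correct and follows essentially the same route as the paper: an energy identity for the difference $\y_n=\v_n-\v$, the monotonicity/coercivity of $\mathcal{C}$ via \eqref{MO_c}--\eqref{a215} to retain the $\widetilde{\L}^{r+1}$ control, case-by-case absorption of the convective terms into $\mu\|\y_n\|_{\V}^2$ (Ladyzhenskaya for $d=2$, $r\in[1,3]$; interpolation for $r>3$; the $2\beta\mu\geq1$ balance for $d=r=3$), and Gronwall with an $\mathrm{L}^1(0,T)$ weight. The only cosmetic difference is that you split the nonlinear differences through the intermediate point $\v_n+\z$ so that the monotone part is evaluated at arguments differing exactly by $\mathfrak{F}_n$, whereas the paper expands directly and applies \eqref{MO_c}--\eqref{a215} at the full difference $\y_n+\hat{\z}_n$; both bookkeepings lead to the same estimates.
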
	
	\begin{proof}
		Let us introduce the following notations which help us to simplify the proof: 
		$$\v_n (\cdot) = \v(\cdot, \z_n)\boldsymbol{x}_n, \ \  \v(\cdot) = \v(\cdot, \z)\boldsymbol{x},\ \   \y_n (\cdot)= \v(\cdot, \z_n)\boldsymbol{x}_n - \v(\cdot, \z)\boldsymbol{x},$$     
		$$ \hat{ \z}_n = \z_n - \z, \ \  \hat{ \f}_n = \f_n - \f.$$
		Then $\y_n$ satisfies the following system:
		\begin{equation}\label{cscbf_n_1}
			\left\{
			\begin{aligned}
				\frac{\d\y_n}{\d t} &= -\mu \A\y_n-\alpha\y_n  - \B(\v_n  + \z_n ) + \B(\v + \z) - \beta \mathcal{C}(\v_n  + \z_n ) \\& \ \ \ + \beta \mathcal{C}(\v + \z) + (\chi-\alpha) \hat{ \z}_n + \hat{ \f}, \\
				\y_n(0)&= \boldsymbol{x}_n - \boldsymbol{x}.
			\end{aligned}
			\right.
		\end{equation}
		Multiplying by $\y_n(t)$ in the first equation of \eqref{cscbf_n_1} and integrating over $\mathcal{O}$, we obtain 
		\begin{align}\label{5.22}
			\frac{1}{2}& \frac{\d}{\d t}\|\y_n(t) \|^2_{\H}  \nonumber\\
			&=- \mu \|\y_n(t)\|^2_{\V}-\alpha\|\y_n(t)\|^2_{\H}+ b(\y_n(t), \y_n(t), \v_n(t)) + b(\y_n(t), \y_n(t), \z_n(t))  \nonumber\\&\quad+ b(\v(t),\y_n(t) , \hat{ \z}_n(t))+ b(\hat{ \z}_n(t), \y_n(t), \v_n(t)) + b(\hat{ \z}_n(t), \y_n(t), \z_n(t))  \nonumber\\&\quad+ b(\z(t), \y_n(t), \hat{ \z}_n(t))+ \beta\big\langle \mathcal{C}(\v_n(t)  + \z_n(t) ) ,  \hat{ \z}_n(t)\big\rangle -  \beta\big\langle \mathcal{C}(\v(t) + \z(t)),  \hat{ \z}_n(t)\big\rangle\nonumber\\&\quad- \beta\big\langle \mathcal{C}(\v_n (t) + \z_n(t) ) - \mathcal{C}(\v(t) + \z(t)), (\v_n(t)+\z_n(t))-(\v(t)+\z(t))\big\rangle  \nonumber\\&\quad+ (\chi-\alpha)\langle\hat{ \z}_n(t), \ \y_n(t)\rangle+ \langle\hat{ \f}_n(t), \y_n(t)\rangle, \ \text{ for a.e. }\ t\in [0,T].
		\end{align}
		We estimate trilinear terms $b(\cdot,\cdot,\cdot)$ for $d=2$ with $r\in[1,3]$, $d=2,3$ with $r>3$ and $d=r=3$ with $2\beta\mu\geq1$, separately.
		\vskip 2mm
		\noindent
		\textbf{Estimates for $b(\cdot,\cdot,\cdot):$} \emph{For $d=2$ with $r\in[1,3]$.} In view of the inequality \eqref{lady}, along with Young's and H\"older's inequalities, we calculate 
		\begin{align*}
			|b(\y_n, \y_n, \v_n)|&=|b(\y_n, \v_n, \y_n)|
			\leq \|\y_n\|^2_{\widetilde{\L}^{4}} \ \|\v_n\|_{\V} \leq \sqrt{2} \|\y_n\|_{\H} \|\y_n\|_{\V}\|\v_n\|_{\V}\\
			&\leq \frac{\mu}{8} \|\y_n\|^2_{\V} + C\|\v_n\|^2_{\V} \|\y_n\|^2_{\H},\\
			|b(\y_n, \y_n, \z_n)|&\leq \|\y_n\|_{\widetilde{\L}^4} \|\y_n\|_{\V} \|\z_n\|_{\widetilde{\L}^4}\leq 2^{1/4}\|\y_n\|^{1/2}_{\H} \|\y_n\|^{3/2}_{\V} \|\z_n\|_{\widetilde{\L}^4}\\
			&\leq \frac{\mu}{8} \|\y_n\|^2_{\V} + C \|\z_n\|^4_{\widetilde{\L}^4}\|\y_n\|^2_{\H},\\
			|b(\v,\y_n , \hat{ \z}_n)+b(\hat{ \z}_n, \y_n, \v_n)|&\leq (\|\v\|_{\widetilde{\L}^4}+\|\v_n\|_{\widetilde{\L}^4})\|\y_n\|_{\V}\|\hat{ \z}_n\|_{\widetilde{\L}^4}\\&\leq 2^{1/4}(\|\v\|^{1/2}_{\H}\|\v\|^{1/2}_{\V}+\|\v_n\|^{1/2}_{\H}\|\v_n\|^{1/2}_{\V})\|\y_n\|_{\V}\|\hat{ \z}_n\|_{\widetilde{\L}^4}\\
			&\leq\frac{\mu}{8}\|\y_n\|^2_{\V} + C (\|\v\|_{\H}\|\v\|_{\V}+\|\v_n\|_{\H}\|\v_n\|_{\V})\|\hat{ \z}_n\|^2_{\widetilde{\L}^4},\\
			|b(\hat{ \z}_n, \y_n, \z_n)+b(\z, \y_n, \hat{ \z}_n)|&\leq (\|\z_n\|_{\widetilde{\L}^4}+\|\z\|_{\widetilde{\L}^4})\|\hat{ \z}_n\|_{\widetilde{\L}^4} \|\y_n\|_{\V}\\& \leq\frac{\mu}{8}\|\y_n\|^2_{\V} + C (\|\z_n\|^2_{\widetilde{\L}^4}+\|\z\|^2_{\widetilde{\L}^4})\|\hat{ \z}_n\|^2_{\widetilde{\L}^4}.
		\end{align*}
		\vskip2mm
		\noindent
		\textit{For $d=2,3$ with $r>3$.} Using H\"older's, interpolation (Lemma \ref{Interpolation}) and Young's inequalities, we obtain
		\begin{align*}
			|b(\y_n+\hat{\z}_n, \y_n, \v_n+\z_n)|&\leq  \frac{\mu}{4}\|\y_n\|^2_{\V} + \frac{\beta}{4}\|\left|\v_n+\z_n\right|^{\frac{r-1}{2}}(\y_n+\hat{\z}_n)\|^2_{\H}+C\|\y_n\|^2_{\H}+C\|\hat{ \z}_n\|^2_{\H}\\
			|b(\v+\z,\y_n , \hat{ \z}_n)|&\leq (\|\v\|_{\widetilde{\L}^{\frac{2(r+1)}{r-1}}}+\|\z\|_{\widetilde{\L}^{\frac{2(r+1)}{r-1}}})\|\y_n\|_{\V}\|\hat{ \z}_n\|_{\widetilde{\L}^{r+1}}\\&\leq (\|\v\|^{\frac{2}{r-1}}_{\wi \L^{r+1}}\|\v\|^{\frac{r-3}{r-1}}_{\H}+\|\z\|^{\frac{2}{r-1}}_{\wi \L^{r+1}}\|\z\|^{\frac{r-3}{r-1}}_{\H})\|\y_n\|_{\V}\|\hat{ \z}_n\|_{\widetilde{\L}^{r+1}}\\
			&\leq\frac{\mu}{4}\|\y_n\|^2_{\V} + C( \|\v\|^{\frac{4}{r-1}}_{\wi\L^{r+1}}\|\v\|^{\frac{2(r-3)}{r-1}}_{\H}+ \|\z\|^{\frac{4}{r-1}}_{\wi\L^{r+1}}\|\z\|^{\frac{2(r-3)}{r-1}}_{\H})\|\hat{ \z}_n\|^2_{\widetilde{\L}^{r+1}}.
		\end{align*}
		\vskip2mm
		\noindent
		\textit{For  $d=r=3$ with $2\beta\mu\geq1$.} Using H\"older's and Young's inequalities, we obtain
		\begin{align*}
			|b(\y_n+\hat{\z}_n, \y_n, \v_n+\z_n)|&\leq  \frac{1}{2\beta}\|\y_n\|^2_{\V} + \frac{\beta}{2}\|\left|\v_n+\z_n\right|^{\frac{r-1}{2}}(\y_n+\hat{\z}_n)\|^2_{\H}\\
			|b(\v+\z,\y_n , \hat{ \z}_n)|&\leq (\|\v\|_{\widetilde{\L}^{4}}+\|\z\|_{\widetilde{\L}^{4}})\|\y_n\|_{\V}\|\hat{ \z}_n\|_{\widetilde{\L}^{4}}\\
			&\leq (\|\v\|_{\widetilde{\L}^{4}}+\|\z\|_{\widetilde{\L}^{4}})(\|\v_n\|_{\V}+\|\v\|_{\V})\|\hat{\z}_n\|_{\widetilde{\L}^{4}}.
		\end{align*}
		The following calculations hold true for $d=2$ with $r\in[1,\infty)$ and $d=3$ with $[3,\infty)$ (for $d=r=3$ with $2\beta\mu\geq1$). Using H\"older's and Young's inequalities, we infer
		\begin{align*}
			\big|\big\langle \mathcal{C}(\v_n   + \z_n  ) ,  \hat{ \z}_n \big\rangle\big|& \leq \|\v_n +\z_n \|^r_{\widetilde{\L}^{r+1}} \|\hat{ \z}_n \|_{\widetilde{\L}^{r+1}} \leq C \big(\|\v_n \|^r_{\widetilde{\L}^{r+1}}+\|\z_n \|^r_{\widetilde{\L}^{r+1}}\big)\|\hat{ \z}_n \|_{\widetilde{\L}^{r+1}},\\
			\big|\big\langle \mathcal{C}(\v   + \z  ) ,  \hat{ \z}_n \big\rangle\big|& \leq \|\v +\z \|^r_{\widetilde{\L}^{r+1}} \|\hat{ \z}_n \|_{\widetilde{\L}^{r+1}} \leq C \big(\|\v \|^r_{\widetilde{\L}^{r+1}}+\|\z \|^r_{\widetilde{\L}^{r+1}}\big)\|\hat{ \z}_n \|_{\widetilde{\L}^{r+1}},\\
			|(\chi-\alpha)(\hat{ \z}_n, \ \y_n)|& \leq \left|(\chi-\alpha)\right| \|\y_n\|_{\H} \|\hat{ \z}_n\|_{\H}\leq \frac{\alpha}{2}\|\y_n\|^2_{\H}+C\|\hat{ \z}_n\|^2_{\H},\\
			|	\langle\hat{ \f}_n, \ \y_n\rangle|& \leq  \|\y_n\|_{\V} \|\hat{ \f}_n\|_{\V'}\leq (\|\v_n\|_{\V}+\|\v\|_{\V}) \|\hat{ \f}_n\|_{\V'}.
		\end{align*}
		Making use of \eqref{MO_c} and \eqref{a215}, we obtain 
		\begin{align*}
			-&\beta\big\langle \mathcal{C}(\v_n  + \z_n ) - \mathcal{C}(\v + \z), (\v_n+\z_n)-(\v+\z)\big\rangle\\
			&\leq-\frac{\beta}{2}\|\left|\v_n+\z_n\right|^{\frac{r-1}{2}}(\y_n+\hat{\z}_n)\|^2_{\H}-\frac{\beta}{2}\|\left|\v+\z\right|^{\frac{r-1}{2}}(\y_n+\hat{\z}_n)\|^2_{\H}		
			\\&\leq-\frac{\beta}{4}\|\left|\v_n+\z_n\right|^{\frac{r-1}{2}}(\y_n+\hat{\z}_n)\|^2_{\H}-\frac{\beta}{4}\|\left|\v+\z\right|^{\frac{r-1}{2}}(\y_n+\hat{\z}_n)\|^2_{\H}\nonumber\\&\quad-\frac{\beta}{2^{2r}}\|\y_n\|_{\wi\L^{r+1}}^{r+1}+\frac{\beta}{2^{r}}\|\hat{\z}_n\|_{\wi\L^{r+1}}^{r+1}.
		\end{align*}
		We complete  the calculations for $d=2$ with $r\in[1,3]$ only. Other cases can be handled in a  similar way. Combining the above inequalities, from \eqref{5.22}, we deduce 
		\begin{align*}
			&\frac{\d}{\d t}\|\y_n(t) \|^2_{\H} + \mu \|\y_n(t)\|^2_{\V}+\frac{\beta}{2^{2r}}\|\y_n\|_{\wi\L^{r+1}}^{r+1} \nonumber\\& \leq C\|\v_n(t)\|^2_{\V} \|\y_n(t)\|^2_{\H} +C \|\z_n(t)\|^4_{\widetilde{\L}^4}\|\y_n(t)\|^2_{\H} + C \|\v(t)\|_{\H}\|\v(t)\|_{\V}\|\hat{ \z}_n(t)\|^2_{\widetilde{\L}^4} \\&\quad+ C \|\v_n(t)\|_{\H}\|\v_n(t)\|_{\V}\|\hat{ \z}_n(t)\|^2_{\widetilde{\L}^4}+ C \|\z(t)\|^2_{\widetilde{\L}^4}\|\hat{ \z}_n(t)\|^2_{\widetilde{\L}^4}+ C \|\z_n(t)\|^2_{\widetilde{\L}^4}\|\hat{ \z}_n(t)\|^2_{\widetilde{\L}^4}\\&\quad+ C \|\hat{ \z}_n(t)\|^2_{\H}+C\|\hat{\z}_n(t)\|_{\wi\L^{r+1}}^{r+1} + (\|\v_n(t)\|_{\V}+\|\v(t)\|_{\V}) \|\hat{\f}_n(t)\|_{\V'}\\&\quad+ C \big(\|\v_n (t)\|^r_{\widetilde{\L}^{r+1}}+\|\z_n(t) \|^r_{\widetilde{\L}^{r+1}} +  \|\v(t) \|^r_{\widetilde{\L}^{r+1}}+\|\z(t) \|^r_{\widetilde{\L}^{r+1}}\big)\|\hat{ \z}_n(t) \|_{\widetilde{\L}^{r+1}}, 
		\end{align*}
		for a.e. $t\in[0,T]$.	Now, integrating from $0$ to $t$ to the above inequality, we obtain
		\begin{align}\label{Energy_esti_n_2_1}
			&\|\y_n(t)\|^2_{\H} + \mu \int_{0}^{t} \|\y_n(s) \|^2_{\V}\d s+\frac{\beta}{2^{2r}}\int_{0}^{t}\|\y_n(s)\|_{\wi\L^{r+1}}^{r+1}\d s\nonumber\\&\leq \|\y_n(0)\|^2_{\H} + \int_{0}^{t} \upalpha_n (s) \|\y_n(s)\|^2_{\H}\ \d s + C \int_{0}^{t} \upbeta_n(s)\ \d s, 
		\end{align}
		for $ t\in[0, T]$,	where
		\begin{align*}
			\upalpha_n &= C\|\v_n\|^2_{\V} + C \|\z_n\|^4_{\widetilde{\L}^4}, \\
			\upbeta_n &= \|\v\|_{\H}\|\v\|_{\V}\|\hat{ \z}_n\|^2_{\widetilde{\L}^4} +  \|\v_n\|_{\H}\|\v_n\|_{\V}\|\hat{ \z}_n\|^2_{\widetilde{\L}^4}+  \|\z\|^2_{\widetilde{\L}^4}\|\hat{ \z}_n\|^2_{\widetilde{\L}^4}+  \|\z_n\|^2_{\widetilde{\L}^4}\|\hat{ \z}_n\|^2_{\widetilde{\L}^4} \\&\quad+ \big(\|\v_n \|^r_{\widetilde{\L}^{r+1}}+\|\z_n \|^r_{\widetilde{\L}^{r+1}}\big)\|\hat{ \z}_n \|_{\widetilde{\L}^{r+1}} + \big(\|\v \|^r_{\widetilde{\L}^{r+1}}+\|\z \|^r_{\widetilde{\L}^{r+1}}\big)\|\hat{ \z}_n \|_{\widetilde{\L}^{r+1}} \\&\quad+ \|\hat{ \z}_n\|^2_{\H}+\|\hat{\z}_n\|_{\wi\L^{r+1}}^{r+1} + (\|\v_n\|_{\V}+\|\v\|_{\V})\|\hat{\f}_n\|_{\V'}.
		\end{align*}
		Then by the Gronwall inequality, we find 
		\begin{align}\label{Energy_esti_n_3_1}
			\|\y_n(t)\|^2_{\H}
			&\leq \left(\|\y_n(0)\|^2_{\H}+ C \int_{0}^{T} \upbeta_n(s) \d s\right) e^{\int_{0}^{T} \upalpha_n (s) \d s},
		\end{align}
		for all $t\in [0,T]$.	On the other hand, we have
		\begin{align*}
			&\int_{0}^{T}\upbeta_n(s) \d s \\&=\int_{0}^{T} \bigg[\|\v(s)\|_{\H}\|\v(s)\|_{\V}\|\hat{ \z}_n(s)\|^2_{\widetilde{\L}^4} +  \|\v_n(s)\|_{\H}\|\v_n(s)\|_{\V}\|\hat{ \z}_n(s)\|^2_{\widetilde{\L}^4} +  \|\z(s)\|^2_{\widetilde{\L}^4}\|\hat{ \z}_n(s)\|^2_{\widetilde{\L}^4}\\&\quad+  \|\z_n(s)\|^2_{\widetilde{\L}^4}\|\hat{ \z}_n(s)\|^2_{\widetilde{\L}^4}+ \|\hat{ \z}_n(s)\|^2_{\H}+ (\|\v_n(s)\|_{\V}+\|\v(s)\|_{\V})\|\hat{\f}_n(s)\|_{\V'} +\|\hat{\z}_n(s)\|^{r+1}_{\wi\L^{r+1}}\\&\quad+  \big(\|\v_n(s) \|^r_{\widetilde{\L}^{r+1}}+\|\z_n(s) \|^r_{\widetilde{\L}^{r+1}}+\|\v(s) \|^r_{\widetilde{\L}^{r+1}}+\|\z(s) \|^r_{\widetilde{\L}^{r+1}}\big)\|\hat{ \z}_n(s) \|_{\widetilde{\L}^{r+1}} \bigg]\d s\\ &\leq  \bigg[\|\v\|_{\mathrm{L}^{\infty}(0, T; \H)} \|\v\|_{\mathrm{L}^2(0, T; \V)} + \|\v_n\|_{\mathrm{L}^{\infty}(0, T; \H)} \|\v_n\|_{\mathrm{L}^2(0, T; \V)}+ \|\z\|^2_{\mathrm{L}^4(0, T, \widetilde{\L}^4)} \\ &\quad+ \|\z_n\|^2_{\mathrm{L}^4(0, T, \widetilde{\L}^4)}\bigg]\|\hat{ \z}_n\|^2_{\mathrm{L}^4(0, T, \widetilde{\L}^4)} + \bigg[\|\v_n\|^r_{\mathrm{L}^{r+1}(0, T, \widetilde{\L}^{r+1})}+\|\z_n\|^r_{\mathrm{L}^{r+1}(0, T, \widetilde{\L}^{r+1})}\\&\quad+\|\v\|^r_{\mathrm{L}^{r+1}(0, T, \widetilde{\L}^{r+1})}+\|\z\|^r_{\mathrm{L}^{r+1}(0, T, \widetilde{\L}^{r+1})}\bigg]\|\hat{ \z}_n\|_{\mathrm{L}^{r+1}(0, T, \widetilde{\L}^{r+1})}+  \|\hat{ \z}_n\|^2_{\mathrm{L}^2(0, T, \H)}\\& \quad+ \|\hat{ \z}_n\|^{r+1}_{\mathrm{L}^{r+1}(0, T, \widetilde{\L}^{r+1})}+  \big(\|\v_n\|^2_{\mathrm{L}^2(0, T, \V)}+\|\v\|^2_{\mathrm{L}^2(0, T, \V)}\big)\|\f_n\|^2_{\mathrm{L}^2(0, T, \V')}.
		\end{align*}
		Making use of Remark \ref{RemarkI}, we obtain for $r\in[1,3]$, $\z_n\to\z$ in $\mathrm{L}^{r+1}(0, T, \widetilde{\L}^{r+1})$. Therefore, form previous arguments and our assumptions, we have that for $r\in[1,3]$, $\z_n\to\z$ in $\mathrm{L}^{2}(0, T, \H)\cap\mathrm{L}^{4}(0, T, \widetilde{\L}^{4})\cap\mathrm{L}^{r+1}(0, T, \widetilde{\L}^{r+1})$, and hence $\int_{0}^{T} \upbeta_n(s)\ \d s \to 0$ as $n\to \infty.$ Furthermore, we find  
		\begin{align*}
			\int_{0}^{T}\upalpha_n(s) \d s &\leq C\int_{0}^{T} \left[\|\v_n(s)\|^2_{\V} +  \|\z_n(s)\|^4_{\widetilde{\L}^4}\right] \d s= C\left[\|\v_n\|^2_{\mathrm{L}^2(0, T; \V)} +  \|\z_n\|^4_{\mathrm{L}^4(0, T; \widetilde{\L}^4)}\right] ,
		\end{align*}
		which is finite.	Since, $\|\y_n(0)\|_{\H} = \|\boldsymbol{x}_n- \boldsymbol{x}\|_{\H} \to 0 $ and $\int_{0}^{T} \upbeta_n(s)\ \d s \to 0$ as $n\to \infty$ and for all $n\in \mathbb{N},$ $\int_{0}^{T}\upalpha_n(s)\ \d s < \infty$, then \eqref{Energy_esti_n_3_1} asserts that $\|\y_n(t)\|_{\H}\to 0$ as $n\to\infty$ uniformly in $t\in[0, T].$ Since $\v_n(\cdot)$ and $\v(\cdot)$ are continuous, we further have  $$\v(\cdot, \z_n)\boldsymbol{x}_n \to \v(\cdot, \z)\boldsymbol{x}\  \text{ in } \ \C([0, T]; \H).$$
		By \eqref{Energy_esti_n_2_1}, we also get
		\begin{align*}
			&	\mu \int_{0}^{T} \|\y_n(s)\|^2_{\V}\d s+\frac{\beta}{2^{2r}}\int_{0}^{T}\|\y_n(s)\|_{\wi\L^{r+1}}^{r+1}\d s\nonumber\\
			& \leq \|\y_n(0)\|^2_{\H} + \sup_{s  \in [0, T]}\|\y_n(s)\|^2_{\H}\int_{0}^{T} \upalpha_n (s) \ \d s + \frac{8}{\mu} \int_{0}^{T} \upbeta_n(s)\ \d s\to 0,
		\end{align*}
		as $n\to \infty$ and therefore, $\v(\cdot, \z_n)\boldsymbol{x}_n \to \v(\cdot, \z)\boldsymbol{x}$ in $\mathrm{L}^2(0, T; \V)\cap\mathrm{L}^{r+1}(0, T; \widetilde{\L}^{r+1}),$ which completes the proof.
	\end{proof}
	\begin{definition}
		We define a map $\varphi_{\chi} : \mathbb{R}^+ \times \Omega_i \times \H \to \H$ by
		\begin{align}
			(t, \omega, \boldsymbol{x}) \mapsto \v^{\chi}(t)  + \z_{\chi}(\omega)(t) \in \H,
		\end{align}
		where $\v^{\chi}(t) = \v(t, \z_{\chi}(\omega)(t))(\boldsymbol{x} - \z_{\chi}(\omega)(0))$ is a solution to the system \eqref{cscbf} with the initial condition $\boldsymbol{x} - \z_{\chi}(\omega)(0).$
	\end{definition}
	\begin{proposition}\label{alpha_ind}
		If $\chi_1, \chi_2 \geq 0$, then $\varphi_{\chi_1} = \varphi_{\chi_2}.$
	\end{proposition}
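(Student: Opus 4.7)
The plan is to show that although the random dynamical systems $\varphi_{\chi_1}$ and $\varphi_{\chi_2}$ are built from different Ornstein--Uhlenbeck splittings, they reconstruct the same full trajectory of the SCBF system. Since by definition $\varphi_{\chi}(t,\omega,\x) = \v^{\chi}(t) + \z_{\chi}(\omega)(t)$, it suffices to establish
\[
\v^{\chi_1}(t) + \z_{\chi_1}(\omega)(t) = \v^{\chi_2}(t) + \z_{\chi_2}(\omega)(t), \qquad t\geq 0.
\]
I would introduce $\wi\v(t) := \v^{\chi_1}(t) + \z_{\chi_1}(\omega)(t) - \z_{\chi_2}(\omega)(t)$ and verify that $\wi\v$ solves the Cauchy problem \eqref{cscbf} with the parameter $\chi$ replaced by $\chi_2$ and initial datum $\x - \z_{\chi_2}(\omega)(0)$. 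Once this is done, the uniqueness part of Theorem \ref{solution} forces $\wi\v \equiv \v^{\chi_2}$, from which $\varphi_{\chi_1}(t,\omega,\x) = \wi\v(t) + \z_{\chi_2}(\omega)(t) = \varphi_{\chi_2}(t,\omega,\x)$ follows at once.

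The initial condition matches by a direct computation: $\wi\v(0) = (\x-\z_{\chi_1}(0))+\z_{\chi_1}(0)-\z_{\chi_2}(0) = \x-\z_{\chi_2}(0)$. For the evolution equation, the main step is to differentiate $\wi\v$ and combine the equation \eqref{cscbf} for $\v^{\chi_1}$ with Corollary \ref{Diff_z1} for $\z_{\chi_1}-\z_{\chi_2}$. The Stokes contributions regroup as $-\mu\A\v^{\chi_1}-\mu\A(\z_{\chi_1}-\z_{\chi_2}) = -\mu\A\wi\v$; the nonlinear terms $\B(\v^{\chi_1}+\z_{\chi_1})$ and $\mathcal{C}(\v^{\chi_1}+\z_{\chi_1})$ are unchanged because $\v^{\chi_1}+\z_{\chi_1}=\wi\v+\z_{\chi_2}$; and the remaining lower-order terms collapse via
\[
-\alpha\v^{\chi_1} + (\chi_1-\alpha)\z_{\chi_1} - \chi_1\z_{\chi_1} + \chi_2\z_{\chi_2} = -\alpha(\v^{\chi_1}+\z_{\chi_1}) + \chi_2\z_{\chi_2} = -\alpha\wi\v + (\chi_2-\alpha)\z_{\chi_2},
\]
where the $\chi_1\z_{\chi_1}$ contributions from the two source equations cancel exactly. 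The resulting equation for $\wi\v$ is precisely \eqref{cscbf} with $\chi=\chi_2$.

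The delicate point is ensuring that $\wi\v$ meets the regularity required by Definition \ref{defn5.9} so that Theorem \ref{solution} applies. Since $\v^{\chi_1}$ already lies in $\C([0,\infty);\H)\cap\mathrm{L}^{2}_{\mathrm{loc}}([0,\infty);\V)\cap\mathrm{L}^{r+1}_{\mathrm{loc}}([0,\infty);\widetilde{\L}^{r+1})$, the task reduces to showing the same for $\z_{\chi_1}-\z_{\chi_2}$. I would exploit the fact that, by Corollary \ref{Diff_z1}, the difference satisfies the deterministic linear equation $\partial_t(\z_{\chi_1}-\z_{\chi_2})+\mu\A(\z_{\chi_1}-\z_{\chi_2}) = -(\chi_1\z_{\chi_1}-\chi_2\z_{\chi_2})$ whose right-hand side, in view of Proposition \ref{SOUP1} and the continuous embedding $\mathrm{X}_i\hookrightarrow\H\cap\widetilde{\L}^{r+1}$, is locally integrable with values in $\H\cap\widetilde{\L}^{r+1}$. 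The analytic smoothing of the Stokes semigroup then lifts $\z_{\chi_1}-\z_{\chi_2}$ into the required function space, as is standard for Stokes-type equations with sufficiently regular forcing. With this check in hand, Theorem \ref{solution} gives $\wi\v=\v^{\chi_2}$ and thereby $\varphi_{\chi_1}=\varphi_{\chi_2}$. The algebraic identification of the two equations is essentially bookkeeping once one spots the cancellation of the $\chi_1\z_{\chi_1}$ terms; I expect this regularity verification for the difference of Ornstein--Uhlenbeck processes to be the main, though mild, obstacle.
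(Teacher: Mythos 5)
Your argument is correct, and it pivots on exactly the same cancellation the paper uses: adding Corollary \ref{Diff_z1} to the equation for $\v^{\chi_1}$ kills the $\chi_1\z_{\chi_1}$ contributions and leaves $(\chi_2-\alpha)\z_{\chi_2}$, while $\B$ and $\mathcal{C}$ are untouched because $\v^{\chi_1}+\z_{\chi_1}=(\v^{\chi_1}+\z_{\chi_1}-\z_{\chi_2})+\z_{\chi_2}$. The difference is in how the two arguments close. You identify $\v^{\chi_1}+\z_{\chi_1}-\z_{\chi_2}$ as a solution of \eqref{cscbf} with parameter $\chi_2$ and datum $\x-\z_{\chi_2}(0)$ and then quote the uniqueness part of Theorem \ref{solution}; the paper instead forms $\u^{\chi_i}=\v^{\chi_i}+\z_{\chi_i}$, writes the equation satisfied by $\u^{\chi_1}-\u^{\chi_2}$ (which vanishes at $t=0$), pairs it with itself, and concludes via the local monotonicity of $\G$ (Theorem \ref{LocMon}) together with Gronwall's inequality --- in effect re-running the uniqueness computation rather than citing it. Your route is shorter on paper but shifts the burden onto the regularity check for $\z_{\chi_1}-\z_{\chi_2}$, which you correctly flag: continuity in $\H$ and membership in $\mathrm{L}^{r+1}_{\mathrm{loc}}(\widetilde{\L}^{r+1})$ (resp.\ $\mathrm{L}^{4}_{\mathrm{loc}}(\widetilde{\L}^{4})$) come for free from $\z_{\chi_i}\in C_{1/2}(\R,\mathrm{X}_i)$, and the only genuinely new item is $\mathrm{L}^{2}_{\mathrm{loc}}(\V)$, for which the plain energy identity for the linear equation \eqref{Dif_z1} (with $\H$-valued, locally square-integrable right-hand side and $\H$-valued initial datum) already suffices --- the full analytic-smoothing machinery is not needed. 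One small point worth making explicit: Definition \ref{defn5.9} includes the energy equality as clause (iii), so before invoking uniqueness you should note that Step II of the proof of Theorem \ref{solution} shows every weak solution with the stated regularity automatically satisfies it. The paper's version needs the same regularity implicitly to justify differentiating $\|\u^{\chi_1}-\u^{\chi_2}\|_{\H}^2$, so neither approach escapes that verification; the trade is simply between redoing the Gronwall estimate and verifying the hypotheses of the uniqueness theorem.
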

	\begin{proof}
		Let us fix $\boldsymbol{x}\in \H.$ We need to prove that $$\v^{\chi_1}(t) + \z_{\chi_1}(t) = \v^{\chi_2}(t) + \z_{\chi_2}(t), \ \ \ t\geq 0, $$ where $\z_{\chi}$ is defined by \eqref{DOu1} and $\v^{\chi}$ is a solution to the system \eqref{cscbf}. From \eqref{cscbf}, we infer that $\v^{\chi_1}(0) - \v^{\chi_2}(0) = - (\z_{\chi_1}(0) - \z_{\chi_2}(0)) $ and 
		\begin{align*}
			\frac{\d(\v^{\chi_1}(t) - \v^{\chi_2}(t))}{\d t} = &-\mu \A(\v^{\chi_1}(t) - \v^{\chi_2}(t))-\alpha(\v^{\chi_1}(t) - \v^{\chi_2}(t)) \nonumber\\&+ \big[(\chi_1-\alpha) \z_{\chi_1}(t) -  (\chi_2-\alpha) \z_{\chi_2}(t)\big]\nonumber\\ &- [\B(\v^{\chi_1}(t) + \z_{\chi_1}(t))-\B(\v^{\chi_2}(t) + \z_{\chi_2}(t))] \nonumber\\&- \beta [\mathcal{C}(\v^{\chi_1}(t) + \z_{\chi_1}(t)) - \mathcal{C}(\v^{\chi_2}(t) + \z_{\chi_2}(t))], 
		\end{align*}
		for a.e. $t\in[0,T]$	in $\V'+\wi\L^{\frac{r+1}{r}}$.	Adding the equation \eqref{Dif_z1} to the above equation, we obtain 
		\begin{align}\label{Enrgy}
			\frac{\d(\u^{\chi_1}(t) - \u^{\chi_2}(t))}{\d t}& = -\mu \A(\u^{\chi_1}(t) - \u^{\chi_2}(t)) - [\B(\u^{\chi_1}(t))-\B(\u^{\chi_2}(t))] \nonumber\\&\quad-\alpha(\u^{\chi_1}(t) - \u^{\chi_2}(t)) - \beta [\mathcal{C}(\u^{\chi_1}(t)) - \mathcal{C}(\u^{\chi_2}(t) )], 
		\end{align}
		for a.e. $t\in[0,T]$	in $\V'+\wi\L^{\frac{r+1}{r}}$,	where $\u^{\chi_1}(t)=\v^{\chi_1}(t) + \z_{\chi_1}(t),   \u^{\chi_2}(t) = \v^{\chi_2}(t) + \z_{\chi_2}(t), \  t\geq 0$ and $\u^{\chi_1}(0) - \u^{\chi_2}(0)= \mathbf{0}.$ 
		
		Taking the inner product with $\u^{\chi_1}(t) - \u^{\chi_2}(t)$ in \eqref{Enrgy} and using Theorem \ref{LocMon}, we obtain
		\begin{align*}
			\frac{\d}{\d t} \|\u^{\chi_1}(t) - \u^{\chi_2}(t)\|^2_\H\leq  \begin{cases}
				\frac{27}{16\mu ^3}\|\u_2 (t)\|^4_{\widetilde{\L}^4}\|\u_2(t)-\u_2(t)\|_{\H}^2, &\text{ for } d=2 \text{ with }r\in[1,3]\\
				\eta\|\u_2(t)-\u_2(t)\|_{\H}^2, &\text{ for } d=2,3 \text{ with } r>3, \\
				0, &\text{ for } d=r=3 \text{ with } 2\beta\mu\geq1,
			\end{cases} 
		\end{align*}
		for a.e. $t\in[0,T]$. 	Since $\frac{27}{16\mu ^3}\int_{0}^{t}\|\u_2 (\tau)\|^4_{\widetilde{\L}^4} \d\tau< \infty$ (for $d=2$ with $r\in[1,3]$) and $\|\u^{\chi_1}(0) - \u^{\chi_2}(0)\|^2_\H = 0,$ by applying the Gronwall inequality, we deduce  that $\|\u^{\chi_1}(t) - \u^{\chi_2}(t)\|^2_\H = 0$, for all $t\geq 0$, which completes the proof.
	\end{proof}
	It is proved in Proposition \ref{alpha_ind} that the map $\varphi_{\chi}$ does not depend on $\chi$ and hence, from now onward, it will be denoted by $\varphi$. A proof of the following result is similar to that in \cite[Theorem 6.15]{BL} and hence we omit it here.

	\begin{theorem}
		$(\varphi, \theta)$ is an RDS.
	\end{theorem}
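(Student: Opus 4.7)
The plan is to verify the three defining properties of a random dynamical system for the map $\varphi$. Since Proposition \ref{alpha_ind} frees us to pick any convenient parameter $\chi\geq 0$, I work with a fixed $\chi$ and the representation $\varphi(t,\omega,\boldsymbol{x})=\v^{\chi}(t)+\z_{\chi}(\omega)(t)$, where $\v^{\chi}$ is the unique weak solution of the random equation \eqref{cscbf} with initial data $\boldsymbol{x}-\z_{\chi}(\omega)(0)$ and driving term $\z_\chi(\omega)$, as provided by Theorem \ref{solution}. The three items to be checked are the cocycle identity $\varphi(t+s,\omega,\boldsymbol{x})=\varphi(t,\theta_s\omega,\varphi(s,\omega,\boldsymbol{x}))$, the continuity of $\boldsymbol{x}\mapsto\varphi(t,\omega,\boldsymbol{x})$, and the joint $(\mathcal{B}(\mathbb{R}^+)\otimes\hat{\mathcal{F}}_i\otimes\mathcal{B}(\H),\mathcal{B}(\H))$-measurability.

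For the cocycle identity, the key tool is the stationarity relation \eqref{stationary}, namely $\z_{\chi}(\theta_s\omega)(t)=\z_{\chi}(\omega)(t+s)$. Evaluating the right-hand side, the inner argument simplifies using
\[
\varphi(s,\omega,\boldsymbol{x})-\z_{\chi}(\theta_s\omega)(0)=\v^{\chi}(s)+\z_\chi(\omega)(s)-\z_\chi(\omega)(s)=\v^{\chi}(s),
\]
so the outer evolution starts from $\v^{\chi}(s)$ and is driven by $\z_{\chi}(\theta_s\omega)(\cdot)=\z_\chi(\omega)(\cdot+s)$. Performing the time-shift $\tau\mapsto \tau+s$ in the integral form of the transformed equation, one sees that the function $\widetilde{\v}(t):=\v^{\chi}(t+s)$ satisfies exactly the shifted equation with the correct initial data. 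Invoking the uniqueness part of Theorem \ref{solution}, the two solutions coincide, and adding $\z_\chi(\omega)(t+s)$ recovers $\varphi(t+s,\omega,\boldsymbol{x})$.

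Continuity of $\boldsymbol{x}\mapsto\varphi(t,\omega,\boldsymbol{x})$ in $\H$ for fixed $(t,\omega)$ is an immediate corollary of Theorem \ref{RDS_Conti1}: if $\boldsymbol{x}_n\to\boldsymbol{x}$ in $\H$, then the translated initial data $\boldsymbol{x}_n-\z_\chi(\omega)(0)\to \boldsymbol{x}-\z_\chi(\omega)(0)$ in $\H$, and keeping $\z=\z_n$ and $\f=\f_n$ fixed throughout, the conclusion gives convergence of $\v^\chi$ at time $t$ in $\H$; adding the constant vector $\z_\chi(\omega)(t)$ preserves continuity.

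The measurability is the principal technical obstacle and will be handled by a two-step continuity $+$ separability argument. First, using the explicit representation \eqref{DOu1} of $\z_{\chi}$ together with \eqref{O-U_conti} from Proposition \ref{Ap}, the map
\[
\Omega_i\ni\omega\ \longmapsto\ \bigl(\z_\chi(\omega)(0),\,\z_\chi(\omega)(t),\,\z_\chi(\omega)|_{[0,t]}\bigr)\in \H\times\H\times\bigl(\mathrm{L}^2(0,t;\H)\cap\mathrm{L}^{r+1}(0,t;\widetilde{\L}^{r+1})\bigr)
\]
is continuous, and therefore Borel measurable. Composing with the solution map $(\boldsymbol{x},\z)\mapsto \v^\chi(t;\boldsymbol{x},\z)$ from $\H\times(\mathrm{L}^2(0,t;\H)\cap\mathrm{L}^{r+1}(0,t;\widetilde{\L}^{r+1}))$ into $\H$, which is continuous by Theorem \ref{RDS_Conti1}, we obtain that $(\omega,\boldsymbol{x})\mapsto\varphi(t,\omega,\boldsymbol{x})$ is jointly continuous, hence jointly Borel measurable. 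Joint measurability in $t$ follows because $t\mapsto\varphi(t,\omega,\boldsymbol{x})$ is continuous with values in $\H$ (as $\v^\chi\in\C([0,\infty);\H)$ from Theorem \ref{solution} and $\z_\chi\in\C_{1/2}(\mathbb{R},\mathrm{X}_i)$), so a standard Carathéodory-type measurability result gives the required $(\mathcal{B}(\mathbb{R}^+)\otimes\hat{\mathcal{F}}_i\otimes\mathcal{B}(\H),\mathcal{B}(\H))$-measurability. Combined with the cocycle identity just established, this completes the proof that $(\varphi,\theta)$ is an RDS on $\H$ over the MDS $(\Omega_i,\hat{\mathcal{F}}_i,\hat{\mathbb{P}}_i,\hat{\theta})$.
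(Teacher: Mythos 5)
Your proposal is correct and takes essentially the same approach as the paper: the paper omits the proof by deferring to Theorem 6.15 of \cite{BL}, but the intended argument is exactly yours, namely that continuity and measurability follow from Theorem \ref{RDS_Conti1} combined with the continuity of $\omega\mapsto\z_{\chi}(\omega)$ from Proposition \ref{Ap}, while the cocycle identity is obtained from the stationarity relation \eqref{stationary} together with the uniqueness of solutions to \eqref{cscbf} established in Theorem \ref{solution}.
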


	\section{Random attractors for SCBF equations}\label{sec6}\setcounter{equation}{0}
In this section, we prove our main results of this work. Here, the RDS $\varphi$ is considered over the MDS $(\Omega_i, \hat{\mathcal{F}}_i, \hat{\mathbb{P}}_i, \hat{\theta})$.	The results that we have obtained in the previous sections  provide a unique solution to the system \eqref{S-CBF}, which is continuous with respect to the data (particularly $\u_0$ and $\f$). Furthermore, if we define, for $\u_0 \in \H,\ \omega \in \Omega_i,$ and $t\geq s,$
	\begin{align}\label{combine_sol}
		\u(t, s;\omega, \u_0) := \varphi(t-s; \theta_s \omega)\u_0 = \v\big(t, s; \omega, \u_0 - \z(s)\big) + \z(t),
	\end{align}
	then the process $\{\u(t): \ t\geq s\},$ is a solution to the system \eqref{S-CBF}, for each $s\in \mathbb{R}$ and each $\u_0 \in \H$.

	\begin{lemma}\label{RA1}
		Suppose that $\v$ solves the system \eqref{cscbf} on the time interval $[a, \infty)$ with $\z \in \mathrm{L}^{4}_{\emph{loc}}( \R^+;\widetilde{\L}^{4})\cap \mathrm{L}^2_{\emph{loc}}(\R^+, \H)$ (for $r\in[1,3)$) and with $\z \in \mathrm{L}^{r+1}_{\emph{loc}}( \R^+; \widetilde{\L}^{r+1})\cap \mathrm{L}^2_{\emph{loc}}(\R^+, \H)$ (for $r\geq3$), and $\chi\geq 0.$ Then, for any $t\geq \tau \geq a,$
		\begin{align}\label{Energy_esti1}
			\|\v(t)\|^2_{\H} \leq \begin{cases}
				\|\v(\tau)\|^2_{\H}\  e^{-2\alpha(t-\tau) +R \int_{\tau}^{t}\|\z(s)\|^{4}_{\widetilde{\L}^{4}}\d s}  + C\int_{\tau}^{t} \bigg[\|\z(s)\|^{2}_{\H}+\|\z(s)\|^{4}_{\widetilde{\L}^{4}}+\|\z(s)\|^{r+1}_{\widetilde{\L}^{r+1}}  \\ \qquad\qquad+ \|\f\|^2_{\V'}\bigg] e^{-2\alpha(t-s) +R \int_{s}^{t}\|\z(\zeta)\|^{4}_{\widetilde{\L}^{4}}\d\zeta} \d s,\qquad \text{ for } d=2 \text{ with } r\in[1,3),\\	\|\v(\tau)\|^2_{\H}\  e^{-2\alpha(t-\tau)}  + C\int_{\tau}^{t} \bigg[\|\z(s)\|^{2}_{\H}+\|\z(s)\|^{4}_{\widetilde{\L}^{4}}+\|\z(s)\|^{r+1}_{\widetilde{\L}^{r+1}}  \\ \qquad\qquad+ \|\f\|^2_{\V'}\bigg] e^{-2\alpha(t-s)} \d s,\qquad\qquad\qquad\qquad \text{ for } d=2,3 \text{ with } r\geq3,
			\end{cases}
		\end{align}
		and 
		\begin{align}\label{Energy_esti2}
			\|\v(t)\|^2_{\H} =& \|\v(\tau)\|^2_{\H} e^{- 2\alpha(t-\tau)} + 2 \int_{\tau}^{t}\ e^{- 2\alpha(t-s)}\bigg[b(\v(s), \v(s), \z(s))-b(\z(s), \z(s), \v(s))\nonumber\\
			& - \beta\big\langle\mathcal{C}(\v(s) +\z(s)),\z(s)\big\rangle  +(\chi-\alpha)(\z(s),\v(s)) +\big\langle \f, \v(s)\big \rangle - \mu\|\v(s)\|^2_{\V}\nonumber\\&-\beta\|\v(s)+\z(s)\|^{r+1}_{\wi\L^{r+1}}\bigg] \d s,
		\end{align}
		where $\ t\in [a, \infty).$
	\end{lemma}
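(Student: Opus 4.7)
\textbf{Proof plan for Lemma \ref{RA1}.}

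The plan is to extract a differential identity from the energy equality in Definition \ref{defn5.9}(iii), then integrate against an appropriate factor. Differentiating the energy equality in time (justified a.e.\ since $\v\in \mathrm{C}([0,\infty);\H)\cap\mathrm{L}^2_{\mathrm{loc}}([0,\infty);\V)\cap\mathrm{L}^{r+1}_{\mathrm{loc}}([0,\infty);\wi\L^{r+1})$ with $\frac{\d\v}{\d t}\in\mathrm{L}^{(r+1)/r}_{\mathrm{loc}}([0,\infty);\V')$) and using the antisymmetry $b(\v+\z,\v+\z,\v)=-b(\v,\v,\z)+b(\z,\z,\v)$ from \eqref{b0} together with the algebraic identity $\langle\mathcal{C}(\v+\z),\v\rangle=\|\v+\z\|_{\wi\L^{r+1}}^{r+1}-\langle\mathcal{C}(\v+\z),\z\rangle$, I obtain the differential identity
\begin{align*}
  \frac{\d}{\d t}\|\v\|_\H^2 + 2\mu\|\v\|_\V^2 + 2\alpha\|\v\|_\H^2 + 2\beta\|\v+\z\|_{\wi\L^{r+1}}^{r+1}
  &= 2\bigl[b(\v,\v,\z)-b(\z,\z,\v)\bigr.\\
  &\quad\bigl.\mp \beta\langle\mathcal{C}(\v+\z),\z\rangle + (\chi-\alpha)(\z,\v)+\langle\f,\v\rangle\bigr].
\end{align*}

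For \eqref{Energy_esti2}, I multiply through by the integrating factor $e^{2\alpha t}$, so that the LHS collapses to $\frac{\d}{\d t}\bigl(e^{2\alpha t}\|\v\|_\H^2\bigr)+e^{2\alpha t}[2\mu\|\v\|_\V^2+2\beta\|\v+\z\|_{\wi\L^{r+1}}^{r+1}]$; integrating from $\tau$ to $t$ and dividing by $e^{2\alpha t}$ produces \eqref{Energy_esti2} directly. For \eqref{Energy_esti1}, I bound each RHS term using H\"older's and Young's inequalities. In the $d=2$, $r\in[1,3)$ branch I apply Ladyzhenskaya's inequality \eqref{lady} to write $|b(\v,\v,\z)|\leq 2^{1/4}\|\v\|_\H^{1/2}\|\v\|_\V^{3/2}\|\z\|_{\wi\L^4}$, then Young with exponents $4/3$ and $4$ yields $2|b(\v,\v,\z)|\leq \tfrac{\mu}{2}\|\v\|_\V^2+\tfrac{R}{2}\|\v\|_\H^2\|\z\|_{\wi\L^4}^4$ for the prescribed constant $R=\tfrac{729}{8\mu^3}$. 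The remaining terms $|b(\z,\z,\v)|$, $\beta|\langle\mathcal{C}(\v+\z),\z\rangle|$, $|(\chi-\alpha)(\z,\v)|$ and $|\langle\f,\v\rangle|$ are bounded by H\"older/Young so that their $\|\v\|_\V^2$ and $\|\v+\z\|_{\wi\L^{r+1}}^{r+1}$ contributions are absorbed into the corresponding LHS terms, leaving an additive $C\bigl(\|\z\|_\H^2+\|\z\|_{\wi\L^4}^4+\|\z\|_{\wi\L^{r+1}}^{r+1}+\|\f\|_{\V'}^2\bigr)$ remainder. Dropping the positive $2\mu\|\v\|_\V^2$ and $2\beta\|\v+\z\|_{\wi\L^{r+1}}^{r+1}$ terms gives the differential inequality
\[
  \frac{\d}{\d t}\|\v\|_\H^2+(2\alpha-R\|\z\|_{\wi\L^4}^4)\|\v\|_\H^2 \leq C\bigl(\|\z\|_\H^2+\|\z\|_{\wi\L^4}^4+\|\z\|_{\wi\L^{r+1}}^{r+1}+\|\f\|_{\V'}^2\bigr),
\]
whose resolution by the standard (time-dependent) Gronwall inequality is precisely the first branch of \eqref{Energy_esti1}. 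For $d=2,3$ with $r\geq 3$, interpolation (Lemma \ref{Interpolation}) gives $|b(\v,\v,\z)|\leq \|\v\|_{\wi\L^{r+1}}^{(r+1)/(r-1)}\|\v\|_\H^{(r-3)/(r-1)}\|\z\|_{\wi\L^{r+1}}$ (for $r=3$, Ladyzhenskaya in 3D together with $2\beta\mu\geq 1$ plays the analogous role), and Young's inequality now absorbs all $\|\v\|_\V^2$ and $\|\v+\z\|_{\wi\L^{r+1}}^{r+1}$ pieces into the LHS \emph{without} producing an $\|\v\|_\H^2$-coefficient; thus the analogous differential inequality loses the $R\|\z\|_{\wi\L^4}^4\|\v\|_\H^2$ term, and multiplying by $e^{2\alpha t}$ and integrating yields the second branch of \eqref{Energy_esti1}.

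The main obstacle is the bookkeeping in the $r\in[1,3)$ case: the Forchheimer term $\beta\|\v+\z\|_{\wi\L^{r+1}}^{r+1}$ has too low an exponent to dominate the cubic Ladyzhenskaya bound on $b(\v,\v,\z)$, so one is forced to pay an $\|\v\|_\H^2\|\z\|_{\wi\L^4}^4$-type contribution which then propagates into the exponential factor of \eqref{Energy_esti1}. In the $r\geq 3$ regime (especially the critical $d=r=3$ with $2\beta\mu\geq 1$), the cancellation between the viscous term and the Forchheimer term is tight, and constants must be tracked carefully so that the residual coefficient on $\|\v\|_\V^2$ remains nonnegative; this is also the place where the hypothesis $2\beta\mu\geq 1$ is essential. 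Routine Young-inequality estimates for $(\chi-\alpha)(\z,\v)$, $\langle\f,\v\rangle$, and the cubic Forchheimer cross term $\beta\langle\mathcal{C}(\v+\z),\z\rangle$ complete the argument.
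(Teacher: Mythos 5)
Your proposal follows essentially the same route as the paper: derive the pointwise energy identity from the equation, use antisymmetry of $b$ and the identity for $\langle\mathcal{C}(\v+\z),\v\rangle$, bound $b(\v,\v,\z)$ via Ladyzhenskaya for $d=2$, $r\in[1,3)$ (producing the $R\|\z\|_{\widetilde{\L}^4}^4\|\v\|_{\H}^2$ Gronwall weight) and via interpolation with absorption into $\frac{\beta}{4}\|\v+\z\|_{\widetilde{\L}^{r+1}}^{r+1}$ for $r\geq 3$, then conclude by Gronwall for \eqref{Energy_esti1} and by the integrating factor $e^{2\alpha t}$ for \eqref{Energy_esti2}. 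The only quibble is your side remark that $2\beta\mu\geq 1$ is "essential" here: the paper's proof of this lemma never invokes it, since the absorption into $\frac{\beta}{4}\|\v+\z\|_{\widetilde{\L}^{r+1}}^{r+1}$ and $\frac{\mu}{4}\|\v\|_{\V}^2$ works for any $\beta,\mu>0$.
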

	\begin{proof}
		From \eqref{cscbf}, we obtain 
		\begin{align}\label{Energy_esti3}
			\frac{1}{2}\frac{\d}{\d t} \|\v(t)\|^2_{\H} 
			&=  - \mu \|\v(t)\|^2_{\V}-\alpha\|\v(t)\|^2_{\H} -b(\v(t), \z(t), \v(t)) - \beta\big\langle\mathcal{C}(\v(t)+\z(t)), \v(t) \big\rangle \nonumber\\
			&\quad- b(\z(t),\z(t) ,\v(t)) +(\chi-\alpha)(\z(t),\v(t))+\big\langle \f, \v(t)\big \rangle \nonumber\\
			&	=  - \mu \|\v(t)\|^2_{\V}-\alpha\|\v(t)\|^2_{\H}  - \beta\|\v(t)+\z(t)\|^{r+1}_{\wi\L^{r+1}}+b(\v(t), \v(t), \z(t))\nonumber\\
			&\quad+\beta\big\langle\mathcal{C}(\v(t)+\z(t)), \z(t)\big\rangle  - b(\z(t),\z(t) ,\v(t)) +(\chi-\alpha)(\z(t),\v(t))\nonumber\\&\quad +\big\langle \f, \v(t)\big \rangle, 
		\end{align}
	for a.e. $t\in[0,T]$.	For $d=2$ with $r\in[1,3)$, by using H\"older's and Young's inequalities, and \eqref{lady},  we obtain
		\begin{align*}
			|b(\v, \v, \z)|  \leq &  \|\v\|_{\widetilde{\L}^{4}} \|\v\|_{\V} \|\z\|_{\widetilde{\L}^{4}}	\leq 2^{\frac{1}{4}}  \|\v\|^{\frac{1}{2}}_{\H} \|\v\|^{\frac{3}{2}}_{\V} \|\z\|_{\widetilde{\L}^{4}}\leq \frac{\mu}{4} \|\v\|^2_{\V} + \frac{R}{2} \|\v\|^2_{\H}\ \|\z\|^{4}_{\widetilde{\L}^{4}},
		\end{align*}
		where $R=\frac{729 }{8 \mu^3}$. For $d=2,3$ with $r\geq3$, by using H\"older's, interpolation and Young's inequalities, we obtain (taking without loss of generality that $r>3,$ but the final estimate holds for $r=3$ also)
		\begin{align*}
			|b(\v, \v, \z)| & \leq  \|\v\|_{\widetilde{\L}^{r+1}} \|\v\|_{\V} \|\z\|_{\widetilde{\L}^{\frac{2(r+1)}{r-1}}}\leq \|\v\|_{\widetilde{\L}^{r+1}} \|\v\|_{\V} \|\z\|^{\frac{2}{r-1}}_{\widetilde{\L}^{r+1}}\|\z\|^{\frac{r-3}{r-1}}_{\H}\nonumber\\
			&\leq\|\v+\z\|_{\widetilde{\L}^{r+1}} \|\v\|_{\V} \|\z\|^{\frac{2}{r-1}}_{\widetilde{\L}^{r+1}}\|\z\|^{\frac{r-3}{r-1}}_{\H}+ \|\v\|_{\V} \|\z\|^{\frac{r+1}{r-1}}_{\widetilde{\L}^{r+1}}\|\z\|^{\frac{r-3}{r-1}}_{\H}\nonumber\\
			&\leq\frac{\beta}{4}\|\v+\z\|^{r+1}_{\widetilde{\L}^{r+1}}+\frac{\mu}{4} \|\v\|^2_{\V}+ C\|\z\|^{r+1}_{\widetilde{\L}^{r+1}}+C\|\z\|^{2}_{\H}.
		\end{align*}
		For $r\in[1,\infty)$,	using Young's and H\"older's inequalities, we also obtain 
		\begin{align*}
			\beta\big\langle\mathcal{C}(\v+\z),\v+\z\big\rangle &=   \beta\|\v+\z\|^{r+1}_{\widetilde{\L}^{r+1}},\\
			|\beta\big\langle\mathcal{C}(\v+\z),\z\big\rangle|&\leq  \beta \|\v+\z\|^{r}_{\widetilde{\L}^{r+1}} \|\z\|_{\widetilde{\L}^{r+1}}\leq  \frac{\beta}{4} \|\v+\z\|^{r+1}_{\widetilde{\L}^{r+1}} + C\|\z\|^{r+1}_{\widetilde{\L}^{r+1}},\\
			|\big\langle(\chi-\alpha)\z- \B(\z)+\f, \v\big \rangle| &\leq \big(\left|\chi-\alpha\right|\|\z\|_{\V'} +\|\B(\z)\|_{\V'}+\|\f\|_{\V'}\big) \|\v\|_{\V}\nonumber\\&\leq  \frac{\mu}{4} \|\v\|^2_{\V} +C\|\z\|^2_{\H}+C\|\z\|^4_{\widetilde{\L}^{4}}+C\|\f\|^2_{\V'}.
		\end{align*}
		Hence, from \eqref{Energy_esti3}, we deduce that 
		\begin{align*}
			\frac{\d}{\d t} \|\v(t)\|^2_{\H}\leq  \begin{cases}
				\left[-2\alpha +R \|\z(t)\|^{4}_{\widetilde{\L}^{4}}\right]\|\v(t)\|^2_{\H}+C\left[\|\z(t)\|^{2}_{\H}+\|\z(t)\|^{4}_{\widetilde{\L}^{4}}+\|\z(t)\|^{r+1}_{\widetilde{\L}^{r+1}}  + \|\f\|^2_{\V'}\right],\\
				\qquad\qquad\qquad\qquad\qquad\qquad\qquad\qquad\qquad\qquad\qquad\text{ for } d=2 \text{ with } r\in[1,3),\\
				-2\alpha\|\v(t)\|^2_{\H}+C\left[\|\z(t)\|^{2}_{\H}+\|\z(t)\|^{4}_{\widetilde{\L}^{4}}+\|\z(t)\|^{r+1}_{\widetilde{\L}^{r+1}}  + \|\f\|^2_{\V'}\right], \\ \qquad\qquad\qquad\qquad\qquad\qquad\qquad\qquad\qquad\qquad\qquad\text{ for } d=2,3 \text{ with } r\geq3,
			\end{cases}
		\end{align*}
		and an application of  Gronwall's inequality yields \eqref{Energy_esti1}. Now, applying the variation of constants formula to \eqref{Energy_esti3}, we obtain \eqref{Energy_esti2} immediately.
	\end{proof}
	
	\begin{lemma}\label{weak_topo1}
		Let	$\v(t, \v_0)$ be the unique solution to the initial value problem \eqref{cscbf} with the initial condition  $\v_0 \in \H $, and with a deterministic function $\z \in \mathrm{L}^4_{\emph{loc}}(\R^+; \widetilde{\L}^4)\cap \mathrm{L}^2_{\emph{loc}} (\R^+; \H)$ (for $r\in[1,3)$) and $\z \in \mathrm{L}^{r+1}_{\emph{loc}}(\R^+; \widetilde{\L}^{r+1})\cap \mathrm{L}^2_{\emph{loc}} (\R^+; \H)$ (for $r\geq3$). For $T>0,$ if $\boldsymbol{y}_n$ converges to $\boldsymbol{y}$ in $\H$ weakly, then $\v(\cdot, \boldsymbol{y}_n)$ converges to $\v(\cdot, \boldsymbol{y})$ in $\mathrm{L}^2 (0, T; \V )\cap\mathrm{L}^{r+1}(0, T; \widetilde{\L}^{r+1})$ weakly.
	\end{lemma}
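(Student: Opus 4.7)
The plan is to derive a priori bounds uniform in $n$ on $\v_n(\cdot):=\v(\cdot,\y_n)$, extract a weakly convergent subsequence, and identify the limit as $\v(\cdot,\y)$ via the uniqueness provided by Theorem \ref{solution}. Since the weak limit is then independent of the subsequence, Urysohn's subsequence principle upgrades the conclusion to convergence of the full sequence.

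First I would observe that since $\y_n \rightharpoonup \y$ in $\H$, the Banach--Steinhaus theorem yields $\sup_n \|\y_n\|_{\H} < \infty$. Applying Lemma \ref{RA1} to $\v_n$ with initial time $\tau=0$ and using the local integrability of $\z$ and $\f\in\V'$, I obtain a constant $M_T>0$, independent of $n$, such that
\begin{align*}
\sup_{t\in[0,T]}\|\v_n(t)\|_{\H}^2 + \mu\int_0^T\|\v_n(s)\|_{\V}^2\d s+\beta\int_0^T\|\v_n(s)+\z(s)\|_{\widetilde{\L}^{r+1}}^{r+1}\d s\leq M_T,
\end{align*}
which in particular provides a uniform bound on $\{\v_n\}$ in $\mathrm{L}^\infty(0,T;\H)\cap\mathrm{L}^2(0,T;\V)\cap\mathrm{L}^{r+1}(0,T;\widetilde{\L}^{r+1})$. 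From the equation \eqref{cscbf} together with the estimates \eqref{2p9 1} (for $r\in[1,3]$) or \eqref{2.9a} (for $r>3$), I also bound $\{\frac{\d\v_n}{\d t}\}$ in $\mathrm{L}^{(r+1)/r}(0,T;\V'+\widetilde{\L}^{(r+1)/r})$ uniformly, exactly as in Step~I of the proof of Theorem \ref{solution}.

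By the Banach--Alaoglu theorem, there exist a subsequence (not relabeled) and $\v^\star\in\mathrm{L}^\infty(0,T;\H)\cap\mathrm{L}^2(0,T;\V)\cap\mathrm{L}^{r+1}(0,T;\widetilde{\L}^{r+1})$ with $\frac{\d\v^\star}{\d t}\in\mathrm{L}^{(r+1)/r}(0,T;\V'+\widetilde{\L}^{(r+1)/r})$ such that $\v_n\xrightharpoonup{w^\ast}\v^\star$ in $\mathrm{L}^\infty(0,T;\H)$ and $\v_n\xrightharpoonup{w}\v^\star$ in $\mathrm{L}^2(0,T;\V)\cap\mathrm{L}^{r+1}(0,T;\widetilde{\L}^{r+1})$. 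To identify the limit I need to pass to the limit in the weak formulation \eqref{W-CSCBF} tested against $\psi(t)\phi$ with $\phi\in\mathcal{V}$ compactly supported, $\psi\in\mathrm{C}^1_0([0,T))$. The Aubin--Lions lemma, applied on each ball $\mathcal{O}_R=\mathcal{O}\cap\{|x|<R\}$, yields the strong convergence $\v_n\to\v^\star$ in $\mathrm{L}^2(0,T;\L^2(\mathcal{O}_R))$, which combined with Corollaries \ref{convergence_b1}--\ref{convergence_b2} handles the nonlinear transport term, and with Lemma \ref{convergence_c2_1} (applied to $\mathcal{C}(\v_n+\z)$) handles the absorption term. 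The linear terms pass by weak convergence.

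The main obstacle is the identification of the initial condition $\v^\star(0)=\y$: the trace at $t=0$ must be recovered from a weak-type convergence. I would multiply the equation by a test function of the form $\psi(t)\phi$ with $\psi\in\mathrm{C}^1([0,T])$, $\psi(0)=1$, $\psi(T)=0$, and $\phi\in\mathcal{V}$, integrate by parts in time to put the time derivative on $\psi$, and use the weak convergence $\y_n\rightharpoonup\y$ in $\H$ together with the passage to the limit described above. This produces the identity $(\v^\star(0),\phi)=(\y,\phi)$ for every $\phi\in\mathcal{V}$, hence $\v^\star(0)=\y$. Thus $\v^\star$ is a weak solution of \eqref{cscbf} with initial datum $\y$, and the uniqueness asserted in Theorem \ref{solution} gives $\v^\star=\v(\cdot,\y)$. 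Since any weakly convergent subsequence of $\{\v_n\}$ shares the same limit, the full sequence converges weakly to $\v(\cdot,\y)$ in $\mathrm{L}^2(0,T;\V)\cap\mathrm{L}^{r+1}(0,T;\widetilde{\L}^{r+1})$, completing the proof.
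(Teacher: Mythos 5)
Your proposal is correct and follows essentially the same route as the paper's proof: uniform energy bounds, Banach--Alaoglu, local strong convergence via Aubin--Lions, passage to the limit in the nonlinear terms using the convergence lemmas for $b(\cdot,\cdot,\cdot)$ and $\mathcal{C}(\cdot)$, identification of the limit by uniqueness, and the subsequence principle to upgrade to the full sequence. You supply more detail than the paper on the recovery of the initial condition and on the origin of the a priori bounds, but the argument is the same in substance.
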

	\begin{proof}
		Assume that $\{\boldsymbol{y}_n\}_{n\in \N}$ is an $\H$-valued sequence such that $\boldsymbol{y}_n$ converges to $\boldsymbol{y}\in \H$ weakly. Let $\v_n(\cdot)= \v(\cdot, \boldsymbol{y}_n)$ and $\v(\cdot)=\v(\cdot, \boldsymbol{y})$. Since  $\{\boldsymbol{y}_n\}_n$ is a bounded sequence in $\H$,
		\begin{align}\label{Bounded}
			\text{ the sequence } \ \{\v_n\}_{n\in \N} \ \text{ is bounded in }\  \mathrm{L}^{\infty}(0, T; \H)\cap\mathrm{L}^{2}(0, T; \V)\cap\mathrm{L}^{r+1}(0, T; \widetilde{\L}^{r+1}).
		\end{align}
		Hence, there exists a subsequence $\{\v_{n'}\}_{n'\in \N}$ of $\{\v_n\}_{n\in \N}$ and  $$\widetilde{\v}\in \mathrm{L}^{\infty}(0, T; \H)\cap\mathrm{L}^{2}(0, T; \V)\cap\mathrm{L}^{r+1}(0, T; \widetilde{\L}^{r+1}),$$ such that, as $n' \to \infty,$ (by the Banach-Alaoglu theorem)
		\begin{align}\label{lim1}
			\begin{cases}
				\v_{n'} \xrightharpoonup{w^*} \widetilde{\v} \ \text{  in } \ \mathrm{L}^{\infty}(0, T; \H)\\ \v_{n'} \xrightharpoonup{w} \widetilde{\v} \ \text{  in }\ \mathrm{L}^{2}(0, T; \V)\cap\mathrm{L}^{r+1}(0, T; \widetilde{\L}^{r+1}).
			\end{cases}
		\end{align}
		Moreover, $\v_{n'} \to \widetilde{\v}$ strongly in $\mathrm{L}^2(0, T; {\L}^2_{\text{loc}}(\mathcal{O})).$ Using Corollaries \ref{convergence_b1}, \ref{convergence_b2} and \ref{convergence_c3_1}, we can conclude that $\widetilde{\v}$ is a solution of \eqref{cscbf} with $\widetilde{\v}(0)=\boldsymbol{y}$. Since \eqref{cscbf} has unique solution, we infer that $\widetilde{\v}=\v.$ By a contradiction argument, we infer that the whole sequence $\{\v_n\}_{n\in \N}$ converges to $\v$ in $\mathrm{L}^{2}(0, T; \V)\cap\mathrm{L}^{r+1}(0, T; \widetilde{\L}^{r+1})$ weakly.
	\end{proof}

	\begin{lemma}\label{weak_topo2}
		Let	$\v(t, \v_0)$ be the unique solution to the initial value problem \eqref{cscbf} with initial condition \ $\v_0 \in \H $, and with a deterministic function $\z \in \mathrm{L}^4_{\emph{loc}}(\R^+; \widetilde{\L}^4 (\mathcal{O}))\cap \mathrm{L}^2_{\emph{loc}} (\R^+; \H)$ (for $r\in[1,3)$) and $\z \in \mathrm{L}^{r+1}_{\emph{loc}}(\R^+; \widetilde{\L}^{r+1} (\mathcal{O}))\cap \mathrm{L}^2_{\emph{loc}} (\R^+; \H)$ (for $r\geq3$). For $T>0,$ if $\boldsymbol{y}_n$ converges to $\boldsymbol{y}$ in $\H$ weakly, then for any $\phi\in \H, (\v(\cdot, \boldsymbol{y}_n), \phi)$ converges uniformly to $(\v(\cdot, \boldsymbol{y}), \phi)$ on $[0, T]$, as $n \to \infty.$
	\end{lemma}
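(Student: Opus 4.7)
The plan is to combine the weak convergence established in Lemma \ref{weak_topo1} with an Arzel\`a--Ascoli argument applied to the scalar functions $t \mapsto (\v(t,\boldsymbol{y}_n),\phi)$. Write $\v_n(t)=\v(t,\boldsymbol{y}_n)$ and $\v(t)=\v(t,\boldsymbol{y})$. Since $\boldsymbol{y}_n\rightharpoonup \boldsymbol{y}$ in $\H$, the sequence $\{\boldsymbol{y}_n\}$ is bounded in $\H$, and the energy estimate \eqref{Energy_esti1} of Lemma \ref{RA1} (with $\tau=0$) yields a constant $M>0$ such that
\begin{align*}
\sup_{n\in\N}\|\v_n\|_{\mathrm{L}^\infty(0,T;\H)}+\sup_{n\in\N}\|\v_n\|_{\mathrm{L}^2(0,T;\V)}+\sup_{n\in\N}\|\v_n\|_{\mathrm{L}^{r+1}(0,T;\widetilde{\L}^{r+1})}\leq M.
\end{align*}
A standard density argument will let me reduce the problem to test functions $\phi$ in the dense subspace $\V\cap\widetilde{\L}^{r+1}\subset\H$: indeed, for general $\phi\in\H$ and $\phi_\epsilon\in\V\cap\widetilde{\L}^{r+1}$ with $\|\phi-\phi_\epsilon\|_{\H}$ small, the uniform bound on $\|\v_n(t)\|_{\H}$ and $\|\v(t)\|_{\H}$ gives $\sup_{t\in[0,T]}|(\v_n(t)-\v(t),\phi-\phi_\epsilon)|\leq 2M\|\phi-\phi_\epsilon\|_{\H}$, so uniform convergence for $\phi_\epsilon$ transfers to $\phi$.

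For $\phi\in\V\cap\widetilde{\L}^{r+1}$ and $0\leq s<t\leq T$, I would test the weak form \eqref{W-CSCBF} against $\phi$ to get
\begin{align*}
(\v_n(t)-\v_n(s),\phi)=\int_s^t\Big\langle -\mu\A\v_n(\tau)-\B(\v_n(\tau)+\z(\tau))-\alpha\v_n(\tau)-\beta\mathcal{C}(\v_n(\tau)+\z(\tau))+(\chi-\alpha)\z(\tau)+\f,\phi\Big\rangle\d\tau
\end{align*}
and bound each term using the operator estimates \eqref{2p9 1}, \eqref{2.9a}, and the identity $\langle\mathcal{C}(\boldsymbol{u}),\phi\rangle\leq\|\boldsymbol{u}\|_{\widetilde{\L}^{r+1}}^r\|\phi\|_{\widetilde{\L}^{r+1}}$, together with H\"older's inequality in $\tau$. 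The uniform bounds on $\v_n$ (and hypothesis on $\z$) then yield an estimate of the form
\begin{align*}
|(\v_n(t)-\v_n(s),\phi)|\leq C(\phi,M,\z,\f)\,|t-s|^{\sigma}
\end{align*}
for some $\sigma\in(0,1)$ independent of $n$. Thus the real-valued functions $F_n(t):=(\v_n(t),\phi)$ are uniformly bounded (by $M\|\phi\|_{\H}$) and equicontinuous on $[0,T]$.

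By the Arzel\`a--Ascoli theorem, $\{F_n\}$ is relatively compact in $\mathrm{C}([0,T])$. To identify the limit, take any subsequence $F_{n_k}$ converging uniformly to some $F\in\mathrm{C}([0,T])$. The weak convergence $\v_n\rightharpoonup\v$ in $\mathrm{L}^2(0,T;\V)$ from Lemma \ref{weak_topo1} gives, for every $\psi\in\mathrm{L}^2(0,T)$,
\begin{align*}
\int_0^T\psi(\tau)F_{n_k}(\tau)\d\tau=\int_0^T\psi(\tau)(\v_{n_k}(\tau),\phi)\d\tau\longrightarrow\int_0^T\psi(\tau)(\v(\tau),\phi)\d\tau,
\end{align*}
so $F=(\v(\cdot),\phi)$ almost everywhere, and by continuity everywhere on $[0,T]$. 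Since every subsequence of $\{F_n\}$ admits a further subsequence converging uniformly to the same limit $(\v(\cdot),\phi)$, the full sequence converges uniformly. Combined with the density step, this completes the proof. The main technical task is producing the uniform H\"older-in-time bound for $F_n$; the nonlinear $\B$ and $\mathcal{C}$ contributions are the only non-routine pieces, but both are handled by the operator estimates already established in Section \ref{sec2}.
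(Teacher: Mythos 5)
Your proposal is correct and follows essentially the same route as the paper's proof: uniform bounds on $\v_n$, equicontinuity of $t\mapsto(\v_n(t),\phi)$ obtained from the uniform $\mathrm{L}^{\frac{r+1}{r}}(0,T;\V'+\widetilde{\L}^{\frac{r+1}{r}})$ bound on $\frac{\d\v_n}{\d t}$ (which you re-derive by testing the equation, whereas the paper cites it from Theorem \ref{solution}), Arzel\`a--Ascoli plus a subsequence--uniqueness argument with the limit identified through Lemma \ref{weak_topo1}, and a final density step from $\V\cap\widetilde{\L}^{r+1}$ (the paper uses $\mathcal{V}$) to general $\phi\in\H$. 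The only cosmetic difference is that you identify the limit by integrating against $\psi\in\mathrm{L}^2(0,T)$ rather than via a.e.\ pointwise convergence, which is equally valid.
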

	\begin{proof}
		Assume that $\{\boldsymbol{y}_n\}_{n\in \N}$ is an $\H$-valued sequence such that $\boldsymbol{y}_n$ converges to $\boldsymbol{y}\in \H$ weakly. Let $\v_n(t)= \v(t, \boldsymbol{y}_n)$ and $\v(t)=\v(t, \boldsymbol{y})$. From the  proof of Lemma \ref{weak_topo1}, we infer that  \eqref{Bounded} and \eqref{lim1} hold true. Take any function $\phi \in \mathcal{V}.$ Then, by \eqref{lim1}, for a.e. $t\in[0, T], (\v_n(t), \phi)$ converges to $(\v(t), \phi).$ Furthermore, since $\{\v_n\}_{n\in\N}$ is a bounded sequence in $\mathrm{L}^{\infty}(0,T; \H), \{(\v_n(\cdot), \phi)\}_n$ is uniformly bounded on $[0, T].$

		Also, from Theorem \ref{solution}, we have $\big\|\frac{\d\v_{n}}{\d t}\big\|_{\mathrm{L}^{\frac{r+1}{r}}(0, T; \V'+\widetilde{\L}^{\frac{r+1}{r}})}\leq C,$ for some $C>0$ and all $n\in \N.$ Hence by the Cauchy-Schwartz inequality, for all $0\leq t \leq t+a \leq T$ and $n\in \N,$ we obtain
		\begin{align*}
			|(\v_n(t+a)-\v_n(t), \phi)|\leq \int_{t}^{t+a}\bigg|\bigg\langle\frac{\d\v_{n}(s)}{\d t} , \phi \bigg\rangle\bigg|\d s\leq C \|\phi\|_{\V\cap\widetilde{\L}^{r+1}} a^{\frac{1}{r+1}}.
		\end{align*}
		This shows that the sequence $\{(\v_n(\cdot), \phi)\}_{n\in \N}$ is uniformly equicontinuous on $[0, T].$ Hence, by the Arzela-Ascoli theorem, there exists a subsequence $\{(\v_{n'}(\cdot), \phi)\}_{n'\in \N}$ of $\{(\v_n(\cdot), \phi)\}_{n\in \N}$, such that $(\v_{n'}(\cdot), \phi)$ converges to $(\v(\cdot), \phi)$ uniformly on $[0, T].$ Again, using the standard contradiction argument, we assert that 
		\begin{align*}
			(\v_{n}(\cdot), \phi) \to (\v(\cdot), \phi) \text{ uniformly on  } [0, T].
		\end{align*}
		Using the density of $\mathcal{V}$ in $\H$ and $\sup\limits_{t  \in [0, T]} \|\v_n(t)\|_{\H}< \infty$,  for any $\phi\in \H,$
		\begin{align*}
			(\v_{n}(t), \phi) \to (\v(\cdot), \phi) \text{ uniformly on  } [0, T],
		\end{align*}
		which completes the proof.
	\end{proof}
	\begin{lemma}\label{Bddns4}
		1. 	For $r\in[1,3)$ and each $\omega\in \Omega_1,$ we have 
		\begin{align*}
			\limsup_{t\to - \infty} \|\z(\omega)(t)\|^2_{\H}\  e^{2\alpha t +R\int_{t}^{0}\|\z(\zeta)\|^{4}_{\widetilde{\L}^{4}}\d\zeta} = 0.
		\end{align*}	
		2. 	For $r\geq3$ and each $\omega\in \Omega_2,$ we obtain 
		\begin{align*}
			\limsup_{t\to - \infty} \|\z(\omega)(t)\|^2_{\H}\  e^{2\alpha t} = 0.
		\end{align*}
	\end{lemma}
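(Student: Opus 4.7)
The plan is to combine two ingredients: the sub-linear pathwise growth of the Ornstein-Uhlenbeck process supplied by Proposition \ref{Ap}, and the ergodic-type integral bound supplied by Corollary \ref{Bddns1_1}. Each assertion will then follow from comparing a polynomial in $|t|$ against an exponential factor that decays as $t\to -\infty$.

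For the easier assertion (part 2), I would first apply the estimate \eqref{X_bound_of_z} to the input $(\mu\A+\chi I)^{-\delta}\omega$, obtaining a constant $C(\omega)<\infty$ such that $\|\z(\omega)(t)\|_{\mathrm{X}_2}\leq C(\omega)(1+|t|^{1/2})$ for all $t\in \R$. Since $\mathrm{X}_2\hookrightarrow \H$ continuously, this yields $\|\z(\omega)(t)\|^2_\H\leq C(\omega)(1+|t|)$. Multiplying by $e^{2\alpha t}$, the polynomial growth is dominated by the exponential decay as $t\to -\infty$ because $\alpha>0$, so the $\limsup$ vanishes.

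For part 1, the only additional difficulty is the extra integral term in the exponent, which must be absorbed. By Corollary \ref{Bddns1_1}, for every $\omega\in\Omega_1$ there exists $s_0=s_0(\omega)\geq 0$ such that $R\int_{-s}^0 \|\z(\zeta)\|^4_{\widetilde{\L}^4}\d\zeta \leq \alpha s$ for all $s\geq s_0$. Writing $t=-s$, this is equivalent to $R\int_t^0\|\z(\zeta)\|^4_{\widetilde{\L}^4}\d\zeta \leq -\alpha t$ whenever $t\leq -s_0$, so the full exponent is bounded by $\alpha t$ for such $t$. Coupling this with the pathwise polynomial bound on $\|\z(\omega)(t)\|^2_\H$ exactly as in part 2 delivers $\|\z(\omega)(t)\|^2_\H\, e^{2\alpha t+R\int_t^0\|\z(\zeta)\|^4_{\widetilde{\L}^4}\d\zeta}\leq C(\omega)(1+|t|)e^{\alpha t}\to 0$ as $t\to -\infty$.

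The substantive content lies entirely in the calibration already made in \eqref{Bddns_1}, where $\chi_0$ was chosen so that $R\,\mathbb{E}[\|\z_\chi(0)\|^4_{\mathrm{X}_1}]\leq \alpha$; this is precisely what is needed, via the strong law of large numbers, to produce Corollary \ref{Bddns1_1}. Once this calibration is in place, the only step requiring genuine thought is observing that ``half'' of the available exponential decay $e^{2\alpha t}$ survives after absorbing the integral term, which is what the factor $2$ in $2\alpha t$ was reserved for; everything else is routine bookkeeping.
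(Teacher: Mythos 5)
Your proposal is correct and follows essentially the same route as the paper: the pathwise sublinear growth bound \eqref{X_bound_of_z} gives a polynomial bound on $\|\z(\omega)(t)\|^2_{\H}$, and Corollary \ref{Bddns1_1} absorbs the integral in the exponent into $-\alpha t$, leaving $e^{\alpha t}$ to dominate the polynomial as $t\to-\infty$. The only cosmetic difference is that the paper bounds $\|\z(t)\|_{\H}\leq\rho_1|t|$ for $t\leq t_0$ rather than by $C(\omega)(1+|t|^{1/2})^2$, which changes nothing.
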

	\begin{proof}
			Let us fix $\omega\in \Omega_i$, for $i=1,2$. Then by Corollary \ref{Bddns1_1}, there exists a $t_0\leq 0$ such that for $t\leq t_0,$
		\begin{align}\label{Bddns3}
			R \int_{t}^{0} \|\z(s)\|^{4}_{\widetilde{\L}^4} \d s \leq - \alpha t, \ \ \ t\leq t_0.
		\end{align}
		Because of \eqref{X_bound_of_z}, there exists a $\rho_1= \rho_1(\omega)\geq 0$ such that,
		\begin{align}\label{rho}
			\frac{\|\z(t)\|_{\H}}{|t|} \leq \rho_1 , \  \frac{\|\z(t)\|_{\widetilde{\L}^4}}{|t|} \leq \rho_1, \ \text{ and } \  \frac{\|\z(t)\|_{\widetilde{\L}^{r+1}}}{|t|} \leq \rho_1\ \text{ for }\  t\leq t_0.
		\end{align}
		Therefore, we have, for every $\omega\in \Omega_1,$
		\begin{align*}
			\limsup_{t\to - \infty} \|\z(\omega)(t)\|^2_{\H}\  e^{2\alpha t +R \int_{t}^{0}\|\z(\zeta)\|^{4}_{\widetilde{\L}^{4}}\d\zeta}\leq&  \rho_1^2 \limsup_{t\to - \infty}  |t|^2 e^{\alpha t }=0,
		\end{align*}
		and for every $\omega\in \Omega_2,$
		\begin{align*}
			\limsup_{t\to - \infty} \|\z(\omega)(t)\|^2_{\H}\  e^{2\alpha t}\leq&  \rho_1^2 \limsup_{t\to - \infty}  |t|^2 e^{2\alpha t} =0,
		\end{align*}
		which completes the proof.
	\end{proof}
	\begin{lemma}\label{Bddns5}
		1. 	For $r\in[1,3)$ and each $\omega\in \Omega_1,$ we have 
		\begin{align*}
			\int_{- \infty}^{0} \bigg\{ 1 + \|\z(t)\|^2_{\H} + \|\z(t)\|^4_{\widetilde{\L}^4}+ \|\z(t)\|^{r+1}_{\widetilde{\L}^{r+1}}   \bigg\}e^{2\alpha t +R \int_{t}^{0}\|\z(\zeta)\|^{4}_{\widetilde{\L}^{4}}\d\zeta} \d t < \infty.
		\end{align*}	
		2. For $r\geq3$ and each $\omega\in \Omega_2,$ we get 
		\begin{align*}
			\int_{- \infty}^{0} \bigg\{ 1 + \|\z(t)\|^2_{\H} + \|\z(t)\|^4_{\widetilde{\L}^4}+ \|\z(t)\|^{r+1}_{\widetilde{\L}^{r+1}}   \bigg\}e^{2\alpha t} \d t < \infty.
		\end{align*}
	\end{lemma}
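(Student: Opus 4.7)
The plan is to split both integrals over the interval $(-\infty, t_0]$ (for the $t_0$ supplied by Corollary \ref{Bddns1_1} and hence also the sup-bound \eqref{rho}) and the compact interval $[t_0, 0]$, on which the integrand is continuous and therefore trivially integrable. All the work happens on the tail $(-\infty, t_0]$, where we can use the growth estimates we have already collected on $\z(\cdot)$.

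For part 1, fix $\omega \in \Omega_1$. On $(-\infty, t_0]$, Corollary \ref{Bddns1_1} applied on each subinterval $[t,0]$ gives
\begin{align*}
R\int_{t}^{0}\|\z(\zeta)\|^{4}_{\widetilde{\L}^{4}}\,\mathrm{d}\zeta \leq -\alpha t,
\end{align*}
so the exponential weight satisfies $e^{2\alpha t + R\int_t^0 \|\z\|^4_{\widetilde{\L}^4}\,\mathrm{d}\zeta} \leq e^{\alpha t}$. Next, the polynomial growth bounds \eqref{rho} on $\|\z(t)\|_{\H}$, $\|\z(t)\|_{\widetilde{\L}^4}$ and $\|\z(t)\|_{\widetilde{\L}^{r+1}}$ control the bracket by $C(1+|t|^2+|t|^4+|t|^{r+1})$ for some $\omega$-dependent constant. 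Hence the tail integrand is dominated by $C(1+|t|^{r+1})e^{\alpha t}$, which is integrable on $(-\infty, t_0]$ since $\alpha > 0$. The piece over $[t_0,0]$ is finite by continuity of $\z(\cdot)$ into the relevant spaces.

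Part 2 is easier: for $\omega \in \Omega_2$ the weight is already $e^{2\alpha t}$, so we do not need Corollary \ref{Bddns1_1} at all. The same polynomial bounds \eqref{rho} (which were derived from the general estimate \eqref{X_bound_of_z} of Proposition \ref{Ap} and hence apply on $\Omega_2$ as well) give that the tail integrand is bounded by $C(1+|t|^{r+1})e^{2\alpha t}$ on $(-\infty, t_0]$, which is integrable, and the remaining piece is handled by continuity.

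There is no real obstacle here; the lemma is essentially a bookkeeping consequence of the decay of the exponential weight against the at-most-polynomial growth of $\|\z(t)\|_{\H}$, $\|\z(t)\|_{\widetilde{\L}^4}$, $\|\z(t)\|_{\widetilde{\L}^{r+1}}$ ensured by \eqref{X_bound_of_z}--\eqref{rho} and, in the case $r\in[1,3)$, by the ergodic bound of Corollary \ref{Bddns1_1}. The only point one has to be slightly careful about is that \eqref{rho} is stated only on the tail $t\leq t_0$, which is why we cut the integral at $t_0$ and handle $[t_0,0]$ separately.
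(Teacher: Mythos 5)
Your proof is correct and follows essentially the same route as the paper: split the integral at the $t_0$ from Corollary \ref{Bddns1_1}, absorb the exponential of $R\int_t^0\|\z(\zeta)\|^4_{\widetilde{\L}^4}\d\zeta$ into $e^{\alpha t}$ via \eqref{Bddns3}, and dominate the bracket by a polynomial in $|t|$ using \eqref{rho}, with the $[t_0,0]$ piece handled trivially. (One trivial bookkeeping point: for $r\in[1,3)$ your final dominating function should keep the $|t|^4$ term, since $\|\z(t)\|^4_{\widetilde{\L}^4}\leq\rho_1^4|t|^4$ is not controlled by $C(1+|t|^{r+1})$ when $r+1<4$; this does not affect integrability.)
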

	\begin{proof}
		We first consider the case $r\in[1,3)$. Note that for $t_0\leq 0$,
		\begin{align*}
			\int_{t_0}^{0} \bigg\{ 1 + \|\z(t)\|^2_{\H} + \|\z(t)\|^4_{\widetilde{\L}^4}+ \|\z(t)\|^{r+1}_{\widetilde{\L}^{r+1}} \bigg\}e^{2\alpha t +R \int_{t}^{0}\|\z(\zeta)\|^{4}_{\widetilde{\L}^{4}}\d\zeta} \d t < \infty.
		\end{align*}
		Therefore, we only need to show that the integral 
		\begin{align*}
			\int_{- \infty}^{t_0} \bigg\{ 1+ \|\z(t)\|^2_{\H} + \|\z(t)\|^4_{\widetilde{\L}^4}  + \|\z(t)\|^{r+1}_{\widetilde{\L}^{r+1}} \bigg\}e^{2\alpha t +R \int_{t}^{0}\|\z(\zeta)\|^{4}_{\widetilde{\L}^{4}}\d\zeta} \d t < \infty.
		\end{align*}
		Using the estimate \eqref{Bddns3}, we find 
		\begin{align*}
			\int_{- \infty}^{t_0} e^{2\alpha t +R \int_{t}^{0}\|\z(\zeta)\|^{4}_{\widetilde{\L}^{4}}\d\zeta} \d t\leq \int_{- \infty}^{t_0} e^{\alpha t} \d t< \infty.
		\end{align*}
		Making use of \eqref{Bddns3} and \eqref{rho}, we obtain 
		\begin{align*} 
			&	\int_{- \infty}^{t_0} \bigg\{ \|\z(t)\|^2_{\H} + \|\z(t)\|^4_{\widetilde{\L}^4}+\|\z(t)\|^{r+1}_{\widetilde{\L}^{r+1}}  \bigg\}e^{2\alpha t +R \int_{t}^{0}\|\z(\zeta)\|^{4}_{\widetilde{\L}^{4}}\d\zeta} \d t\\& \leq \int_{- \infty}^{t_0} \big\{ \rho^2_1|t|^2+ \rho^4_1|t|^4+\rho^{r+1}_1|t|^{r+1}  \big\}e^{\alpha t } \d t< \infty,
		\end{align*}
		which  completes the proof for $r\in[1,3)$. For the case $r\geq3$, using \eqref{rho}, the proof is immediate by applying similar arguments as in the previous case.
	\end{proof}
	
	\begin{definition}\label{RA2}
		A function $\kappa: \Omega_1\to (0, \infty)$ belongs to class $\mathfrak{K}_1$ if and only if 
		\begin{align}
			\limsup_{t\to \infty} [\kappa(\theta_{-t}\omega)]^2 e^{-2\alpha t +R \int_{-t}^{0}\|\z(\omega)(s)\|^{4}_{\widetilde{\L}^{4}}\d s} = 0, 
		\end{align}
		where $R=\frac{729 }{8 \mu^3}$ and $\alpha>0$ is Darcy's constant.
		
		A function $\widetilde{\kappa}: \Omega_2\to (0, \infty)$ belongs to class ${\mathfrak{K}}_2$ if and only if 
		\begin{align}
			\limsup_{t\to \infty} [\widetilde{\kappa}(\theta_{-t}\omega)]^2 e^{-2\alpha t} = 0, 
		\end{align}
		where $\alpha>0$ is Darcy's constant.
	\end{definition}
	Let us denote the class of all closed and bounded random sets $D_1$ on $\H$ by $\mathfrak{DK}_1,$ such that the radius function $\Omega_1\ni \omega \mapsto \kappa(D_1(\omega)):= \sup\{\|x\|_{\H}:x\in D_1(\omega)\}$ belongs to class $\mathfrak{K}_1.$ It is straight forward by Corollary \ref{Bddns1_1} that the constant functions belongs to $\mathfrak{K}_1$. 
	It is clear by the Definition \ref{RA2} that the class $\mathfrak{K}_1$ is closed with respect to sum, multiplication by a constant and if $\kappa \in \mathfrak{K}_1, 0\leq \bar{\kappa} \leq \kappa,$ then $\bar{\kappa}\in \mathfrak{K}_1.$ A similar definition of class $\mathfrak{DK}_2$ can be derived in the case of the class $\mathfrak{K}_2$. 
	
	\begin{proposition}\label{radius}
		For $r\in[1,3)$, we define functions $\kappa_{i}:\Omega_1\to (0, \infty), i= 1, 2, 3, 4, 5, 6,$ by the following formulae, for $\omega\in\Omega_1,$
		\begin{align*}
			[\kappa_1(\omega)]^2 &:= \|\z(\omega)(0)\|_{\H},\ \ \
			[\kappa_2(\omega)]^2 := \sup_{s\leq 0} \|\z(\omega)(s)\|^2_{\H}\  e^{2\alpha s +R \int_{s}^{0}\|\z(\omega)(\zeta)\|^{4}_{\widetilde{\L}^{4}}\d\zeta}, \\
			[\kappa_3(\omega)]^2 &:= \int_{- \infty}^{0} \|\z(\omega)(t)\|^{r+1}_{\widetilde{\L}^{r+1}}\ e^{2\alpha t +R \int_{t}^{0}\|\z(\omega)(\zeta)\|^{4}_{\widetilde{\L}^{4}}\d\zeta} \d t, \\
			[\kappa_4(\omega)]^2 &:= \int_{- \infty}^{0} \|\z(\omega)(t)\|^2_{\H}\ e^{2\alpha t +R \int_{t}^{0}\|\z(\omega)(\zeta)\|^{4}_{\widetilde{\L}^{4}}\d\zeta} \d t,\\
			[\kappa_5(\omega)]^2 &:= \int_{- \infty}^{0} \|\z(\omega)(t)\|^4_{\widetilde{\L}^4}\ e^{2\alpha t +R \int_{t}^{0}\|\z(\omega)(\zeta)\|^{4}_{\widetilde{\L}^{4}}\d\zeta} \d t,\ \ \\
			[\kappa_6(\omega)]^2 &:= \int_{- \infty}^{0} e^{2\alpha t +R \int_{t}^{0}\|\z(\omega)(\zeta)\|^{4}_{\widetilde{\L}^{4}}\d\zeta} \d t.
		\end{align*}
		Then all these functions belongs to class $\mathfrak{K}_1.$
		\begin{proof}Let us recall from \eqref{stationary} that $\z(\theta_{-t}\omega)(s) = \z(\omega)(s-t)$.
			We consider
			\begin{align*}
				\limsup_{t\to  \infty}[\kappa_1(\theta_{-t}\omega)]^2 e^{-2\alpha t +R \int_{-t}^{0}\|\z(\omega)(s)\|^{4}_{\widetilde{\L}^{4}}\d s} =& \limsup_{t\to \infty}\|\z(\theta_{-t}\omega)(0)\|^2_{\H} e^{-2\alpha t +R \int_{-t}^{0}\|\z(\omega)(s)\|^{4}_{\widetilde{\L}^{4}}\d s}\\
				=&\limsup_{t\to \infty}\|\z(\omega)(-t)\|^2_{\H} e^{-2\alpha t +R \int_{-t}^{0}\|\z(\omega)(s)\|^{4}_{\widetilde{\L}^{4}}\d s}.
			\end{align*}
			Using Lemma \ref{Bddns4}, we have, $\kappa_1 \in \mathfrak{K}_1.$ It can be easily seen that 
			\begin{align*}
				[\kappa_2(\theta_{-t}\omega)]^2 
				= & \sup_{s\leq 0}  \|\z(\omega)(s-t)\|^2_{\H}\  e^{2\alpha s +R \int_{s}^{0}\|\z(\omega)(\zeta -t)\|^{4}_{\widetilde{\L}^{4}}\d\zeta}\\
				= & \sup_{s\leq 0}  \|\z(\omega)(s-t)\|^2_{\H}\  e^{2\alpha (s-t) +R \int_{s-t}^{-t}\|\z(\omega)(\zeta)\|^{4}_{\widetilde{\L}^{4}}\d\zeta}\ e^{2\alpha t}\\
				= & \sup_{\sigma\leq -t}  \|\z(\omega)(\sigma)\|^2_{\H}\  e^{2\alpha \sigma +R \int_{\sigma}^{-t}\|\z(\omega)(\zeta)\|^{4}_{\widetilde{\L}^{4}}\d\zeta}\ e^{2\alpha t}
			\end{align*}
			and 
			\begin{align*}
				&	\limsup_{t\to \infty} [\kappa_2(\theta_{-t}\omega)]^2 e^{-2\alpha t +R \int_{-t}^{0}\|\z(\omega)(s)\|^{4}_{\widetilde{\L}^{4}}\d s} \nonumber\\&=\limsup_{t\to \infty} \sup_{\sigma\leq -t}  \|\z(\omega)(\sigma)\|^2_{\H}\  e^{2\alpha \sigma +R \int_{\sigma}^{0}\|\z(\omega)(\zeta)\|^{4}_{\widetilde{\L}^{4}}\d\zeta}\\
				&=\limsup_{\sigma\to -\infty} \|\z(\omega)(\sigma)\|^2_{\H}\  e^{2\alpha \sigma +R \int_{\sigma}^{0}\|\z(\omega)(\zeta)\|^{4}_{\widetilde{\L}^{4}}\d\zeta}
				= 0,
			\end{align*}
			where we have used Lemma \ref{Bddns4}. This implies that $\kappa_2\in \mathfrak{K}_1.$ From the previous part of the proof, we obtain 
			\begin{align*}
				&\bigg\{[\kappa_3(\theta_{-t}\omega)]^2+ [\kappa_4(\theta_{-t}\omega)]^2+ [\kappa_5(\theta_{-t}\omega)]^2 +[\kappa_6(\theta_{-t}\omega)]^2\bigg\} e^{-2\alpha t +R \int_{-t}^{0}\|\z(\omega)(s)\|^{4}_{\widetilde{\L}^{4}}\d s}\\
				&=\int_{- \infty}^{-t} \bigg\{  \|\z(\omega)(t)\|^{r+1}_{\widetilde{\L}^{r+1}} + \|\z(\omega)(t)\|^2_{\H} + \|\z(\omega)(t)\|^4_{\widetilde{\L}^4} + 1 \bigg\}e^{2\alpha \sigma +R \int_{\sigma}^{0}\|\z(\omega)(\zeta)\|^{4}_{\widetilde{\L}^{4}}\d\zeta} \d\sigma.
			\end{align*}
			Invoking Lemma \ref{Bddns5}, we find 
			\begin{align*}
				\int_{- \infty}^{0} \bigg\{ \|\z(\omega)(t)\|^{r+1}_{\widetilde{\L}^{r+1}} + \|\z(\omega)(t)\|^2_{\H} + \|\z(\omega)(t)\|^4_{\widetilde{\L}^4} +1  \bigg\}e^{2\alpha t +R \int_{t}^{0}\|\z(\omega)(\zeta)\|^{4}_{\widetilde{\L}^{4}}\d\zeta} \d t < \infty.
			\end{align*}
			By an application of the Lebesgue monotone theorem, we conclude that as $t\to \infty$
			\begin{align*}
				\int_{- \infty}^{-t} \bigg\{  \|\z(\omega)(t)\|^{r+1}_{\widetilde{\L}^{r+1}} + \|\z(\omega)(t)\|^2_{\H} + \|\z(\omega)(t)\|^4_{\widetilde{\L}^4} + 1 \bigg\}e^{2\alpha \sigma +R \int_{\sigma}^{0}\|\z(\omega)(\zeta)\|^{4}_{\widetilde{\L}^{4}}\d\zeta} \d\sigma \to 0.
			\end{align*}
			This implies that $\kappa_3, \kappa_4, \kappa_5, \kappa_6\in \mathfrak{K}_1$, which completes the proof.
		\end{proof}
	\end{proposition}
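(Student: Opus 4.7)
The plan is to verify the defining condition of the class $\mathfrak{K}_1$ for each $\kappa_i$ by exploiting the stationarity identity $\z(\theta_{-t}\omega)(s) = \z(\omega)(s-t)$ from \eqref{stationary}, performing a change of variables that absorbs the temporal shift, and then invoking Lemmas \ref{Bddns4} and \ref{Bddns5} to extract the decay as $t\to\infty$.

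First I would handle $\kappa_1$, which is the simplest: the stationarity identity immediately gives $[\kappa_1(\theta_{-t}\omega)]^2 = \|\z(\omega)(-t)\|_{\H}$, so multiplying by the exponential weight $e^{-2\alpha t + R\int_{-t}^0 \|\z(\omega)(s)\|^4_{\widetilde{\L}^4}\d s}$ produces exactly the quantity whose $\limsup$ as $t\to-\infty$ is shown to vanish in part 1 of Lemma \ref{Bddns4}. Next, for $\kappa_2$, I would rewrite the supremum over $s\le 0$ using $\z(\theta_{-t}\omega)(\zeta)=\z(\omega)(\zeta-t)$, then substitute $\sigma = s-t$ to convert the $s$-integral $\int_s^0 \|\z(\theta_{-t}\omega)(\zeta)\|^4_{\widetilde{\L}^4}\d\zeta$ into $\int_{\sigma}^{-t}\|\z(\omega)(\zeta)\|^4_{\widetilde{\L}^4}\d\zeta$. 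Combining with the exponential prefactor $e^{-2\alpha t}$ consolidates the exponents so that $[\kappa_2(\theta_{-t}\omega)]^2 e^{-2\alpha t + R\int_{-t}^0\|\z\|^4}$ equals $\sup_{\sigma\le -t}\|\z(\omega)(\sigma)\|^2_{\H}\, e^{2\alpha\sigma + R\int_\sigma^0\|\z(\omega)(\zeta)\|^4_{\widetilde{\L}^4}\d\zeta}$, which tends to $0$ by Lemma \ref{Bddns4}.

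For the integral-type radii $\kappa_3,\kappa_4,\kappa_5,\kappa_6$, the same stationarity-and-shift manoeuvre would convert each integral $\int_{-\infty}^0 F(\z(\theta_{-t}\omega)(s))\,e^{2\alpha s + R\int_s^0\|\z(\theta_{-t}\omega)(\zeta)\|^4\d\zeta}\d s$, after multiplication by $e^{-2\alpha t + R\int_{-t}^0\|\z(\omega)\|^4}$, into $\int_{-\infty}^{-t} F(\z(\omega)(\sigma))\,e^{2\alpha\sigma + R\int_\sigma^0\|\z(\omega)(\zeta)\|^4_{\widetilde{\L}^4}\d\zeta}\d\sigma$. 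Since Lemma \ref{Bddns5} guarantees absolute integrability of the integrand on $(-\infty,0]$ (for $F\equiv 1$, $\|\cdot\|^2_{\H}$, $\|\cdot\|^4_{\widetilde{\L}^4}$, and $\|\cdot\|^{r+1}_{\widetilde{\L}^{r+1}}$), the dominated (or monotone) convergence theorem applied to the tail implies this quantity vanishes as $t\to\infty$.

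The main obstacle is mostly bookkeeping: aligning the exponential weight factor inherent in the definition of $\mathfrak{K}_1$ with the shifted integrals produced by stationarity, so that the surplus factor $e^{-2\alpha t}$ is correctly cancelled by the shift in the integration limits. Once the substitution $\sigma = s - t$ is carried out carefully and the resulting integral bounds $(-\infty, -t]$ or the supremum $\sup_{\sigma\le -t}$ are recognized, the conclusions follow directly from Lemmas \ref{Bddns4} and \ref{Bddns5}. No further analytic input is needed beyond these two lemmas.
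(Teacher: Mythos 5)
Your proposal follows essentially the same route as the paper: apply the stationarity identity $\z(\theta_{-t}\omega)(s)=\z(\omega)(s-t)$, perform the substitution $\sigma=s-t$ to absorb the factor $e^{-2\alpha t}$ and shift the integration limits (or supremum) to $(-\infty,-t]$, and then conclude via Lemma \ref{Bddns4} for $\kappa_1,\kappa_2$ and the integrability from Lemma \ref{Bddns5} together with monotone convergence for $\kappa_3,\dots,\kappa_6$. This is exactly the paper's argument, so no further comment is needed.
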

	\begin{proposition}\label{radius_2}
		For $r\geq3$, define functions $\widetilde{\kappa}_{i}:\Omega_2\to (0, \infty), i= 1, 2, 3, 4, 5, 6,$ by the following formulae, for $\omega\in\Omega_2,$
		\begin{align*}
			[\widetilde{\kappa}_1(\omega)]^2 &:= \|\z(\omega)(0)\|_{\H},\ \ \
			[\widetilde{\kappa}_2(\omega)]^2 := \sup_{s\leq 0} \|\z(\omega)(s)\|^2_{\H}e^{2\alpha s}, \\
			[\widetilde{\kappa}_3(\omega)]^2 &:= \int_{- \infty}^{0} \|\z(\omega)(t)\|^{r+1}_{\widetilde{\L}^{r+1}}e^{2\alpha t} \d t, \ \ \ 
			[\widetilde{\kappa}_4(\omega)]^2 := \int_{- \infty}^{0} \|\z(\omega)(t)\|^2_{\H}e^{2\alpha t} \d t,\\
			[\widetilde{\kappa}_5(\omega)]^2 &:= \int_{- \infty}^{0} \|\z(\omega)(t)\|^4_{\wi\L^4}e^{2\alpha t} \d t,\ \ \ 
			[\widetilde{\kappa}_6(\omega)]^2 := \int_{- \infty}^{0} e^{2\alpha t} \d t.
		\end{align*}
		Then all these functions belongs to the class ${\mathfrak{K}}_2.$
	\end{proposition}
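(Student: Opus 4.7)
The proof parallels that of Proposition \ref{radius}, but is slightly simpler since the exponential weight no longer contains the random term $R\int_t^0\|\z(\zeta)\|^4_{\wi\L^4}\d\zeta$. The plan is to use the stationarity identity $\z(\theta_{-t}\omega)(s)=\z(\omega)(s-t)$ from \eqref{stationary} to reduce each quantity $[\widetilde{\kappa}_i(\theta_{-t}\omega)]^2e^{-2\alpha t}$ to a tail object in the variable $\sigma=s-t$, and then invoke Lemma \ref{Bddns4}(2) for the pointwise/supremum cases and Lemma \ref{Bddns5}(2) together with the Lebesgue monotone (or dominated) convergence theorem for the integral cases.

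For $\widetilde{\kappa}_1$, stationarity yields $[\widetilde{\kappa}_1(\theta_{-t}\omega)]^2 e^{-2\alpha t}=\|\z(\omega)(-t)\|^2_{\H} e^{-2\alpha t}$, whose limsup as $t\to\infty$ is $\limsup_{\sigma\to-\infty}\|\z(\omega)(\sigma)\|^2_{\H} e^{2\alpha\sigma}=0$ by Lemma \ref{Bddns4}(2). For $\widetilde{\kappa}_2$, after the change of variable $\sigma=s-t$ we obtain
\begin{align*}
[\widetilde{\kappa}_2(\theta_{-t}\omega)]^2 e^{-2\alpha t}=\sup_{\sigma\leq -t}\|\z(\omega)(\sigma)\|^2_{\H} e^{2\alpha\sigma},
\end{align*}
which again tends to $0$ as $t\to\infty$ by Lemma \ref{Bddns4}(2).

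For $\widetilde{\kappa}_3,\widetilde{\kappa}_4,\widetilde{\kappa}_5,\widetilde{\kappa}_6$, the same substitution $\sigma=t-s$ (applied inside each integral and absorbing the factor $e^{-2\alpha t}$ into the exponent) gives
\begin{align*}
&\bigl\{[\widetilde{\kappa}_3(\theta_{-t}\omega)]^2+[\widetilde{\kappa}_4(\theta_{-t}\omega)]^2+[\widetilde{\kappa}_5(\theta_{-t}\omega)]^2+[\widetilde{\kappa}_6(\theta_{-t}\omega)]^2\bigr\}e^{-2\alpha t}\\
&\qquad=\int_{-\infty}^{-t}\Bigl\{\|\z(\omega)(\sigma)\|^{r+1}_{\wi\L^{r+1}}+\|\z(\omega)(\sigma)\|^{2}_{\H}+\|\z(\omega)(\sigma)\|^{4}_{\wi\L^{4}}+1\Bigr\}e^{2\alpha\sigma}\d\sigma.
\end{align*}
By Lemma \ref{Bddns5}(2) the integrand is integrable on $(-\infty,0)$, so the right-hand side tends to $0$ as $t\to\infty$ by the Lebesgue dominated (or monotone) convergence theorem. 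This proves $\widetilde{\kappa}_3,\widetilde{\kappa}_4,\widetilde{\kappa}_5,\widetilde{\kappa}_6\in\mathfrak{K}_2$ and completes the proof. The argument involves no substantive obstacle; the only point requiring care is the bookkeeping of the change of variables in the exponent, which is straightforward here precisely because the weight $e^{2\alpha t}$ is deterministic.
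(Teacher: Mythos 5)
Your proof is correct and follows exactly the route the paper intends: the paper's own proof of Proposition \ref{radius_2} simply says it is similar to that of Proposition \ref{radius}, and you have carried out precisely that adaptation (stationarity, change of variable $\sigma=s-t$, Lemma \ref{Bddns4}(2) for $\widetilde{\kappa}_1,\widetilde{\kappa}_2$, and Lemma \ref{Bddns5}(2) plus the vanishing of tails of an integrable function for $\widetilde{\kappa}_3,\dots,\widetilde{\kappa}_6$). The only blemishes are cosmetic: the substitution in the integral case should read $\sigma=s-t$ (not $t-s$), and you square the $\H$-norm in the $\widetilde{\kappa}_1$ computation although the definition has it unsquared — the same harmless inconsistency already present in the paper's proof of Proposition \ref{radius}, and the conclusion holds either way by \eqref{rho}.
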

	\begin{proof}
		Proof is similar to the proof of Proposition \ref{radius}.
	\end{proof}
	
	\begin{theorem}\label{Main_theorem_1}
		Suppose that the Assumptions \ref{assumpO} (for domain $\mathcal{O}$), \ref{assump1} (for $r\in[1,3)$) and \ref{assump2} (for $r\geq3$)  are satisfied. Consider the MDS, $\Im = (\Omega_i, \hat{\mathcal{F}}_i, \hat{\mathbb{P}}_i, \hat{\theta})$ from Proposition \ref{m-DS1}, and the RDS $\varphi$ on $\H$ over $\Im$ generated by the stochastic convective Brinkman-Forchheimer equations \eqref{S-CBF} with additive noise satisfying the Assumptions \ref{assump1} (for $r\in[1,3)$) and \ref{assump2} (for $r\geq3$). Then, for $i\in\{1,2\}$, there exists a unique random $\mathfrak{DK}_{i}$-attractor for continuous RDS $\varphi$ in $\H$.
	\end{theorem}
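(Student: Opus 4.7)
The plan is to invoke Theorem 2.8 of \cite{BCLLLR}, which reduces the task to verifying three items: continuity of the cocycle $\varphi$, existence of a $\mathfrak{DK}_i$-absorbing closed and bounded random set, and $\mathfrak{DK}_i$-asymptotic compactness of $\varphi$ on $\H$ (for $i=1$ when $d=2$, $r\in[1,3)$ and for $i=2$ in the remaining regimes). Continuity is immediate from Theorem \ref{RDS_Conti1}. So the proof splits into two main blocks.

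For the absorbing set, the idea is to plug $\omega \mapsto \theta_{-t}\omega$ into the energy estimate \eqref{Energy_esti1} of Lemma \ref{RA1} with $\tau=-t$, $t$ replaced by $0$, and $\v(\tau) = x - \z(\theta_{-t}\omega)(-t)$. Using the stationarity identity $\z(\theta_{-t}\omega)(\sigma) = \z(\omega)(\sigma-t)$ and the substitution $\sigma = s - t$, the right-hand side becomes, up to constants, $\|x - \z(\omega)(-t)\|_{\H}^2 \, e^{-2\alpha t + R\int_{-t}^0 \|\z(\omega)(\zeta)\|_{\widetilde{\L}^4}^4 \d\zeta}$ plus an integral term over $(-\infty,0)$ of the type bounded in Lemma \ref{Bddns5}. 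For any $D\in\mathfrak{DK}_i$ with radius $\kappa(D(\cdot))\in\mathfrak{K}_i$, Definition \ref{RA2} forces the first term to vanish as $t\to\infty$, while Propositions \ref{radius}--\ref{radius_2} identify the second term as $[\kappa(\omega)]^2$ with $\kappa\in\mathfrak{K}_i$. Choosing $\rho(\omega)$ to be twice this expression plus $1$, I get $\rho\in\mathfrak{K}_i$, and the closed ball $B(\omega):=\{x\in\H:\|x\|_{\H}\leq\rho(\omega)\}$ is a $\mathfrak{DK}_i$-absorbing set.

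For $\mathfrak{DK}_i$-asymptotic compactness, the obstacle is that $\V\hookrightarrow\H$ is not compact on Poincar\'e domains, so I cannot read off precompactness from a priori bounds. Instead I follow the energy-equality method. Fix $\omega\in\Omega_i$, a sequence $t_n\to\infty$ and $x_n\in D(\theta_{-t_n}\omega)$ with $D\in\mathfrak{DK}_i$. By the absorbing property, $\{\varphi(t_n,\theta_{-t_n}\omega)x_n\}$ is eventually bounded in $\H$, so along a subsequence $\varphi(t_n,\theta_{-t_n}\omega)x_n \xrightharpoonup{w} \boldsymbol{u}^\star$ for some $\boldsymbol{u}^\star\in\H$. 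For each fixed $T>0$, write $\varphi(t_n,\theta_{-t_n}\omega)x_n = \varphi(T,\theta_{-T}\omega)\varphi(t_n-T,\theta_{-t_n}\omega)x_n$ using the cocycle property, and deduce by another application of Lemma \ref{RA1} that $\{\varphi(t_n-T,\theta_{-t_n}\omega)x_n\}_n$ is bounded in $\H$. Extracting a weak limit $\boldsymbol{\xi}_T$, Lemma \ref{weak_topo2} gives $\varphi(T,\theta_{-T}\omega)\boldsymbol{\xi}_T = \boldsymbol{u}^\star$, and the weak convergence in $\mathrm{L}^2(0,T;\V)\cap \mathrm{L}^{r+1}(0,T;\widetilde{\L}^{r+1})$ from Lemma \ref{weak_topo1} allows me to pass to the limit in the nonlinear terms of the energy equality \eqref{Energy_esti2} via Corollaries \ref{convergence_b1}, \ref{convergence_b2} and \ref{convergence_c3_1} (combined with the monotonicity inequalities \eqref{fe2_1}--\eqref{fe2_3} and \eqref{MO_c} to control the nonlinear damping).

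The hard part is upgrading weak convergence to strong convergence. Applying \eqref{Energy_esti2} on $[-T,0]$ to the solution starting from $\boldsymbol{\xi}_T -\z(\theta_{-T}\omega)(-T)$, and using the Lemma \ref{Bddns4}-type smallness of $\|\v_n(-T)\|_{\H}^2 \exp(-2\alpha T+R\int_{-T}^0\|\z\|_{\widetilde{\L}^4}^4)$ as $T\to\infty$ for trajectories originating in $D(\theta_{-t_n}\omega)$, one obtains
\[
\limsup_{n\to\infty} \|\varphi(t_n,\theta_{-t_n}\omega)x_n\|_{\H}^2 \leq \|\boldsymbol{u}^\star\|_{\H}^2 + \varepsilon(T), \qquad \varepsilon(T)\to 0 \text{ as } T\to\infty.
\]
Letting $T\to\infty$ gives $\limsup_n \|\varphi(t_n,\theta_{-t_n}\omega)x_n\|_{\H} \leq \|\boldsymbol{u}^\star\|_{\H}$, which combined with weak convergence yields strong convergence in $\H$. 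This establishes $\mathfrak{DK}_i$-asymptotic compactness, and Theorem 2.8 of \cite{BCLLLR} then produces the random $\mathfrak{DK}_i$-attractor. Uniqueness follows from the standard fact that a random attractor minimally attracting every element of $\mathfrak{DK}_i$ is unique within the universe of closed, bounded, $\mathfrak{DK}_i$-measurable random sets.
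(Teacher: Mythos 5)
Your proposal is correct and follows essentially the same route as the paper: reduction via Theorem 2.8 of \cite{BCLLLR}, construction of the absorbing ball from the energy estimate \eqref{Energy_esti1} together with the classes $\mathfrak{K}_i$, and asymptotic compactness by the energy-equation method (backward extraction of weak limits $\boldsymbol{\xi}_T$, which the paper realizes as a full negative trajectory $\{\y_{-j}\}$ via a diagonal subsequence, followed by passage to the limit in \eqref{Energy_esti2} using Corollaries \ref{convergence_b1}, \ref{convergence_b2}, \ref{convergence_c3_1} and weak lower semicontinuity of the weighted $\V$- and $\widetilde{\L}^{r+1}$-norms). The only cosmetic difference is that the paper controls the damping term by lower semicontinuity rather than by the monotonicity inequalities you cite, and it makes explicit the diagonal argument needed so that the estimate holds along a single subsequence for all $T$.
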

	\begin{proof}
		Because of \cite[Theorem 2.8]{BCLLLR}, it is only needed to prove that there exists a $\mathfrak{DK}_i$-absorbing set $\textbf{B}_i\in \mathfrak{DK}_i$ and the RDS $\varphi$ is $\mathfrak{DK}_i$-asymptotically compact. 	
		\vskip 0.2 cm 
		\noindent
		\textbf{Existence of $\mathfrak{DK}_i$-absorbing set $\textbf{B}_i\in \mathfrak{DK}_i$:}	Let $\mathrm{D}_i$ be a random set from the class $\mathfrak{DK}_i,$ for $i=1,2$. Let $\kappa_{\mathrm{D}_1}(\omega)$ and $\widetilde{\kappa}_{\mathrm{D}_2}(\omega)$ be the radii of $\mathrm{D}_1(\omega)$ and $\mathrm{D}_2(\omega)$ respectively, that is, $\kappa_{\mathrm{D}_1}(\omega):= \sup\{\|x\|_{\H} : x \in \mathrm{D}_1(\omega)\}, \ \omega\in \Omega_1$ and $\widetilde{\kappa}_{\mathrm{D}_2}(\omega):= \sup\{\|x\|_{\H} : x \in \mathrm{D}_2(\omega)\}, \ \omega\in \Omega_2$.
		
		Let $\omega\in \Omega_i$ be fixed. For given $s\leq 0$ and $\boldsymbol{x}\in \H$, let $\v$ be the solution of \eqref{cscbf} on the time interval $[s, \infty)$ with the initial condition $\v(s)= \boldsymbol{x}-\z(s).$ For $r\in[1,3)$, using \eqref{Energy_esti1} for $t=0 \text{ and } \tau=s\leq0$, we obtain 
		\begin{align}\label{Energy_esti5}
			\|\v(0)\|^2_{\H} \leq &\ \ 2 \|\boldsymbol{x}\|^2_{\H}\  e^{2\alpha s +R \int_{s}^{0}\|\z(\zeta)\|^{4}_{\widetilde{\L}^{4}}\d\zeta}  + 2 \|\z(s)\|^2_{\H}\  e^{2\alpha s +R \int_{s}^{0}\|\z(\zeta)\|^{4}_{\widetilde{\L}^{4}}\d\zeta} \nonumber\\
			&+ C\int_{s}^{0} \bigg\{ \|\z(t)\|^{2}_{\H}+\|\z(t)\|^{4}_{\widetilde{\L}^{4}}+ \|\z(t)\|^{r+1}_{\widetilde{\L}^{r+1}}  + \|\f\|^2_{\V'}\bigg\}e^{2\alpha t +R \int_{t}^{0}\|\z(\zeta)\|^{4}_{\widetilde{\L}^{4}}\d\zeta} \d t,
		\end{align}
		and for $r\geq3$, using \eqref{Energy_esti1}, we get 
		\begin{align}\label{Energy_esti5_2}
			\|\v(0)\|^2_{\H} &\leq 2 \|\boldsymbol{x}\|^2_{\H}\ e^{2\alpha s} + 2 \|\z(s)\|^2_{\H}\ e^{2\alpha s} \nonumber\\&\quad+C \int_{s}^{0} \bigg\{ \|\z(t)\|^{2}_{\H}+\|\z(t)\|^{4}_{\widetilde{\L}^{4}}+ \|\z(t)\|^{r+1}_{\widetilde{\L}^{r+1}}  + \|\f\|^2_{\V'}\bigg\}e^{2\alpha t}\d t.
		\end{align}
		For $\omega\in \Omega_1,$ let us set
		\begin{align}
			[\kappa_{11}(\omega)]^2 &= 2 +  2\sup_{s\leq 0}\bigg\{ \|\z(s)\|^2_{\H}\  e^{2\alpha s +R \int_{s}^{0}\|\z(\zeta)\|^{4}_{\widetilde{\L}^{4}}\d\zeta}\bigg\} +C \int_{- \infty}^{0} \bigg\{\|\z(s)\|^2_{\H} + \|\z(s)\|^4_{\widetilde{\L}^4} \nonumber\\
			&\quad \qquad +\|\z(t)\|^{r+1}_{\widetilde{\L}^{r+1}} +  \|\f\|^2_{\V'}\bigg\}e^{2\alpha t +R \int_{t}^{0}\|\z(\zeta)\|^{4}_{\widetilde{\L}^{4}}\d\zeta} \d t,\\	
			\kappa_{12}(\omega)=& \ \  \|\z(\omega)(0)\|_{\H}.
		\end{align}
		Invoking Lemma \ref{Bddns5} and Proposition \ref{radius}, we get that both $\kappa_{11},\kappa_{12}\in  \mathfrak{K}_1$ and also that $\kappa_{11}+\kappa_{12}=:\kappa_{13} \in \mathfrak{K}_1$ as well. Therefore the random set $\textbf{B}_1$ defined by $$\textbf{B}_1(\omega) := \{\u\in\H: \|\u\|_{\H}\leq \kappa_{13}(\omega)\}$$ is such that $\textbf{B}_1\in\mathfrak{DK}_1.$ For $\omega\in \Omega_2,$ let us set
		\begin{align}
			[\widetilde{\kappa}_{11}(\omega)]^2 &= 2 +  2\sup_{s\leq 0}\bigg\{ \|\z(s)\|^2_{\H}\  e^{2\alpha s}\bigg\} +C \int_{- \infty}^{0} \bigg\{\|\z(s)\|^2_{\H} + \|\z(s)\|^4_{\widetilde{\L}^4} \nonumber\\&\qquad+\|\z(t)\|^{r+1}_{\widetilde{\L}^{r+1}} +  \|\f\|^2_{\V'}\bigg\}e^{2\alpha t} \d t.
		\end{align}
		Invoking Lemma \ref{Bddns5} and Proposition \ref{radius_2} we get that both $\widetilde{\kappa}_{11},\kappa_{12}\in \mathfrak{K}_2$ and also that $\widetilde{\kappa}_{11}+\kappa_{12}=:\widetilde{\kappa}_{13}\in \mathfrak{K}_2$ as well. Therefore the random set $\textbf{B}_2$ defined by $$\textbf{B}_2(\omega) := \{\u\in\H: \|\u\|_{\H}\leq \widetilde{\kappa}_{13}(\omega)\}$$ is such that $\textbf{B}_2\in\mathfrak{DK}_2.$
		
		Let us now prove that $\textbf{B}_i$ absorbs $\mathrm{D}_i$. Let $\omega\in\Omega_i$ be fixed. Since $\kappa_{\mathrm{D}_1}(\omega)\in \mathfrak{K}_1$ and $\widetilde{\kappa}_{\mathrm{D}_2}(\omega)\in \mathfrak{K}_2$, there exists $t_{\mathrm{D}_i}(\omega)\geq 0$ such that 
		\begin{align*}
			[\kappa_{\mathrm{D}_1}(\theta_{-t}\omega)]^2 e^{-2\alpha t +R\int_{-t}^{0}\|\z(\omega)(s)\|^{4}_{\widetilde{\L}^{4}}\d s} &\leq 1, \ \ \ \text{for}\  t\geq t_{\mathrm{D}_1}(\omega), 
		\end{align*}
		and
		\begin{align*}
			[\widetilde{\kappa}_{\mathrm{D}_2}(\theta_{-t}\omega)]^2 e^{-2\alpha t}&\leq 1, \ \ \ \text{for}\  t\geq t_{\mathrm{D}_2}(\omega). 
		\end{align*}
		Thus, for $\omega\in\Omega_i$, if $\boldsymbol{x}\in \mathrm{D}_i(\theta_{-t}\omega)$ and $s\leq- t_{\mathrm{D}_i}(\omega),$ then by \eqref{Energy_esti5}, we obtain  $$\|\v(0,\omega; s, \boldsymbol{x}-\z(s))\|_{\H}\leq \kappa_{11}(\omega), \ \ \ \text{ for } \omega\in \Omega_1 $$ and by \eqref{Energy_esti5_2}, we have $$\|\v(0,\omega; s, \boldsymbol{x}-\z(s))\|_{\H}\leq \widetilde{\kappa}_{11}(\omega), \ \ \ \text{ for } \omega\in \Omega_2.$$ Thus, we conclude that, for $\omega\in\Omega_1$  $$\|\u(0,\omega; s, \boldsymbol{x})\|_{\H} \leq \|\v(0,\omega; s, \boldsymbol{x}-\z(s))\|_{\H} + \|\z(\omega)(0)\|_{\H}\leq \kappa_{13}(\omega),$$ and for $\omega\in\Omega_2$ $$\|\u(0,\omega; s, \boldsymbol{x})\|_{\H} \leq \|\v(0,\omega; s, \boldsymbol{x}-\z(s))\|_{\H} + \|\z(\omega)(0)\|_{\H}\leq \widetilde{\kappa}_{13}(\omega).$$The above inequalities implies that for $\omega\in \Omega_i$, $\u(0,\omega; s, \boldsymbol{x}) \in \textbf{B}_i(\omega)$, for all $s\leq -t_{\mathrm{D}_i}(\omega).$ This proves  $\textbf{B}_i$ absorbs $\mathrm{D}_i$.	
		\vskip 0.2 cm 
		\noindent
		\textbf{The RDS $\varphi$ is $\mathfrak{DK}_i$-asymptotically compact.} 		
		Let us assume that $\mathrm{D}_i \in \mathfrak{DK}_i$ and $\textbf{B}_i\in \mathfrak{DK}_i$ be such that $\textbf{B}_i$ absorbs $\mathrm{D}_i$. Let us fix $\omega\in \Omega_i$ and take a sequence of positive numbers $\{t_m\}^{\infty}_{m=1}$ such that $t_1\leq t_2 \leq t_3 \leq \cdots$ and $t_m \to \infty$. We take an $\H$-valued sequence $\{\boldsymbol{x}_m\}^{\infty}_{m=1}$ such that $\boldsymbol{x}_m \in \mathrm{D}_i(\theta_{-t}\omega),$ for all $m\in \mathbb{N}.$
		\vskip 0.2 cm 
		\noindent 
		\textbf{Step I.} \textit{Reduction.} Since $\textbf{B}_i$ absorbs $\mathrm{D}_i$, we obtain  \begin{align}\label{619}\varphi(t_m, \theta_{-t_m}\omega, \mathrm{D}_i(\theta_{-t_m}\omega))\subset \textbf{B}_i(\omega),\end{align} 
		for sufficient large $m\in \mathbb{N}.$ Since $\textbf{B}_i(\omega) \subset \H$ is a bounded set, which implies that $\textbf{B}_i(\omega)$ is weakly pre-compact in $\H$, without loss of generality, we may assume that \eqref{619} holds for all $m\in \mathbb{N}$ and, for some $\y_0\in\H,$ 
		\begin{align}\label{weak_lim1}
			\varphi(t_m, \theta_{-t_m}\omega, \boldsymbol{x}_m)\xrightharpoonup{w} \y_0 \ \text{ in } \ \H.
		\end{align}
		Since $\z(0)\in \H,$ we also have
		\begin{align}\label{weak_lim2}
			\varphi(t_m, \theta_{-t_m}\omega, \boldsymbol{x}_m)-\z(0) \xrightharpoonup{w}  \y_0-\z(0)\ \text{ in } \ \H.
		\end{align}
		Then by the weak lower semicontinuity of the $\H$-norm, we get 
		\begin{align}\label{weak_lim3}
			\|\y_0-\z(0)\|_{\H} \leq \liminf_{m\to \infty} \|\varphi(t_m, \theta_{-t_m}\omega, \boldsymbol{x}_m)-\z(0)\|_{\H}.
		\end{align}
		Now it is only need to show that for some subsequence $\{m'\}\subset \mathbb{N}$
		\begin{align}\label{weak_lim4}
			\|\y_0-\z(0)\|_{\H} \geq \limsup_{m'\to \infty} \|\varphi(t_{m'}, \theta_{-t_{m'}}\omega, \boldsymbol{x}_{m'})-\z(0)\|_{\H}.
		\end{align}
		In fact, since $\H$ is a Hilbert space, \eqref{weak_lim3} combined with \eqref{weak_lim4} imply that $$\varphi(t_m, \theta_{-t_m}\omega, \boldsymbol{x}_m) - \z(0) \to \y_0-\z(0)$$ in $\H$,
		which implies that $\varphi(t_m, \theta_{-t_m}\omega, \boldsymbol{x}_m) \to \y_0$ in $\H$.
		\vskip 0.2 cm
		\noindent
		\textbf{Step II.} \textit{Construction of a negative trajectory, that is, a sequence $\{\y_m\}^0_{m=-\infty}$ such that $\y_m\in \boldsymbol{B}_i(\theta_m\omega), m\in \mathbb{Z}^{-},$ and 
			$$\y_j = \varphi(j-m, \theta_m\omega, \y_m), \ \ \ m<j\leq 0.$$}
		Since $\textbf{B}_i$ absorbs $\mathrm{D}_i$, there exists a constant $N_i(\omega)\in \mathbb{N},$ such that$$ \{\varphi(-1 + t_m, \theta_{1- t_m}\theta_{-1}, \boldsymbol{x}_m) : m\geq N_i(\omega)\} \subset \textbf{B}_i(\theta_{-1}\omega).$$ Thus, there exists a subsequence $\{m'\}\subset\mathbb{N}$ and $\y_{-1} \in \textbf{B}_i(\theta_{-1}\omega)$ such that
		\begin{align}\label{weak_lim5}
			\varphi(-1+t_{m'}, \theta_{-t_{m'}}\omega, \boldsymbol{x}_{m'}) \text{ converges to } \y_1\ \text{ in }\  \H \text{ weakly}.
		\end{align}
		Now, using the cocycle property of $\varphi$, with $t=1, \ s=-1 +  t_{m'} ,$ and $\omega$ being replaced by $\theta_{-t_{m'}}\omega,$ we have:
		\begin{align*}
			\varphi(t_{m'}, \theta_{-t_{m'}}\omega) = \varphi(1, \theta_{-1}\omega)\varphi(-1 + t_{m'}, \theta_{-t_{m'}}\omega).
		\end{align*}
		Using Lemma \ref{weak_topo2}, from \eqref{weak_lim1} and \eqref{weak_lim5}, we derive that $\varphi(1, \theta_{-1}\omega, \y_{-1}) = \y_0.$ Making use of mathematical induction, for each $j= 1, 2, \ldots,$ the construction of a subsequence $\{m^{(j)}\}\subset \{m^{(j-1)}\}$ and $\y_{-j}\in \textbf{B}_i(\theta_{-j}\omega)$ is possible such that $\varphi(1, \theta_{-j}\omega, \y_{-j})= \y_{-j+1}$ and 
		\begin{align}\label{weak_lim6}
			\varphi(-j + t_{m^{(j)}}, \theta_{-t_{m^{(j)}}}\omega, \boldsymbol{x}_{m^{(j)}})\ \text{ converges to }\ \y_{-j} \  \text{ in }\ \H\ \text{ weakly}  \text{ as }  m^{(j)} \to \infty.
		\end{align}
		As discussed above, using the cocycle property of $\varphi$, with $t=j,\ s=-j +t_{m^{(j)}}$ and $\omega$ being replaced by $\theta_{-t_{m^{(j)}}}\omega,$ gives
		\begin{align}\label{weak_lim7}
			\varphi(t_{m^{(j)}}, \theta_{-t_{m^{(j)}}}\omega) = \varphi(j, \theta_{-j}\omega)\varphi(t_{m^{(j)}}-j, \theta_{-t_{m^{(j)}}}\omega), \ \ \ j\in \mathbb{N}.
		\end{align}
		Hence, by using Lemma \ref{weak_topo2} with \eqref{weak_lim6}, we obtain 
		\begin{align}\label{weak_lim8}
			\y_{-k} = & \ \textrm{w-} \lim_{m^{(j)} \to \infty} \varphi (-k+t_{m^{(j)}}, \theta_{-t_{m^{(j)}}}\omega, \boldsymbol{x}_{m^{(j)}})\nonumber\\
			=&\ \textrm{w-} \lim_{m^{(j)} \to \infty}\varphi\big(-k+j, \theta_{-j}\omega, \varphi (t_{m^{(j)}}-j, \theta_{-t_{m^{(j)}}}\omega, \boldsymbol{x}_{m^{(j)}})\big)\nonumber\\
			=&\  \varphi\bigg(-k+j, \theta_{-j}\omega,\big(\textrm{w-} \lim_{m^{(j)} \to \infty}\varphi (t_{m^{(j)}}-j, \theta_{-t_{m^{(j)}}}\omega, \boldsymbol{x}_{m^{(j)}})\big)\bigg)\nonumber\\
			= & \ \varphi (-k+j, \theta_{-j}\omega, \y_{-j}),
		\end{align}
		where $\textrm{w-}\lim$ represents the weak limit in $\H.$ Similarly, one can obtain $$\varphi(k, \theta_{-j}\omega, \y_{-j})= \y_{-j+k}, \ \text{ if }\ 0\leq k\leq j.$$	
		More precisely, in \eqref{weak_lim8}, $\y_{-k} = \u(-k, -j; \omega,\y_{-j}),$ where $\u$ is given by \eqref{combine_sol}.
		\vskip 0.2 cm
		\noindent
		\textbf{Step III.} \textit{Proof of \eqref{weak_lim4}.} For further proof, we fix $j\in\mathbb{N}$ (until explicitly stated), and consider the system \eqref{S-CBF} on $[-j, 0].$ From \eqref{combine_sol} and \eqref{weak_lim7}, for $t=0$ and $s=-k,$ we get
		\begin{align}\label{weak_lim9}
			&\|\varphi(t_{m^{(j)}}, \theta_{-t_{m^{(j)}}}\omega, \boldsymbol{x}_{m^{(j)}}) - \z(0)\|^2_{\H}\nonumber\\ &=  \|\varphi\big(j, \theta_{-j}\omega,\varphi(t_{m^{(j)}}-j, \theta_{-t_{m^{(j)}}}\omega, \boldsymbol{x}_{m^{(j)}})\big) - \z(0)\|^2_{\H}\nonumber\\
			&=\|\v\big(0, -j; \omega, \varphi(t_{m^{(j)}}-j, \theta_{-t_{m^{(j)}}}\omega, \boldsymbol{x}_{m^{(j)}}) - \z(-j)\big)\|^2_{\H}.
		\end{align}
		Let $\v$ be the solution to the system \eqref{cscbf} on $[-j, \infty)$ with the initial data at time $-j:$ $$\v(-j) = \varphi(t_{m^{(j)}}-j, \theta_{-t_{m^{(j)}}}\omega, \boldsymbol{x}_{m^{(j)}}) - \z(-j).$$ Also, we can write $$\v(s) = \v\big(s, -j; \omega, \varphi(t_{m^{(j)}}-j, \theta_{-t_{n^{(j)}}}\omega, \boldsymbol{x}_{m^{(j)}}) - \z(-j)\big), \ \ \ s\geq -j.$$
		Using \eqref{Energy_esti2} for $t=0$ and $\tau = -j$, we obtain 
		\begin{align}\label{Energy_esti6}
			&\|\varphi(t_{m^{(j)}}, \theta_{-t_{m^{(j)}}}\omega, \boldsymbol{x}_{m^{(j)}}) - \z(0)\|^2_{\H}\nonumber\\ 
			&= e^{- 2\alpha j} \|\varphi(t_{m^{(j)}}-j, \theta_{-t_{m^{(j)}}}\omega, \boldsymbol{x}_{m^{(j)}}) - \z(-j)\|^2_{\H}  \nonumber\\
			&\quad+ 2 \int_{-j}^{0} e^{2\alpha s}\bigg(b(\v(s), \v(s), \z(s))-b(\z(s), \z(s), \v(s)) - \beta\big\langle\mathcal{C}(\v(s) +\z(s)),\z(s)\big\rangle  \nonumber\\
			& \quad+(\chi-\alpha)(\z(s),\v(s)) +\big\langle \f, \v(s)\big \rangle- \mu\|\v(s)\|^2_{\V}-\beta\|\v(s)+\z(s)\|^{r+1}_{\wi\L^{r+1}}\bigg) \d s.
		\end{align}	
		In order to  complete the proof, we only need to prove the existence of a function (non-negative) $\h\in \mathrm{L}^1 (-\infty, 0)$ such that  
		\begin{align}\label{weak_lim10}
			\limsup_{m^{(j)}\to \infty} \|\varphi(t_{m^{(j)}}, \theta_{-t_{m^{(j)}}}\omega, \boldsymbol{x}_{m^{(j)}}) - \z(0)\|^2_{\H}\leq \int_{-\infty}^{-j} \h(s)\ \d s + \|\y_0 -\z(0)\|^2_{\H}.
		\end{align}
		For this, if we define the diagonal process $\{n_k\}^{\infty}_{k=1}$ by $n_k=k^{(k)}, k\in \mathbb{N},$ the sequence $\{n_k\}^{\infty}_{k=j}$ is a subsequence of the sequence $(m^{(j)})$ and hence by \eqref{weak_lim10}, 
		\begin{align}\label{weak_lim11}
			\limsup_{k} \|\varphi(t_{n_k}, \theta_{-t_{n_k}}\omega, \boldsymbol{x}_{n_k}) - \z(0)\|^2_{\H}\leq \int_{-\infty}^{-j} \h(s)\ \d s + \|\y_0 -\z(0)\|^2_{\H}.
		\end{align}
		Taking the limit $j\to \infty$ in \eqref{weak_lim11}, we arrive at
		\begin{align*}
			\limsup_{j} \|\varphi(t_{n_k}, \theta_{-t_{n_k}}\omega, \boldsymbol{x}_{n_k}) - \z(0)\|^2_{\H}\leq \|\y_0 -\z(0)\|^2_{\H},
		\end{align*}
		which proves \eqref{weak_lim4}.
		\vskip 0.2 cm
		\noindent
		\textbf{Step IV.}
		\textit{Proof of \eqref{weak_lim10}.} Firstly, we estimate the first term on the right hand side of \eqref{Energy_esti6}. 
		\vskip 0.2 cm
		\noindent
		\textbf{Case I:} Let us take  $r\in[1,3)$ and $\omega\in \Omega_1$.
		If $-t_{m^{(j)}}<-j,$ then by \eqref{combine_sol} and \eqref{Energy_esti1}, we obtain 
		\begin{align}\label{Energy_esti7}
			&\|\varphi(t_{m^{(j)}}-j, \theta_{-t_{m^{(j)}}}\omega, \boldsymbol{x}_{m^{(j)}}) - \z(-j)\|^2_{\H}\ e^{-2\alpha j}\nonumber\\ &=\|\v\big(-j, -t_{m^{(j)}}; \omega, \boldsymbol{x}_{m^{(j)}}-\z(-t_{m^{(j)}})\big)\|^2_{\H} \ e^{-2\alpha j}\nonumber\\
			&\leq e^{-2\alpha j}  \bigg[\|\boldsymbol{x}_{m^{(j)}}-\z(-t_{m^{(j)}})\|^2_{\H}\  e^{-2\alpha(t_{m^{(j)}}-j) +R \int_{-t_{m^{(j)}}}^{-j}\|\z(s)\|^{4}_{\widetilde{\L}^{4}}\d s}  \nonumber\\
			&\quad+ C\int_{-t_{m^{(j)}}}^{-j} \bigg\{ \|\z(s)\|^{2}_{\H}+\|\z(s)\|^{4}_{\widetilde{\L}^{4}}+ \|\z(s)\|^{r+1}_{\widetilde{\L}^{r+1}} + \|\f\|^2_{\V'}\bigg\}e^{-2\alpha(-j-s) +R \int_{s}^{-j}\|\z(\zeta)\|^{4}_{\widetilde{\L}^{4}}\d\zeta} \d s\bigg]\nonumber\\
			&\leq  2 K_{m^{(j)}}^1 + 2 K_{m^{(j)}}^2 + CK_{m^{(j)}}^3 + CK_{m^{(j)}}^4+CK_{m^{(j)}}^5 +  \|\f\|^2_{\V'}K_{m^{(j)}}^6,
		\end{align}
		where 
		\begin{align*}
			K_{m^{(j)}}^1 &=\|\boldsymbol{x}_{m^{(j)}}\|^2_{\H}\  e^{-2\alpha t_{m^{(j)}} +R \int\limits_{-t_{m^{(j)}}}^{-j}\|\z(t)\|^{4}_{\widetilde{\L}^{4}}\d s},
			K_{m^{(j)}}^2 =\|\z(-t_{m^{(j)}})\|^2_{\H}\  e^{-2\alpha t_{m^{(j)}} +R \int\limits_{-t_{m^{(j)}}}^{-j}\|\z(t)\|^{4}_{\widetilde{\L}^{4}}\d s},\\
			K_{m^{(j)}}^3 &= \int_{-\infty}^{-j} \|\z(s)\|^{2}_{\H} \ e^{2\alpha s +R \int_{s}^{-j}\|\z(\zeta)\|^{4}_{\widetilde{\L}^{4}}\d\zeta} \d s,\ \ 
			K_{m^{(j)}}^4 = \int_{-\infty}^{-j} \|\z(s)\|^{4}_{\widetilde{\L}^{4}} \ e^{2\alpha s +R \int_{s}^{-j}\|\z(\zeta)\|^{4}_{\widetilde{\L}^{4}}\d\zeta} \d s,\\
			K_{m^{(j)}}^5 &= \int_{-\infty}^{-j} \|\z(s)\|^{r+1}_{\widetilde{\L}^{r+1}} \ e^{2\alpha s +R \int_{s}^{-j}\|\z(\zeta)\|^{4}_{\widetilde{\L}^{4}}\d\zeta} \d s,\ \ 
			K_{n^{(k)}}^6 = \int_{-\infty}^{-j} e^{2\alpha s +R \int_{s}^{-j}\|\z(\zeta)\|^{4}_{\widetilde{\L}^{4}}\d\zeta} \d s.
		\end{align*}
		\vskip 0.2 cm
		\noindent
		\textbf{Case II:} For $r\geq3$ and $\omega\in \Omega_2$. If $-t_{m^{(j)}}<-j,$ then by \eqref{combine_sol} and \eqref{Energy_esti1}, we find
		\begin{align}\label{Energy_esti7_2}
			&\|\varphi(t_{m^{(j)}}-j, \theta_{-t_{m^{(j)}}}\omega, \boldsymbol{x}_{m^{(j)}}) - \z(-j)\|^2_{\H}\ e^{-2\alpha j}\nonumber\\ &=\|\v\big(-j, -t_{m^{(j)}}; \omega, \boldsymbol{x}_{m^{(j)}}-\z(-t_{m^{(j)}})\big)\|^2_{\H} \ e^{-2\alpha j}\nonumber\\
			&\leq e^{-2\alpha j}  \bigg[\|\boldsymbol{x}_{m^{(j)}}-\z(-t_{m^{(j)}})\|^2_{\H}\  e^{-2\alpha(t_{m^{(j)}}-j)}  \nonumber\\
			&\quad+ C\int_{-t_{m^{(j)}}}^{-j} \bigg\{ \|\z(s)\|^{2}_{\H}+\|\z(s)\|^{4}_{\widetilde{\L}^{4}}+ \|\z(s)\|^{r+1}_{\widetilde{\L}^{r+1}} + \|\f\|^2_{\V'}\bigg\}e^{-2\alpha(-j-s)} \d s\bigg]\nonumber\\
			&\leq  2 \widetilde{K}_{m^{(j)}}^1 + 2 \widetilde{K}_{m^{(j)}}^2 + C\widetilde{K}_{m^{(j)}}^3 + C\widetilde{K}_{m^{(j)}}^4+C\widetilde{K}_{m^{(j)}}^5 +  \|\f\|^2_{\V'}\widetilde{K}_{m^{(j)}}^6,,
		\end{align}
		where 
		\begin{align*}
			\widetilde{K}_{m^{(j)}}^1 =\|\boldsymbol{x}_{m^{(j)}}\|^2_{\H}\  e^{-2\alpha t_{m^{(j)}} },\ \ \ 
			\widetilde{K}_{m^{(j)}}^2 =\|\z(-t_{m^{(j)}})\|^2_{\H}\  e^{-2\alpha t_{m^{(j)}} },\\
			\widetilde{K}_{m^{(j)}}^3 = \int_{-\infty}^{-j} \|\z(s)\|^{2}_{\H} \ e^{2\alpha s} \d s,\ \  \
			\widetilde{K}_{m^{(j)}}^4 = \int_{-\infty}^{-j} \|\z(s)\|^{4}_{\widetilde{\L}^{4}} \ e^{2\alpha s} \d s,\\
			\widetilde{K}_{m^{(j)}}^5 = \int_{-\infty}^{-j} \|\z(s)\|^{r+1}_{\widetilde{\L}^{r+1}} \ e^{2\alpha s } \d s,\ \ \ \ \ \ \ \ \ \ \ \ \
			\widetilde{K}_{n^{(k)}}^6 = \int_{-\infty}^{-j} e^{2\alpha s} \d s.
		\end{align*}
		Let us prove the existence of a function (non-negative) $\h\in \mathrm{L}^1(-\infty, 0)$ such that 
		\begin{align}\label{weak_lim12}
			\limsup_{m^{(j)}\to \infty} \|\varphi&(t_{m^{(j)}}-j, \theta_{-t_{m^{(j)}}}\omega, \boldsymbol{x}_{m^{(j)}}) - \z(-j)\|^2_{\H}\ e^{-2\alpha k} \leq \int_{- \infty}^{-j} \h(s)\ \d s, \ \ \ j\in \mathbb{N}.
		\end{align}
		\vskip 0.2 cm
		\noindent
		\textbf{Step V.} \emph{We claim that, for $r\in[1,3)$ and $\omega\in\Omega_1$
			\begin{align}\label{Bddns6}
				\limsup_{m^{(j)}\to \infty} K_{m^{(j)}}^1 = 0,
			\end{align}
			and for $r\geq3$ and $\omega\in\Omega_2$
			\begin{align}\label{Bddns6_2}
				\limsup_{m^{(j)}\to \infty} \widetilde{K}_{m^{(j)}}^1 = 0.
		\end{align}}
		Making use of Corollary \ref{Bddns1_1}, we have for sufficiently large $m^{(j)},$
		\begin{align*}
			R \int_{-t_{m^{(j)}}}^{-j} \|\z(s)\|^{4}_{\widetilde{\L}^4} \d s \leq  \alpha(t_{m^{(j)}}-j).
		\end{align*}
		Since $\mathrm{D}_i(\omega)\subset \H$, which is bounded, we can find $\rho_3>0,$ such that $\|\boldsymbol{x}_{m^{(j)}}\|_{\H}\leq \rho_3,$ for every $m^{(j)}.$ Hence, we obtain 
		$$\limsup_{m^{(j)}\to \infty}\|\boldsymbol{x}_{m^{(j)}}\|^2_{\H}\  e^{-2\alpha t_{m^{(j)}} +R \int_{-t_{m^{(j)}}}^{-j}\|\z(s)\|^{4}_{\widetilde{\L}^{4}}\d s}\leq \limsup_{m^{(j)}\to \infty} \rho_3^2 e^{-\alpha(t_{m^{(j)}}+j)} = 0,$$ and 
		$$\limsup_{m^{(j)}\to \infty}\|\boldsymbol{x}_{m^{(j)}}\|^2_{\H}\  e^{-2\alpha t_{m^{(j)}}}\leq \limsup_{m^{(j)}\to \infty} \rho_3^2 e^{-2\alpha t_{m^{(j)}}} = 0.$$
		Therefore, in view of \eqref{Energy_esti7} and \eqref{Bddns6} for $r\in[1,3)$, and \eqref{Energy_esti7_2} and \eqref{Bddns6_2} for $r\geq3$ with Lemmas \ref{Bddns4} and \ref{Bddns5}, the proof of \eqref{weak_lim12} is completed, and we are only left to prove the inequality \eqref{weak_lim10}.
		\vskip 0.2 cm
		\noindent
		\textbf{Step VI.} Let us denote 
		\begin{align*}
			\v^{m^{(j)}}(s) &= \v\big(s, -j; \omega, \varphi(t_{m^{(j)}}-j, \theta_{-t_{m^{(j)}}}\omega)\boldsymbol{x}_{m^{(j)}} - \z(-j)\big), \ s\in (-j, 0),\\
			\v_j(s) &= \v\big(s, -j; \omega, \y_{-j} - \z(-j)\big), \ s\in (-j, 0).
		\end{align*}
		By Lemma \ref{weak_topo1} and convergence property \eqref{weak_lim6}, we conclude that 
		\begin{align}\label{weak_lim13}
			\begin{cases}
				\v^{m^{(j)}}(\cdot) \text{ converges to } \v_j(\cdot)  \text{ in } \mathrm{L}^2(-j,0;\V) \text{ weakly},\\
				\v^{m^{(j)}}(\cdot) \text{ converges to } \v_j(\cdot)  \text{ in } \mathrm{L}^{r+1}(-j,0;\widetilde{\L}^{r+1}) \text{ weakly}.
			\end{cases}
		\end{align}
		Since $e^{2\alpha \cdot }\B(\z(\cdot)) \in \mathrm{L}^2(-j,0;\V')$ (see \eqref{2.8} and \eqref{2.13}), $ e^{2\alpha \cdot} \f \in \mathrm{L}^2(-j,0;\V')$ and $e^{2\alpha \cdot }\z(\cdot)\in \mathrm{L}^2(-j,0;\H)$, we obtain 
		\begin{align}\label{weak_lim14}
			\lim_{m^{(j)} \to \infty} \int_{-j}^{0} e^{2\alpha s} b(\z(s),\z(s),\v^{m^{(j)}}(s))  \d s = \int_{-j}^{0} e^{2\alpha s} b(\z(s),\z(s), \v_j(s)) \d s,
		\end{align}
		\begin{align}\label{weak_lim15}
			\lim_{m^{(j)} \to \infty} \int_{-j}^{0} e^{2\alpha s} \big\langle \f, \v^{m^{(j)}}(s)\big\rangle \ \d s = \int_{-j}^{0} e^{2\alpha s} \big\langle \f, \v_j(s)\big\rangle \ \d s,
		\end{align}
		and
		\begin{align}\label{weak_lim15'}
			\lim_{m^{(j)} \to \infty} \int_{-j}^{0} e^{2\alpha s} ( \z(s), \v^{m^{(j)}}(s)) \ \d s = \int_{-j}^{0} e^{2\alpha s} (\z(s), \v_j(s)) \ \d s.
		\end{align}
		Since we have the convergence property \eqref{weak_lim13}, we can find a subsequence of $\{\v^{m^{(j)}}\}$  (denoted as the same) such that
		\begin{align}\label{weak_lim16}
			\v^{m^{(j)}}(\cdot) \ \text{ converges to } \ \v_j(\cdot)  \  \text{ in } \ \mathrm{L}^2(-j,0;{\L}^2_{\text{loc}}(\mathcal{O}))\  \text{ strongly}.
		\end{align} 
		Next, since $e^{2\alpha t}\z(t), t\in \R,$ is an $\H\cap\widetilde{\L}^4$-valued process (respectively, $\H\cap\widetilde{\L}^{r+1}$-valued process) for $r\in[1,3)$ (respectively, for $r\geq3$), in view of Corollary \ref{convergence_b1},  for $r\in[1,3)$  (respectively, Corollary \eqref{convergence_b2}, for $r\geq3$), along with \eqref{weak_lim13} and \eqref{weak_lim16}, we infer 
		\begin{align}\label{weak_lim17}
			\lim_{m^{(j)} \to \infty}\int_{-j}^{0} e^{2\alpha s} b\big(\v^{m^{(j)}}(s), \z(s), \v^{m^{(j)}}(s) \big)\ \d s = \int_{-j}^{0} e^{2\alpha s} b\big(\v_j(s), \z(s), \v_j(s) \big)\ \d s.
		\end{align}
		Once again using the fact that $e^{2\alpha t}\z(t), t\in \R,$ is an $\H\cap\widetilde{\L}^4$-valued and $\H\cap\widetilde{\L}^{r+1}$-valued process for $r\in[1,3)$ and $r\geq3$, respectively and invoking Corollary \ref{convergence_c3_1}, \eqref{weak_lim13} and \eqref{weak_lim16}, we arrive at
		\begin{align}\label{weak_lim18}
			\lim_{m^{(j)} \to \infty}\int_{-j}^{0} e^{2\alpha s}\big\langle\mathcal{C}(\v^{m^{(j)}}(s) +\z(s)),\z(s)\big\rangle \ \d s = \int_{-j}^{0} e^{2\alpha s} \big\langle\mathcal{C}(\v_j(s) +\z(s)),\z(s)\big\rangle\ \d s.
		\end{align}
		Now, since for any $s\in [-j, 0], e^{-2\alpha j}\leq e^{2\alpha s}\leq 1,\  (\int_{-j}^{0} e^{2\alpha s} \|\cdot\|^2_{\V}\ \d s)^{1/2} $ defines a norm in $\mathrm{L}^2(-k,0; \V)$, which is equivalent to the standard norm. Hence, from \eqref{weak_lim13}, we get
		\begin{align*}
			\int_{-j}^{0} e^{2\alpha s} \|\v_j(s)\|^2_{\V} \d s \leq \liminf_{m^{(j)}\to \infty} \int_{-j}^{0} e^{2\alpha s} \|\v^{m^{(j)}}(s)\|^2_{\V} \d s.
		\end{align*}
		We can also write the above inequality as 
		\begin{align}\label{weak_lim19}
			\limsup_{m^{(j)}\to \infty} \bigg\{- \int_{-j}^{0} e^{2\alpha s} \|\v^{m^{(j)}}(s)\|^2_{\V} \d s\bigg\}  \leq - \int_{-j}^{0} e^{2\alpha s} \|\v_j(s)\|^2_{\V} \d s.
		\end{align}
		Similarly, since for any $s\in [-j, 0], e^{-2\alpha j}\leq e^{2\alpha s}\leq 1,$   and $\bigg(\int_{-j}^{0} e^{2\alpha s} \|\cdot\|^{r+1}_{\widetilde{\L}^{r+1}} \d s\bigg)^{\frac{1}{r+1}} $ defines a norm in $\mathrm{L}^{r+1}(-k,0;\widetilde{\L}^{r+1} )$, which is equivalent to the standard norm. Thus, from \eqref{weak_lim13}, we obtain 
		\begin{align}\label{weak_lim21}
			\limsup_{m^{(j)}\to \infty} \bigg\{- \int_{-j}^{0} e^{2\alpha s} \|\v^{m^{(j)}}(s) + \z(s)\|^{r+1}_{\widetilde{\L}^{r+1}}\ \d s \bigg\} \leq - \int_{-j}^{0} e^{2\alpha s} \|\v_j(s) + \z(s)\|^{r+1}_{\widetilde{\L}^{r+1}}\ \d s.
		\end{align}
		From \eqref{Energy_esti6}, \eqref{weak_lim12}, \eqref{weak_lim14}-\eqref{weak_lim15'}, \eqref{weak_lim17} and  \eqref{weak_lim18}, and inequalities \eqref{weak_lim19} and \eqref{weak_lim21}, we conclude
		\begin{align}\label{Energy_esti8}
			&	\limsup_{m^{(j)}\to \infty} \|\varphi(t_{m^{(j)}}, \theta_{-t_{m^{(j)}}}\omega, \boldsymbol{x}_{m^{(j)}}) - \z(0)\|^2_{\H}\nonumber\\ 
			&\leq \int_{- \infty}^{-j} \h(s)\d s  + 2 \int_{-j}^{0} e^{2\alpha s}\bigg(b(\v_j(s), \v_j(s), \z(s))- b(\z(s),\z(s), \v_j(s)) \nonumber\\
			&\quad+\beta\big\langle\mathcal{C}(\v_j(s) +\z(s)),\z(s)\big\rangle  +(\chi-\alpha)(\z(s),\v_j(s)) +\big\langle \f, \v_j(s)\big \rangle \nonumber\\&\quad- \|\v_j(s)\|^2_{\V}- \beta\|\v_j(s) + \z(s)\|^{r+1}_{\widetilde{\L}^{r+1}}\bigg) \d s.
		\end{align} 
		Now, by \eqref{weak_lim8} and \eqref{Energy_esti2}, we obtain 
		\begin{align}\label{Energy_esti9}
			&	\|\y_0-\z(0)\|^2_{\H}\nonumber \\&=  \|\varphi(j, \theta_{-j}\omega,\y_{-j}) - \z(0)\|^2_{\H}= \|\v\big(0, -j; \omega, \y_{-j} - \z(-j)\big)\|^2_{\H}\nonumber\\
			&	= \|\y_{-j}-\z(-j)\|^2_{\H}\ e^{-2\alpha j} + 2 \int_{-j}^{0} e^{2\alpha s}\bigg(b(\v_j(s), \v_j(s), \z(s))- b(\z(s),\z(s), \v_j(s)) \nonumber\\
			&\quad+\beta\big\langle\mathcal{C}(\v_j(s) +\z(s)),\z(s)\big\rangle  +(\chi-\alpha)(\z(s),\v_j(s)) +\big\langle \f, \v_j(s)\big \rangle - \|\v_j(s)\|^2_{\V}\nonumber\\&\quad- \beta\|\v_j(s) + \z(s)\|^{r+1}_{\widetilde{\L}^{r+1}}\bigg) \d s.
		\end{align}
		After combining \eqref{Energy_esti8} with \eqref{Energy_esti9}, we find 
		\begin{align*}
			\limsup_{m^{(j)}\to \infty} \|\varphi(t_{m^{(j)}}, \theta_{-t_{m^{(j)}}}\omega, \boldsymbol{x}_{m^{(j)}}) - \z(0)\|^2_{\H}
			\leq \int_{- \infty}^{-j} \h(s)\ \d s + \|\y_0-\z(0)\|^2_{\H}, 
		\end{align*}
		which shows \eqref{weak_lim10} and hence we conclude the proof of Theorem \ref{Main_theorem_1}. 
	\end{proof}

	\section{Invariant Measures}\label{sec7}\setcounter{equation}{0}
This section is devoted to show the existence of invariant measures for SCBF equations in $\H$. It is demonstrated in \cite{CF} that the existence of compact invariant random set is a sufficient condition for the existence of invariant measures, that is, if a random dynamical system $\varphi$ has compact invariant random set, then there exist invariant measures for $\varphi$ (\cite[Corollary 4.4]{CF}). Since, the random attractor itself is a compact invariant random set, the existence of invariant measures for the 2D SCBF equations \eqref{S-CBF} is a direct consequence of \cite[Corollary 4.4]{CF} and Theorem \ref{Main_theorem_1}. The existence of random attractors for 2D stochastic NSE in unbounded Poincar\'e domains has been established in \cite{BMO,BL}, etc.  Recently, the existence and uniqueness  of invariant measures for 2D stochastic NSE perturbed by a linear multiplicative Gaussian noise defined on the whole space has  been obtained in \cite{KM8}.  The existence of a unique invariant measure for 2D SCBF equations \eqref{SCBF} (for $r\in[1,3]$) defined on Poincar\'e domains (bounded or unbounded) in $\H$ is established in \cite{KM2}. Therefore, in this section, we prove the existence of unique invariant measures for SCBF equations \eqref{SCBF} for $d=2,3$ with $r\geq3$ ($r=3$ with $2\beta\mu\geq1$).

\subsection{Existence of invariant measures}
Let us define the transition operator $\{\mathrm{P}_t\}_{t\geq 0}$ by 
\begin{align}\label{71}
	\mathrm{P}_t f(\x)=\int_{\Omega}f(\varphi(\omega,t,\x))\d\mathbb{P}(\omega)=\E\left[f(\varphi(t,\x))\right], 
\end{align}
  for all $f\in\mathcal{B}_b(\H)$, where $\mathcal{B}_b(\H)$ is the space of all bounded and Borel measurable functions on $\H$ and $\varphi$ is the random dynamical system corresponding to the SCBF equations \eqref{S-CBF}, which is defined by \eqref{combine_sol}. The continuity of $\varphi$ (cf. Lemma \ref{RDS_Conti1}), \cite[Proposition 3.8]{BL} provides the following result: 
\begin{lemma}\label{Feller}
	The family $\{\mathrm{P}_t\}_{t\geq 0}$ is Feller, that is, $\mathrm{P}_tf\in\C_{b}(\H)$ if $f\in\C_b(\H)$, where $\C_b(\H)$ is the space of all bounded and continuous functions on $\H$. Furthermore, for any $f\in\C_b(\H)$, $\mathrm{P}_tf(\x)\to f(\x)$ as $t\downarrow 0$. 
\end{lemma}
Analogously as in the proof of \cite[Theorem 5.6]{CF}, one can prove that $\varphi$ is a Markov random dynamical system, that is, $\mathrm{P}_{t_1+t_2}=\mathrm{P}_{t_1}\mathrm{P}_{t_2}$, for all $t_1,t_2\geq 0$. Since, we know by \cite[Corollary 4.4]{CF} that if a Markov RDS on a Polish space has an invariant compact random set, then there exists a Feller invariant probability measure $\nu$ for $\varphi$. 
\begin{definition}
	A Borel probability measure $\nu$ on $\H$  is called an \emph{invariant measure} for a Markov semigroup $\{\mathrm{P}_t\}_{t\geq 0}$ of Feller operators on $\C_b(\H)$ if and only if $$\mathrm{P}_{t}^*\nu=\nu, \ t\geq 0,$$ where $(\mathrm{P}_{t}^*\nu)(\Gamma)=\int_{\V}\mathrm{P}_{t}(\y,\Gamma)\nu(\d\y),$ for $\Gamma\in\mathcal{B}(\H)$ and  $\mathrm{P}_t(\y,\cdot)$ is the transition probability, $\mathrm{P}_{t}(\y,\Gamma)=\mathrm{P}_{t}(\chi_{\Gamma})(\y),\ \y\in\H$.
\end{definition}

By the definition of random attractors, it is clear  that there exists an invariant compact random set in $\H$. A Feller invariant probability measure for a Markov RDS $\varphi$ on $\H$ is, by definition, an invariant probability measure for the semigroup $\{\mathrm{P}_t\}_{t\geq 0}$ defined by \eqref{71}. Hence, we have the following result on the existence of invariant measures for the SCBF equations \eqref{S-CBF} defined on Poincar\'e domains in $\H$.
\begin{theorem}\label{thm6.3}
	For all the cases given in Table \ref{Table1}, there exists an invariant measure for the SCBF equations \eqref{S-CBF} in $\H$.
\end{theorem}
\subsection{Uniqueness of invariant measures}
In this work, $\W(\cdot)$ is a Wiener process with RKHS $\mathrm{K}$ satisfying Assumptions \ref{assump1} (for $r\in[1,3)$) and \ref{assump2} (for $r\geq3$). In particular, $\mathrm{K} \subset\H$ and the natural embedding  $i : \mathrm{K}\hookrightarrow \H$ is a Hilbert-Schmidt operator. For a fixed orthonormal basis $\{w_k\}_{k\in\N}$ of $\mathrm{K}$ and a sequence $\{\beta_k\}_{k\in\N}$ of independent Brownian motions defined on some filtered probability space $(\Omega, \mathscr{F}, (\mathscr{F}_t)_{t\in \R}, \mathbb{P})$ such that $\W(\cdot)$ can be written in the following form
\begin{align}\label{Sum-W}
	\W(t)=\sum_{k=1}^{\infty}\beta_k(t) w_k,  \ \ \ t\in\mathbb{R}.
\end{align}
Moreover, there exists a covariance operator $\J \in \mathfrak{L}(\H)$ associated with $\W(\cdot)$ defined by 
\begin{align*}
	\left\langle \J h_1,h_2\right\rangle=\mathbb{E}\left[\left\langle h_1,\W(1)\right\rangle_{\H}\left\langle \W(1),h_2\right\rangle_{\H}\right], \ \ \ h_1,h_2\in \H. 
\end{align*}
It is well known from \cite{DZ1} that $\J$ is a non-negative self-adjoint and trace class operator in $\H$. Furthermore, $\J = ii^*$ and $\mathrm{K} = R(\J^{\frac{1}{2}} ),$ where $R(\J^{\frac{1}{2}} )$ is the range of the operator $\J^{\frac{1}{2}}$ (see \cite{BN1}). Note that 
\begin{align*}
	\sum_{k=1}^{\infty}\|iw_k\|^2_{\H}= \text{Tr}\left[\J\right]<\infty.
\end{align*}
For $d=2$ with $r\in[1,3]$, the uniqueness of invariant measures is proved in \cite[Theorem 5.5]{KM2}. Therefore we are not repeating here. We consider here $d=2,3$ with $r\geq3$ (for $r=3$ with $2\beta\mu\geq1$) only. 
\subsubsection{Exponential estimates}
Here, we obtain some exponential estimates which is used to obtain  the uniqueness of invariant measures.
\begin{theorem}\label{UIM1}
	For $d=2,3$ with $r\geq3$ (for $r=3$ with $2\beta\mu\geq1$), let $\u_1(\cdot)$ and $\u_2(\cdot)$ be two solutions of the system \eqref{S-CBF} with the initial data $\u_1^0,\u_2^0\in\H$, respectively. Then, we have
	\begin{align}\label{62}
		\mathbb{E}\left[\|\u_1(t)-\u_2(t)\|^2_{\H}\right]\leq\begin{cases}
			\|\u_1^0-\u_2^0\|^2_{\H}\ \exp[-(\mu\lambda_1+2\alpha-2\eta)t],&\text{	for  } r>3,\\
			\|\u_1^0-\u_2^0\|^2_{\H},\ \exp[-(\mu\lambda_1+2\alpha)t],&\text{	for  } r=3 \text{	with  }2\beta\mu\geq1,
		\end{cases}
	\end{align}
	provided $\mu\lambda_1+2\alpha>2\eta$ for $r>3$, where $\eta=\frac{r-3}{2\mu(r-1)}\left(\frac{2}{\beta\mu (r-1)}\right)^{\frac{2}{r-3}}$.
\end{theorem}
\begin{proof}
	Let $\mathfrak{X}(\cdot)=\u_1(\cdot)-\u_2(\cdot)$, then $\mathfrak{X}(\cdot)$ satisfies  the following equality:
	\begin{align}\label{UN1}
		\|\mathfrak{X}(t)\|^2_{\H}&=\|\mathfrak{X}(0)\|^2_{\H}-2\mu\int_{0}^{t}\|\mathfrak{X}(\zeta)\|^2_{\V}\d\zeta-2\alpha\int_{0}^{t}\|\mathfrak{X}(\zeta)\|^2_{\H}\d\zeta\nonumber\\&\quad-2\int_{0}^{t}\left\langle\B(\u_1(\zeta))-\B(\u_2(\zeta)),\mathfrak{X}(\zeta)\right\rangle\d\zeta-2\beta\int_{0}^{t}\left\langle\mathcal{C}(\u_1(\zeta))-\mathcal{C}(\u_2(\zeta)),\mathfrak{X}(\zeta)\right\rangle\d\zeta\nonumber\\&=\|\mathfrak{X}(0)\|^2_{\H}-2\mu\int_{0}^{t}\|\mathfrak{X}(\zeta)\|^2_{\V}\d\zeta-2\alpha\int_{0}^{t}\|\mathfrak{X}(\zeta)\|^2_{\H}\d\zeta\nonumber\\&\quad-2\int_{0}^{t}b(\mathfrak{X}(\zeta),\mathfrak{X}(\zeta),\u_1(\zeta))\d\zeta-2\beta\int_{0}^{t}\left\langle\mathcal{C}(\u_1(\zeta))-\mathcal{C}(\u_2(\zeta)),\mathfrak{X}(\zeta)\right\rangle\d\zeta,
	\end{align}
	for a.e. $t\in[0,T]$,	where we have used \eqref{2.1}, \eqref{b0}-\eqref{lady}, \eqref{MO_c}, H\"older's and Young's inequalities. From \eqref{MO_c}, we obtain
\begin{align}\label{UN2}
	-2\beta\left\langle\mathcal{C}(\u_1)-\mathcal{C}(\u_2),\mathfrak{X}\right\rangle\leq-\beta \||\u_1|^{\frac{r-1}{2}}\mathfrak{X}\|^2_{\H}-\beta \||\u_2|^{\frac{r-1}{2}}\mathfrak{X}\|^2_{\H}.
\end{align}
Using H\"older's and Young's inequalities, we get (cf. \cite[Theorem 2.2]{Mohan})
\begin{align}\label{UN3}
|b(\mathfrak{X},\mathfrak{X},\u_1)|\leq\begin{cases}
		\frac{\mu}{2}\|\mathfrak{X}\|^2_{\V}+\frac{\beta}{2}\||\u_1|^{\frac{r-1}{2}}\mathfrak{X}\|^2_{\H}+\eta\|\mathfrak{X}\|^2_{\H},  &\text{	for  } r>3,\\
		\frac{\mu}{2}\|\mathfrak{X}\|^2_{\V}+\frac{1}{2\mu}\||\u_1|^{\frac{r-1}{2}}\mathfrak{X}\|^2_{\H}, &\text{	for  } r=3.
	\end{cases}
\end{align}
 where, $\eta=\frac{r-3}{2\mu(r-1)}\left(\frac{2}{\beta\mu (r-1)}\right)^{\frac{2}{r-3}}$. Combining \eqref{UN1}-\eqref{UN3}, using \eqref{2.1} and taking expectation, we find 
 \begin{align}
 		\mathbb{E}[\|\mathfrak{X}(t)\|^2_{\H}]\leq\begin{cases}
 			\|\mathfrak{X}(0)\|^2_{\H}-\int_{0}^{t}\left[(\mu\lambda_1+2\alpha)-2\eta\right]\mathbb{E}[\|\mathfrak{X}(\zeta)\|^2_{\H}]\d\zeta,  &\text{	for  } r>3,\\
 			\|\mathfrak{X}(0)\|^2_{\H}-\int_{0}^{t}(\mu\lambda_1+2\alpha)\mathbb{E}[\|\mathfrak{X}(\zeta)\|^2_{\H}]\d\zeta, &\text{	for  } r=3 \text{	with  }2\beta\mu\geq1.
 		\end{cases}
 \end{align}
Applying Gronwall's inequality, we conclude 
	\begin{align}\label{63}
		\mathbb{E}[\|\mathfrak{X}(t)\|^2_{\H}]\leq\begin{cases}
			\|\mathfrak{X}(0)\|^2_{\H}\ \text{exp}[-(\mu\lambda_1+2\alpha-2\eta)t],&\text{	for  } r>3,\\
			\|\mathfrak{X}(0)\|^2_{\H}\ \text{exp}[-(\mu\lambda_1+2\alpha)t],&\text{	for  } r=3 \text{	with  }2\beta\mu\geq1,
		\end{cases}
	\end{align}
which completes the proof.
\end{proof}
\begin{theorem}\label{UIM2}
	For $d=2,3$ with $r\geq3$ (for $r=3$ with $2\beta\mu\geq1$), let the condition given in Theorem \ref{UIM1} be satisfied and $\u_0\in\H$ be given. Then, there is a unique invariant measure for the system \eqref{S-CBF}. Moreover, the invariant measure is ergodic and strongly mixing.
\end{theorem}
\begin{proof}
	See the proof of Theorem 5.5 in \cite{Mohan}.
\end{proof}

	\begin{remark}\label{RemarkD}
	For the SCBF equations \eqref{SCBF}, the results of this work, for all the cases given in Table \ref{Table1}, can be proved in general unbounded domains or on the whole space also. The presence of Darcy's coefficient $\alpha>0$ in \eqref{SCBF} helps us to get such results.    In that case, one has to take the norm defined on $\V$ space as $\|\u\|^2_{\V} := \|\u\|^2_{\H} + \|\nabla\u\|^2_{\H}$. Since the Stokes operator $\A$ is not invertible in general unbounded domains or on the whole space, one has to make changes in Assumptions \ref{assump1} and \ref{assump2} also. Instead of $\A^{-\delta}$, one needs to take $(1+\A)^{-\delta}$. Under the above change in Assumptions \ref{assump1} and \ref{assump2} (which help us to prove Proposition \ref{SOUP1} in general unbounded domains) and with some minor changes in the calculations, the results of this work hold true in general unbounded domains and on the whole space also. 
\end{remark}

	\medskip\noindent
	{\bf Acknowledgments:}    The first author would like to thank the Council of Scientific $\&$ Industrial Research (CSIR), India for financial assistance (File No. 09/143(0938)/2019-EMR-I).  M. T. Mohan would  like to thank the Department of Science and Technology (DST), Govt of India for Innovation in Science Pursuit for Inspired Research (INSPIRE) Faculty Award (IFA17-MA110).

		\medskip\noindent
	{\bf Data availability:} 
	Data sharing not applicable to this article as no datasets were generated or analysed during the current study.

	\medskip\noindent	{\bf Deceleration:} 	The author has no competing interests to declare that are relevant to the content of this article.


\begin{thebibliography}{99}
		
		\bibitem{Abergel}
		F. Abergel, Existence and finite dimensionality of the global attractors for evolution equations on unbounded domains, \emph{J. Differential Equations}, \textbf{83}(1) (1990), 85--108.
		
		
		
		\bibitem{AO}	S.N. Antontsev and H.B. de Oliveira, The Navier–Stokes problem modified by an absorption term, \emph{Appl. Anal.}, {\bf 89}(12)  (2010), 1805--1825.
		
		
		
		
		
	\bibitem{Arnold}	L. Arnold, \emph{Random Dynamical Systems}, Springer-Verlag, Berlin, Heidelberg, New York, 1998.
		
		
		
		\bibitem{Ball} J. M. Ball, Global attractors for damped semilinear wave equations, \emph{Discrete Contin. Dyn. Syst. Ser. B}, \textbf{10} (2004), 31--52.
		
		
			\bibitem{BLW} P. Bates, K. Lu and B. Wang, Random attractors for stochastic reaction-diffusion equations on unbounded domains, \emph{J. Differential Equations}, \textbf{246} (2009), 845--869.
		
		
		
		
		
		
		
		\bibitem{BGT}	Z. Brze\'zniak, B. Goldys and Q. T. Le Gia, Random attractors for the stochastic Navier-Stokes equations on the 2D unit sphere, \emph{J. Math. Fluid Mech.}, \textbf{20} (2018), 227--253.
		
		
		
		\bibitem{BM} Z. Brze\'zniak and  E. Motyl, Existence of a martingale solution of the stochastic Navier-Stokes equations in unbounded 2D and 3D domains, \emph{J. Differential Equations}, \textbf{254}(4) (2013), 1627--1685.
		
		\bibitem{BMO}  Z. Brze\'zniak, E. Motyl and M. Ondrejat, Invariant measure for the stochastic Navier-Stokes equations in unbounded 2D domains, \emph{Ann. Probab.}, {\bf 45}(5) (2017),  3145--3201.
		
		 
		 
		\bibitem{BLL} P. Bates, H. Lisei and K. Lu, Attractors for stochastic lattice dynamical systems, \emph{Stoch. Dyn.}, \textbf{6}(1) (2006), 1--21.
		
		
		
		
	
		
		
		\bibitem{Brze} Z. Brze\'zniak, On Sobolev and Besov spaces regularity of Brownian paths, \emph{Stoch. Stoch. Rep.}, \textbf{56}(1–2) (1996), 1--15.
		
		
		
		\bibitem{Brze1} Z. Brze\'zniak, Stochastic convolution in Banach spaces, \emph{Stoch. Stoch. Rep.}, \textbf{61} (1997), 245--295.
		
		
		\bibitem{Brze2} Z. Brze\'zniak, Stochastic partial differential equations in M-type 2 Banach spaces, \emph{Potential Anal.}, \textbf{4} (1995), 1--45.
		
		
		\bibitem{BCF}  Z. Brze\'zniak, M. Capi\'nski and F. Flandoli, Pathwise global attractors for stationary random dynamical systems, \emph{Probab. Theory Related Fields}, \textbf{95} (1993), 87--102.
		
		
		
		\bibitem{BCLLLR} Z. Brz\'ezniak, T. Caraballo, J. A. Langa, Y. Li, G. Lukaszewicz and J. Real, Random attractors for stochastic 2D Navier-Stokes equations in some unbounded domains, \emph{J. Differential Equations}, \textbf{255} (2013), 3897--3919.
		
		
		
		
		
		
		\bibitem{BH} 	Z. Brz\'ezniak and H. Long, A note on $\gamma$-radonifying and summing operators, \emph{Stochastic Analysis, Banach center for publications,} \textbf{105} (2015), 43--57.
	
		\bibitem{BL}  Z. Brz\'ezniak and Y. Li, Asymptotic compactness and absorbing sets for 2D stochastic Navier-Stokes equations in some unbounded domains, \emph{Trans. Amer. Math. Soc.}, \textbf{358}(12) (2006) 5587--5629.
		
		
		\bibitem{BP} Z. Brz\'ezniak and S. Peszat, Stochastic two dimensional Euler equations, \emph{Ann. Probab.}, {\bf 29}(4) (2001), 1796--1832.
		
		
		\bibitem{BN} Z. Brzeźniak and J. van Neerven, Space-time regularity for linear stochastic evolution equations driven by spatially homogeneous noise, \emph{J. Math. Kyoto Univ.}, \textbf{43}(2) (2003), 261--303. 
		
		\bibitem{BN1}    Z. Brze\'zniak, J. van Neerven, Stochastic convolution in separable Banach spaces and the stochastic linear Cauchy problem, \emph{Studia Math.}, \textbf{143} (2000), 43–74.
		
		
		
		
		\bibitem{CLR1}
		T. Caraballo, G. Lukaszewicz and J. Real, Pullback attractors for asymptotically compact non-autonomous dynamical systems, \emph{Nonlinear Analysis}, \textbf{64}(3) (2006), 484--498.
		
		
		
		\bibitem{CLR2}
		T. Caraballo, G. Lukaszewicz and J. Real, Pullback attractors for non-autonomous 2D-Navier-Stokes equations in some unbounded domains, \emph{C. R. Math. Acad. Sci. Paris}, \textbf{342}(4) (2006), 263--268.
		
		
		
		
		\bibitem{CV} V. V. Chepyzhov and M. I. Vishik, \emph{Attractors for Equations of Mathematical Physics}, American Mathematical Society, Providence, Rhode Island, 2002.
		
		
		
		\bibitem{PCAM} P. Cherier and  A. Milani,  \emph{Linear and Quasi-linear Evolution Equations	in Hilbert Spaces}, American Mathematical Society Providence,Rhode Island, 2012. 
		
		
		
		\bibitem{PGC} 	P. G. Ciarlet, \emph{Linear and Nonlinear Functional Analysis with Applications}, SIAM Philadelphia, 2013.
		
		
		
		\bibitem{CF} H. Crauel and F. Flandoli, Attractors for random dynamical systems, \emph{Probab. Theory Related Fields}, \textbf{100} (1994), 365--393.
		
		
		
		
		
		
		\bibitem{Crauel}	H. Crauel, Random Probability Measures on Polish Spaces, \emph{Stochastics Monographs}, vol. 11, Taylor \& Francis, London, 2002.
		
		
		
		\bibitem{CDF}	H. Crauel, A. Debussche and F. Flandoli, Random attractors, \emph{J. Dynam. Differential Equations}, \textbf{9}(2) (1995), 307--341.
		
		
		
		\bibitem{DZ}	G. Da Prato and J. Zabczyk, \emph{Ergodicity for Infinite Dimensional Systems}, London Mathematical Society Lecture Note Series, vol. 229, Cambridge University Press, Cambridge, 1996.
		
		
		
			\bibitem{DZ1}	 G. Da Prato and J. Zabczyk, \emph{Stochastic Equations in Infinite Dimensions}, 2nd edition, Cambridge Univ. Press, Cambridge, 2014.
		
	
		
		\bibitem{LCE}	L. C. Evans, \emph{Partial Differential Equations}, Grad. Stud. Math., vol. 19, Second Edition, Amer. Math. Soc., Providence, RI, 2010.
		
		
		\bibitem{FHR} 	C. L. Fefferman, K. W. Hajduk and J. C. Robinson, Simultaneous approximation in Lebesgue and Sobolev norms via eigenspaces, \emph{Proc. London Math. Soc.}, \textbf{3} (2022), 1-19. 
		
		
		
		
		
		
		\bibitem{FY}	X. Feng and B. You, Random attractors for the two-dimensional stochastic g-Navier-Stokes equations, \emph{Stochastics}, \textbf{92}(4) (2020), 613--626.
		
		
		
		\bibitem{FMRT}	\newblock C. Foias, O. Manley, R. Rosa and R. Temam, \newblock \emph{Navier-Stokes Equations and Turbulence}, \newblock Cambridge University Press, 2008.
		
		
		
		
		
		
		\bibitem{GPG} 	G. P.  Galdi, An introduction to the Navier–Stokes initial-boundary value problem. In \emph{Fundamental directions in mathematical fluid mechanics}, Adv. Math. Fluid Mech. Birkh\"auser, Basel, 2000, pp. 1--70.
		
		
		
		
		\bibitem{GLS}	B. Gess, W. Liu and A. Schenke, Random attractors for locally monotone stochastic partial differential equations, \emph{J. Differential Equations}, \textbf{269} (2020), 3414--3455.
		
		
		
		
		\bibitem{Ghidaglia}		J. M. Ghidaglia, A note on the strong convergence towards attractors of damped forced KdV equations, \emph{J. Differential Equations}, \textbf{110}(2) (1994), 356--359.
		
		
		
		\bibitem{HR}	K. W. Hajduk and J. C. Robinson, Energy equality for the 3D critical convective Brinkman-Forchheimer equations, \emph{J. Differential Equations}, {\bf 263} (2017), 7141--7161.
		
		
		
		\bibitem{HR1}	K. W. Hajduk, J. C. Robinson and W. Sadowski,	Robustness of regularity for the 3D convective Brinkman--Forchheimer equations, \emph{J. Math. Anal. Appl.}, \textbf{500}(1) (2021), 125058.
		
		
		\bibitem{HZ} Z. Han and S. Zhou, Random exponential attractor for the 3D non-autonomous stochastic damped Navier-Stokes equation, \emph{J. Dynam. Differential Equations}, (2021), \url{https://doi.org/10.1007/s10884-021-09951-x}. 
		
		
		
		\bibitem{Heywood} J. G. Heywood, The Navier-Stokes Equations: on the existence, regularity and decay of solutions, \emph{Ind. Univ. Math. J.}, \textbf{29} (1980), 639--681.
		
		
		
		\bibitem{HW} K. Holly and M. Wiciak, Compactness method applied to an abstract nonlinear parabolic equation, in: Selected Problems of Mathematics, Cracow University of Technology, 1995, pp. 95--160.
		
		
		
		
		
		
		\bibitem{KZ} 	V. K. Kalantarov and S. Zelik, Smooth attractors for the Brinkman-Forchheimer equations with fast growing nonlinearities, \emph{Commun. Pure Appl. Anal.}, {\bf 11}	(2012), 2037--2054.
		
		
		\bibitem{KM1} K. Kinra and M. T. Mohan, Existence and upper semicontinuity of random attractors for the 2D stochastic convective Brinkman-Forchheimer equations in bounded domains, Accepted in \emph{Stochastics}, (2022), \url{https://arxiv.org/pdf/2011.06206.pdf}.
		 
		 
		 \bibitem{KM2} K. Kinra and M. T. Mohan, $\H^1$-Random attractors for 2D stochastic convective Brinkman-Forchheimer equations in unbounded domains, Accepted in \emph{Adv. Differential Equations}, (2022), \url{https://arxiv.org/pdf/2111.07841.pdf}.  
		 
		 
		
		\bibitem{KM3} K. Kinra and M. T. Mohan, Large time behavior of the deterministic and stochastic 3D convective Brinkman-Forchheimer equations in periodic domains, \emph{J. Dynam. Differential Equations}, (2021), pp. 1--42.
		
		
		\bibitem{KM6} K. Kinra and M. T. Mohan, Existence and upper semicontinuity of random pullback attractors for 2D and 3D non-autonomous stochastic convective Brinkman-Forchheimer equations on whole space, Accepted in \emph{Differential Integral Equations}, (2022), \url{https://arxiv.org/pdf/2105.13770.pdf}. 
		
		
		
		\bibitem{KM7} K. Kinra and M. T. Mohan, Long term behavior of 2D and 3D non-autonomous random convective Brinkman-Forchheimer equations driven by colored noise, \emph{Submitted}, \url{https://arxiv.org/pdf/2107.08890.pdf}.
		
	\bibitem{KM8}	K. Kinra and M. T. Mohan, Bi-spatial random attractor, ergodicity and a random Liouville type theorem for stochastic Navier-Stokes equations on the whole space, \emph{Submitted}, \url{https://arxiv.org/pdf/2209.08915.pdf}.
		
		
		\bibitem{PEK}	P. E. Kloeden and M.  Rasmussen, \emph{Nonautonomous dynamical systems}, Mathematical Surveys and Monographs, 176, American Mathematical Society, Providence, RI, 2011. 
		
		\bibitem{OAL}	O. A. Ladyzhenskaya, \emph{The Mathematical Theory of Viscous Incompressible Flow}, Gordon and Breach, New York, 1969.
		
		
		\bibitem{LG}
		H. Liu and H. Gao, Ergodicity and dynamics for the stochastic 3D Navier-Stokes equations with damping, \emph{Commun. Math. Sci.}, \textbf{16}(1) (2018), 97--122.
		
		
		\bibitem{LL} F. Li and Y. Li, Asymptotic behavior of stochastic g-Navier-Stokes equations on a sequence of expanding domains, \emph{J. Math. Phys.}, \textbf{60} (2019), 061505.
		
		
		\bibitem{LYZ} Y. Li, S. Yang and Q. Zhang, Odd random attractors for stochastic non-autonomous Kuramoto-Sivashinsky equations without dissipation, \emph{Electron. Res. Arch.}, 28 (2020), 1529--1544.
		\bibitem{LXS}	F. Li, D. Xu and L. She, Large-domain stability of random attractors for stochastic g-Navier–Stokes equations with additive noise, \emph{J. Inequal. Appl.}, \textbf{193} (2020), Paper No. 193, 24 pp. 
		\bibitem{Mohan1}	 M. T. Mohan, On the convective Brinkman-Forchheimer equations, \emph{Submitted}.
		\bibitem{Mohan}	 M. T. Mohan, Stochastic convective Brinkman-Forchheimer equations, \emph{Submitted}, \url{https://arxiv.org/abs/2007.09376}.
		\bibitem{Mohan2}	M. T. Mohan, Asymptotic analysis of the 2D convective Brinkman-Forchheimer equations in unbounded domains: Global attractors and upper semicontinuity, \emph{Submitted}, \url{https://arxiv.org/abs/2010.12814}. 
		\bibitem{MTM2} M. T. Mohan, The $\H^1$-compact global attractor for the two dimensional convective Brinkman-Forchheimer equations in unbounded domains, \emph{J. Dyn. Control Syst.}, \textbf{28} (2021), 791–816.
		\bibitem{Ondrejat}		M. Ondrej\'at,  Uniqueness for stochastic evolution equations in Banach spaces, \emph{Dissertationes Mathematics (Rozprawy Mat.)}, \textbf{426} (2004), 63pp.
		
		
		
		\bibitem{Robinson2} J. C. Robinson, \emph{Infinite-Dimensional Dynamical Systems, An Introduction to Dissipative Parabolic PDEs and the Theory of Global Attractors}, Cambridge Texts in Applied Mathematics, 2001.
		
		
		
		\bibitem{Rosa}		R. Rosa, The global attractor for the 2D Navier-Stokes flow on some unbounded domains, \emph{Nonlinear Analysis}, \textbf{32} (1998), 71--85.
		
		
		
		\bibitem{Slavik}	J. Slav\'ik, Attractors for stochastic reaction-diffusion equation with additive homogeneous noise, \emph{Czechoslovak Math. J.}, \textbf{71}(146) (2021), 21--43. 
		
		
		
		\bibitem{SLHZ}	J. Shu, H. Li, X. Huang and J. Zhang, Asymptotic behaviour of stochastic heat equations in materials with memory on thin domains, \emph{Dyn. Syst.}, \textbf{35} (2020), 704–728.
		
		\bibitem{R.Temam}	R. Temam, \emph{Infinite-Dimensional Dynamical Systems in Mechanics and Physics,}  \textbf{68}, Applied Mathematical Sciences, Springer, 1988.
		
		
		\bibitem{Temam}	R. Temam, \emph{Navier-Stokes Equations, Theory and Numerical Analysis}, North-Holland, Amsterdam, 1977.
		
		
		\bibitem{Wang} B. Wang, Random attractors for the stochastic Benjamin–Bona–Mahony equation on unbounded domains, \emph{J. Differential Equations}, \textbf{246}(6) (2008), 2506--2537.
		
		
		\bibitem{You}		B. You, The existence of a random attractor for the three dimensional damped Navier-Stokes equations with additive noise, \emph{Stoch. Anal. Appl.}, \textbf{35}(4) (2017), 691--700.
		
		
	\end{thebibliography}
\end{document}